\renewcommand{\mathbf}{\mathbold}
\numberwithin{equation}{section}
\newtheorem{Theorem}[equation]{Theorem}
\newtheorem{Proposition}[equation]{Proposition}
\newtheorem{Lemma}[equation]{Lemma}
\newtheorem{Conjecture}[equation]{Conjecture}
\newtheorem{Question}[equation]{Question}
\newtheorem{Claim}[equation]{Claim}
\theoremstyle{definition}
\newtheorem{Remark}[equation]{Remark}
\newtheorem{Example}[equation]{Example}
\newtheorem{eg}[equation]{Example}
\newcommand{\cF}{\mathcal{F}}
\newcommand{\cH}{\mathcal{H}}
\newcommand{\cK}{\mathcal{K}}
\newcommand{\cN}{\mathcal{N}}
\newcommand{\cS}{\mathcal{S}}
\newcommand{\cT}{\mathcal{T}}
\newcommand{\ZZ}{\mathbb{Z}}
\renewcommand{\phi}{\varphi}
\renewcommand{\emptyset}{\varnothing}
\newcommand{\eps}{\varepsilon}
\renewcommand{\tilde}[1]{\widetilde{#1}}
\def\Ddots{\mathinner{\mkern1mu\raise\p@
\vbox{\kern7\p@\hbox{.}}\mkern2mu
\raise4\p@\hbox{.}\mkern2mu\raise7\p@\hbox{.}\mkern1mu}}
\DeclareMathOperator{\sgn}{sgn}
\newcommand{\Waff}{{W_{\text{aff}}}}
\newcommand{\Inv}{\mathrm{Inv}}
\newcommand{\twomat}[4]{\begin{bmatrix}
        #1 & #2  \\ 
        #3 & #4 
     \end{bmatrix}}
\newcommand{\suchthat}{\mid}
\newcommand{\textif}{\text{ if }}
\newcommand{\textand}{\text{ }\mathrm{and}\text{ }}
\newcommand{\Titscone}{\cT}
\newcommand{\WTits}{W_{\Titscone}}
\newcommand{\id}{\mathrm{id}}
\newcommand{\triv}{\mathrm{triv}}
\newcommand{\sign}{\mathrm{sign}}
\newcommand{\afD}{\widetilde{\Delta }}
\newcommand{\afDp}{\widetilde{\Delta }^+}
\newcommand{\innprod}[2]{\langle #1, #2 \rangle}
\newcommand{\Lo}{{\Lambda _0}}
\newcommand{\Th}[1]{{\Theta_{#1}}}
\begin{document}

\title{Pursuing Coxeter theory for Kac-Moody affine Hecke algebras}
\author{Dinakar Muthiah}
\author{Anna Pusk\'as}
\maketitle

\newcommand{\aff}{\mathrm{aff}}

\begin{abstract}
  The Kac-Moody affine Hecke algebra $\cH$ was first constructed as the Iwahori-Hecke algebra of a $p$-adic Kac-Moody group by work of Braverman, Kazhdan, and Patnaik, and by work of Bardy-Panse, Gaussent, and Rousseau. Since $\cH$ has a Bernstein presentation, for affine types it is a positive-level variation of Cherednik's double affine Hecke algebra.

  Moreover, as $\cH$ is realized as a convolution algebra, it has an additional ``$T$-basis'' corresponding to indicator functions of double cosets. For classical affine Hecke algebras, this $T$-basis reflects the Coxeter group structure of the affine Weyl group. In the Kac-Moody affine context, the indexing set $W_{\cT}$ for the $T$-basis is no longer a Coxeter group. Nonetheless, $W_{\cT}$ carries some Coxeter-like structures: a Bruhat order, a length function, and a notion of inversion sets.

This paper contains the first steps toward a Coxeter theory for Kac-Moody affine Hecke algebras. We prove three results. The first is a construction of the length function via a representation of $\cH$. The second concerns the support of products in classical affine Hecke algebras. The third is a characterization of length deficits in the Kac-Moody affine setting via inversion sets. Using this characterization, we phrase our support theorem as a precise conjecture for Kac-Moody affine Hecke algebras. Lastly, we give a conjectural definition of a Kac-Moody affine Demazure product via the $q=0$ specialization of $\cH$.

\end{abstract}

\section{Introduction}
\subsection{Affine Weyl groups and Hecke algebras} 

We can think about an affine Weyl group $\Waff$ in two different ways. One the one hand, it is a Coxeter group associated to an affine root datum with notions of reduced expressions, length function, and Bruhat order. On the other hand, viewing the affine root datum as the affinization of a finite root datum, we may view $\Waff$ as the semi-direct product of a finite Weyl group and a lattice. Corresponding to these points of view, the affine Hecke algebra $\cH_{\Waff}$ has two presentations. 

Viewing $\Waff$ as a Coxeter group, one has the Iwahori-Matsumoto presentation and the $T$-basis $\{T_w\}_{w \in \Waff}$ of $\cH_\Waff$ given by taking products of reduced expressions of generators. The generators satisfy  quadratic relations \eqref{eq:T-identities} and additionally braid relations, which can be encoded via the following length additivity formula:
\begin{equation}
  \label{eq:length-additive-intro}
T_x T_y = T_{xy}\text{ if }\ell(x)+\ell(y)=\ell(xy)\text{ for }x,y\in \Waff
\end{equation}
Here $\ell : \Waff \rightarrow \ZZ_{\geq 0}$ is the length function.

The $T$-basis also reflects the realization of $\cH_\Waff$ as the Iwahori-Hecke algebra of reductive group over a non-archimedean local field \cite{iwahori1965some} (for brevity, we will refer to such a group as a $p$-adic group). In particular, the $T$-basis is equal to the basis consisting of indicator functions of Iwahori double cosets and the length function on $\Waff$ exactly encodes the Haar measure of the double cosets. 

Viewing $\Waff$ as the semi-direct product of a finite Weyl group and a lattice, we have the Bernstein presentation of $\cH_\Waff.$ The algebra $\cH_\Waff$ is generated by the finite Hecke algebra and the group algebra of the lattice. The interaction of these two subalgebras is controlled by the Bernstein relation \eqref{eq:Bernstein}.

\subsection{The Kac-Moody affine setting}

In this paper we study the \emph{Kac-Moody affine Hecke algebra} $\cH$, which arises from affinizing a Kac-Moody root datum. The algebra $\cH$ was first constructed as the Iwahori Hecke algebra for a $p$-adic Kac-Moody group (by Braverman, Kazhdan, and Patnaik in the untwisted affine case \cite{Braverman-Kazhdan-Patnaik}, and later in full generality by Bardy-Panse, Gaussent, and Rousseau \cite{BardyPanse-Gaussent-Rousseau-2016}). 

The algebra $\cH$ has a Bernstein presentation: $\cH$ is generated by a copy of the Hecke algebra $\cH_W$ of the Weyl group $W$ of the Kac-Moody root system and the semi-group algebra of the Tits cone $\cT$ inside the coweight lattice. In the case of affine root systems, the Bernstein presentation is a slight variation of the usual presentation of the double affine Hecke algebra \cite[\S 1.2.4]{Braverman-Kazhdan-Patnaik}.

Because of its $p$-adic realization, $\cH$ comes equipped with a $T$-basis $\{T_x\}_{x \in W \ltimes \Titscone}$. Here the indexing set is the semi-direct product $W \ltimes \cT$. This is not a Coxeter group, and is fact not even a group. Accordingly, there is no analogue of the Iwahori-Matsumoto presentation for $\cH$, and multiplying elements of the $T$-basis is a convoluted affair. We explain the precise algorithm in \S \ref{sssect:mult-in-T-basis}. Briefly, $T$-basis elements are expanded in Bernstein generators, the Bernstein generators are multiplied using the Bernstein relation, and the result is then expanded back into the $T$-basis. By contrast, multiplication of $T$-basis elements in the affine Hecke algebra $\cH_{\Waff}$ is straightforward: one simply uses the Coxeter presentation of $\Waff,$ without any need to involve the Bernstein basis. 

However, there is strong indication that a substitute for Coxeter theory should exist in this Kac-Moody affine context. For example, in recent years, we have found that $W \ltimes \cT$ carries many structures in common with Coxeter groups, such as a Bruhat order and a compatible length function \cite{Braverman-Kazhdan-Patnaik,Muthiah-2018,Muthiah-Orr-2019}. These constructions were inspired by Kac-Moody affine Hecke algebras, but until this present paper no precise connection was made between the two.

In this paper we aim to link these structures with $\cH$, and our results are the first steps toward the Coxeter theory of the Kac-Moody affine Hecke algebra. They connect the length function and the Bruhat order with $\cH$ and its $T$-basis. The results and conjectures are outlined in the remainder of this Introduction. For more precise statements about the context, see Section \ref{sect:prelim}, which contains a summary of the necessary background on Kac-Moody root systems and their affinization, the corresponding Weyl group $W$ and Tits cone $\cT$, the Bruhat order and length function on $W\ltimes \cT$, and the Kac-Moody affine Hecke algebra $\cH$ and its $T$-basis and Bernstein presentation.

\subsection{Length function and trivial/sign representation}

Our first result makes a precise connection between the length function on the semigroup $W\ltimes \cT$ and the Kac-Moody affine Hecke algebra $\cH$. Let $q$ denote the parameter for $\cH$ (see \S \ref{subsect:KMAHA}). In the realization of $\cH$ as the Iwahori-Hecke algebra of a $p$-adic Kac-Moody group, it is the order of the residue field. 
\begin{Theorem}[Theorems 
  \ref{thm:triv-representation} and 
  \ref{thm:sign-representation} in the main text]\label{introthm:repstrivialandsign}
  Let $\cH$ be a Kac-Moody affine Hecke algebra with $T$-basis indexed by $W \ltimes \cT$ where $W$ is the Weyl group and $\cT$ is the Tits cone in the lattice of coweights. Then there are ``trivial'' and ``sign'' representations 
  \begin{align}
    \triv:  \cH \rightarrow \ZZ[q,q^{-1}] \\
    \sign:  \cH \rightarrow \ZZ[q, q^{-1}]
  \end{align}
  characterized by
  \begin{align}
    \label{eq:9}
    \triv(T_x) = q^{\ell(x)} \\
    \sign(T_x) = (-1)^{\ell(x)} 
  \end{align}
 for all $x \in W \ltimes \cT$, where $\ell(x)$ is the length of $x$.
\end{Theorem}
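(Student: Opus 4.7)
The plan is to define $\triv$ and $\sign$ on the Bernstein generators of $\cH$, verify the defining relations, and then compute their values on general $T$-basis elements.

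Since $\cH$ is generated by the finite Hecke subalgebra $\cH_W$ and the semigroup subalgebra $\{\Theta_\lambda\}_{\lambda \in \cT}$ subject to the Bernstein relations, I set $\triv(T_s) = q$ and $\sign(T_s) = -1$ for each simple reflection $s \in W$, and $\triv(\Theta_\lambda) = q^{a(\lambda)}$, $\sign(\Theta_\lambda) = (-1)^{a(\lambda)}$, where $a : \cT \to \ZZ$ is the additive invariant chosen so that the Bernstein-to-$T$-basis conversion yields $\triv(T_{t_\lambda}) = q^{\ell(t_\lambda)}$ and $\sign(T_{t_\lambda}) = (-1)^{\ell(t_\lambda)}$ on pure translations. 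The quadratic relation is immediate: $q^2 = (q-1)q + q$ and $(-1)^2 = -(q-1) + q$. Braid relations are automatic since the images are scalars, and the semigroup relations $\Theta_\lambda \Theta_\mu = \Theta_{\lambda+\mu}$ reduce to additivity of $a$.

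The substantive check is the Bernstein relation, which schematically takes the form of a $W$-antisymmetric combination in the $\Theta$'s (cleared of denominators). Because the length function is $W$-invariant on translations in $\cT$, we have $\triv(\Theta_\lambda) = \triv(\Theta_{s\lambda})$ and $\sign(\Theta_\lambda) = \sign(\Theta_{s\lambda})$ for every simple reflection $s$, so both sides of the Bernstein relation collapse to zero under either map. This establishes that $\triv$ and $\sign$ are well-defined algebra homomorphisms. Having confirmed this, I evaluate them on $T_x$ for $x = w \cdot \lambda \in W \ltimes \cT$ by rewriting $T_x$ in Bernstein form as a $q$-power multiple of $T_w \Theta_\lambda$, applying the homomorphism, and invoking length additivity $\ell(w \cdot \lambda) = \ell(w) + \ell(t_\lambda)$ for $\lambda$ suitably chosen in $\cT$.

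The main obstacle will lie in this last step: ensuring the $q$-power normalization of the Bernstein-to-$T$-basis conversion aligns exactly with the length function $\ell$ on $W \ltimes \cT$ as defined in \cite{Braverman-Kazhdan-Patnaik, Muthiah-Orr-2019}. This requires verifying $a(\lambda) = \ell(t_\lambda)$, $W$-invariance of translation lengths (also needed for the Bernstein-relation check above), and length additivity across Bernstein decompositions. In the classical affine setting these facts are standard, but in the Kac-Moody affine setting --- where $\cT$ is only a semigroup and no Coxeter-style reduced expressions are available --- they rely essentially on the structural properties of $\ell$ and of the Bruhat order developed in the recent literature, and this is where I expect the bulk of the technical work to lie.
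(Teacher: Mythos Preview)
Your overall strategy---define the maps on Bernstein generators, check the relations, then evaluate on $T$-basis elements---matches the paper's. But there are two genuine errors in the execution.

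\textbf{The Bernstein relation check is wrong.} You want $a:\cT\to\ZZ$ to be additive (so that $\Theta^{\mu_1}\Theta^{\mu_2}=\Theta^{\mu_1+\mu_2}$ is respected) and to satisfy $a(\lambda)=\langle\rho,\lambda\rangle$ for dominant $\lambda$ (to match $\ell(\pi^\lambda)=2\langle\rho,\lambda\rangle$ via $T_{\pi^\lambda}=q^{\langle\rho,\lambda\rangle}\Theta^\lambda$). The only such additive function is $a(\mu)=\langle\rho,\mu\rangle$, and this is \emph{not} $W$-invariant: $\langle\rho,s_i(\mu)\rangle=\langle\rho,\mu\rangle-\langle\alpha_i,\mu\rangle$. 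So your claim that ``$\triv(\Theta_\lambda)=\triv(\Theta_{s\lambda})$, hence both sides of the Bernstein relation collapse to zero'' is false. You are conflating $\triv(\Theta^\mu)=q^{\langle\rho,\mu\rangle}$ with $\triv(T_{\pi^\mu})=q^{\ell(\pi^\mu)}$; the latter is $W$-invariant, the former is not. The correct verification shows that with $\triv(\Theta^{-\alpha_i})=q^{-1}$, both sides of the Bernstein relation equal the common nonzero value $q^{\langle\rho,\mu\rangle+1}-q^{\langle\rho,\mu\rangle-\langle\alpha_i,\mu\rangle+1}$.

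\textbf{The evaluation on $T_x$ is too loose.} It is not true that $T_x$ is a $q$-power multiple of $T_w\Theta_\lambda$, nor that one can always arrange $\ell(w\cdot\lambda)=\ell(w)+\ell(t_\lambda)$. The paper instead writes $x=\pi^{w(\lambda)}v$ with $\lambda$ dominant and uses the explicit formula
\[
T_x = q^{\langle\rho,\lambda\rangle}\,T_{w^{-1}}^{-1}\,\Theta^\lambda\,T_{w^{-1}}\,T_{i_1}^{\eps_1}\cdots T_{i_r}^{\eps_r},
\]
where $v=s_{i_1}\cdots s_{i_r}$ and each $\eps_k\in\{\pm1\}$ is determined by a root-positivity condition. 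Applying $\triv$ gives $q^{2\langle\rho,\lambda\rangle+\eps_1+\cdots+\eps_r}$, and the identification with $q^{\ell(x)}$ rests on the fact (from \cite{Muthiah-2018}) that these signs $\eps_k$ exactly record the length increments $\ell(\pi^{w(\lambda)}s_{i_1}\cdots s_{i_k})-\ell(\pi^{w(\lambda)}s_{i_1}\cdots s_{i_{k-1}})$. This replaces the unavailable ``length additivity'' you invoke.
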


The theorem and its proof form the contents of Section \ref{sect:trivial-and-sign}. There it is stated in another way: we first define these representations using Bernstein generators, and upon computing the images of the $T$-basis elements we {\it discover} the length function. We also mention that Theorem \ref{introthm:repstrivialandsign} above is precise only when the length function takes values in $\ZZ$ (which holds in finite and affine cases). To phrase the general statement precisely, we will need to allow denominators and correspondingly need to adjoint roots of $q$ and $-1.$

\subsection{Length deficits and products in the Hecke algebra}\label{subsect:intro-lengthdefandstructurecoeff}

The identity \eqref{eq:length-additive-intro} shows how products of elements in the $T$-basis behave for length-additive pairs $x,y$ in a Coxeter Weyl group. If $\ell(xy)<\ell(x)+\ell(y)$ (i.e. if $x,y$ are a length deficient pair) then the product is more complicated. To describe it, we need the following formula for length deficits.
\begin{Theorem}[Theorems  
    \ref{thm:finiteness-of-invx-intersect-invyinverse} and
    \ref{thm:length-deficit-for-tits-cone-weyl-groups} in the main text]
    \label{thm:intro-length-deficits}
  Fix a Kac-Moody root datum. Let $x,y \in W \ltimes \cT$. Then $\Inv(x) \cap \Inv(y^{-1})$ is a finite set, and \begin{equation}
    \label{eq:1}
    \ell(x) + \ell(y) = \ell(xy) + |\Inv(x) \cap \Inv(y^{-1})|
  \end{equation}
\end{Theorem}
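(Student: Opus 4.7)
The plan is to prove the two assertions in sequence. For finiteness of $\Inv(x) \cap \Inv(y^{-1})$, I would work from the explicit description of inversion sets for elements of $W \ltimes \cT$ given in Section \ref{sect:prelim}. Each of $\Inv(x)$ and $\Inv(y^{-1})$ consists of positive real affine roots cut out by a sign condition depending on the respective element, and while either set can be infinite in the Kac-Moody setting (in contrast to the Coxeter case), writing $x = w t_\lambda$ and $y = v t_\mu$ the two conditions together bound a candidate root $\beta$ via pairings with the coweights $\lambda, \mu$ in the Tits cone. Only finitely many roots can satisfy both bounds simultaneously.

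For the length deficit formula, the strategy is induction on $\ell(y)$. The base case $y = 1$ is immediate since $\Inv(1^{-1}) = \emptyset$ and $\ell(x \cdot 1) = \ell(x)$. For the inductive step, I would factor $y = y' g$ with $\ell(y) = \ell(y') + \ell(g)$ and $g$ a simple generator (a simple reflection in $W$ or a fundamental translation in $\cT$); the existence and well-behavedness of such factorizations is an auxiliary lemma about the structure of $W \ltimes \cT$ that should follow from the Tits-cone machinery developed in the preliminaries. Applying the inductive hypothesis to the pair $(x, y')$ and analyzing how multiplication by $g$ alters both $\ell(x y')$ and the intersection $\Inv(x) \cap \Inv(y^{-1})$ will deliver the formula for $(x, y)$.

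The technical heart of the argument will be a recursive identity describing $\Inv(xy)$ in terms of $\Inv(x)$ and $\Inv(y)$, in the spirit of the classical Coxeter symmetric-difference formula $\Inv(xy) = \Inv(y) \mathbin{\triangle} y^{-1}\Inv(x)$, but adapted to the paper's conventions for $\Inv$ in the Kac-Moody affine setting so that the length-deficit coefficient is $1$ rather than the Coxeter-theoretic $2$. Once this identity is in place, the induction closes by standard bookkeeping: take cardinalities, apply the inductive hypothesis, and isolate the contribution of $g$.

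The main obstacle, I anticipate, lies in the case when $g$ in the inductive factorization is a translation $t_\mu$ rather than a simple reflection. Unlike a simple reflection, $t_\mu$ has an inversion set comprising entire ``$\delta$-strings'' of affine roots, so the analysis of how $\Inv(y' t_\mu)$ intersects $\Inv(x)$ is qualitatively different from a single-root swap and requires a more delicate computation. In addition, the finiteness assertion, automatic in Coxeter groups, must be re-established using the geometry of the Tits cone; I expect the explicit bounds produced in that step to also drive the bookkeeping in the translation case of the induction, tying the two parts of the theorem together.
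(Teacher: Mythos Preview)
Your finiteness argument is essentially the paper's: decompose each inversion set via the explicit formula (equations \eqref{eq:d:14} and \eqref{eq:d:17}), split into a finite piece plus an infinite tail indexed by sign conditions on pairings with the Tits-cone coweight, and observe that the two tails are disjoint.

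The length-deficit argument, however, has a structural gap. You propose inducting on $\ell(y)$ by factoring $y = y'g$ with $g$ a ``simple generator'' (simple reflection or fundamental translation) and $\ell(y) = \ell(y') + \ell(g)$. But $W_\cT$ is not a Coxeter group and has no such generating set: translations $\pi^\mu$ have length $2\langle \rho, \mu_+\rangle$ (generally not $1$, and not even an integer), and there is no reduced-word theory guaranteeing that every element factors length-additively through a fixed generating set. The ``auxiliary lemma about factorizations'' you hope will follow from the Tits-cone machinery does not appear in the preliminaries and is not known; indeed, the absence of such a theory is precisely what makes the Kac-Moody affine case hard.

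The paper circumvents this by reorganizing the induction entirely. Writing $x = \pi^\lambda w_x$ and $y = w_y \pi^\mu w$ with $\lambda,\mu$ in a common Weyl chamber, it inducts on the \emph{Coxeter} length $\ell(w_x) + \ell(w_y)$, so every induction step is multiplication by a genuine simple reflection $s \in W$. Translations are absorbed entirely into the base case (Proposition \ref{prop:startcase-samechamber-onetransl}), which is checked directly from the length formula \eqref{eq:def-length-doubleaffine}. The induction step then compares $(x,y)$ with $(xs,y)$ or $(x,sy)$, and the change in $|\Inv(x)\cap\Inv(y^{-1})|$ is matched against the change in $\ell(xy)$ via the sets $\Inv^{++}_{xy}(y^{-1}sy)$ of Muthiah--Orr (equation \eqref{eq:MOthm-3.14}), which compute length jumps under arbitrary reflections. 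This is the key external input you are missing.

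A second gap: your plan to ``take cardinalities'' of a symmetric-difference identity fails because the sets involved are typically infinite. The bijection $F$ of \eqref{eq:d:3} does extend verbatim to the Kac-Moody affine setting, but one cannot simply count. The paper's Proposition \ref{prop:thekeyprop} handles this by chaining together bijections $G_{x,y}$, $H$, $G_{xs,y}^{-1}$, $K$ between infinite sets, then proving the composite restricts to a self-bijection of a cofinite subset $\cS \subseteq \Inv((xy)^{-1})$, so that the actual counting takes place only on the finite complement.
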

Here $\Inv(x)$ and $\Inv(y^{-1})$ are inversions sets, i.e. the sets of positive Kac-Moody affine roots made negative by the corresponding element. This statement is well-known in the case of Coxeter Weyl groups with an easy proof. However, the proof in the Kac-Moody case is much more subtle since inversion sets are generally infinite. Section \ref{sect:KMaffine-lengtdef} is dedicated to proving first that the intersection on the right-hand side of \eqref{eq:1} is finite (Theorem \ref{thm:finiteness-of-invx-intersect-invyinverse}), then the identity \eqref{eq:1} (Theorem \ref{thm:length-deficit-for-tits-cone-weyl-groups}). 

Using this characterization of length deficits in terms of inversion sets, in section 4 we prove the following theorem for Coxeter Weyl groups.
\begin{Theorem}[Theorem \ref{thm:expansion-properties-Coxeter} in the main text]
  \label{thm:intro-support-of-product}
  
  Let $W$ be the Weyl group of a Kac-Moody root system, and let $\cH_W$ be the corresponding Hecke algebra. Let $x,y\in W$, and let $N = |\Inv(x) \cap \Inv(y^{-1})|$.
  When we expand 
	\begin{equation}
          \label{eq:expansion-of-Tx-Ty-intro}
	T_x\cdot T_y=\sum_{z \in W}c^{z}_{x,y}T_z
	\end{equation}
        in $\cH_W$ the following are true.
        \begin{enumerate}
        \item\label{part:boundedinBruhat-coxeterexpansionthm}  The coefficient $c^{z}_{x,y}$ is nonzero only if $z \geq xy$ in the Bruhat order.
        \item\label{part:lowerboundnumterms-coxeterexpansionthm} The coefficient $c^{xy}_{x,y}=q^{N}$, and $c^{xs_{\beta }y}_{x,y}\neq 0$ for all $\beta \in \Inv(x) \cap \Inv(y^{-1})$. In particular, we have an lower bound on the support of \eqref{eq:expansion-of-Tx-Ty-intro}: 
          \begin{equation}
            \label{eq:lower-bound-intro}
 \left|\left\lbrace z\in W\mid c^{z}_{x,y}\neq 0 \right\rbrace\right| \geq N + 1 
          \end{equation}
        \item\label{part:upperboundnumterms-coxeterexpansionthm} We also have an upper bound on the support of \eqref{eq:expansion-of-Tx-Ty-intro}:
	\begin{equation}\label{eq:nonzerocovercoeffs-intro}
	\left|\left\lbrace z\in W\mid c^{z}_{x,y}\neq 0 \right\rbrace\right|           \leq 2^{N}
	\end{equation} 
        \end{enumerate}
\end{Theorem}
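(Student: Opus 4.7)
The plan is to prove all three parts simultaneously by induction on $\ell(y)$, using throughout the ordinary Coxeter identity $\ell(x)+\ell(y)-\ell(xy)=2N$ (where $N=|\Inv(x)\cap\Inv(y^{-1})|$). The base case $\ell(y)=0$ is trivial: $T_xT_e=T_x$, $N=0$, and $\{x\}=\{xy\}$ is a singleton support satisfying every claim.

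For the inductive step, I fix a simple reflection $s=s_i$ that is a left descent of $y$, write $y=sy'$ with $\ell(y')=\ell(y)-1$, and expand $T_xT_y=(T_xT_s)T_{y'}$. A preparatory lemma, obtained by directly computing $\Inv(xs)$ and $\Inv(y'^{-1})$ from $\Inv(x)$ and $\Inv(y^{-1})$ via the action of $s$ on positive roots, records the key fact: the assignment $\beta\mapsto s\beta$ gives a bijection $\Inv(xs)\cap\Inv(y'^{-1})\to\Inv(x)\cap\Inv(y^{-1})$ when $\alpha_i\notin\Inv(x)$ (Case~A), and a bijection onto $\Inv(x)\cap\Inv(y^{-1})\setminus\{\alpha_i\}$ when $\alpha_i\in\Inv(x)$ (Case~B); moreover, under this bijection $(xs)\,s_\beta\,(y')=x\,s_{s\beta}\,y$, so predicted support elements match up between the pairs.

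In Case~A, the quadratic relation gives $T_xT_y=T_{xs}T_{y'}$, with $(xs)(y')=xy$ and $N(xs,y')=N$; all three claims pass directly from the induction on $(xs,y')$ via the bijection. In Case~B, we instead have $T_xT_y=(q-1)T_xT_{y'}+q\,T_{xs}T_{y'}$. The identity $xsy=s_{x\alpha_i}\cdot xy$, combined with $(xy)^{-1}(x\alpha_i)=-y^{-1}\alpha_i>0$, shows that $xsy>xy$ in the Bruhat order, and consequently $N(xs,y')=N-1$ and $N(x,y')\le N-1$. Part~(1) follows since $\mathrm{supp}(T_xT_{y'})$ lies Bruhat-above $xy'=xsy>xy$ and $\mathrm{supp}(T_{xs}T_{y'})$ lies above $xy$. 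Part~(3) follows from the inductive bound $|\mathrm{supp}(T_xT_y)|\le 2^{N(x,y')}+2^{N-1}\le 2^N$. For part~(2), the coefficient of $T_{xy}$ comes entirely from $q\,T_{xs}T_{y'}$ (since $xy<xsy$ keeps $T_{xy}$ out of $\mathrm{supp}(T_xT_{y'})$) and evaluates to $q\cdot q^{N-1}=q^N$, while for $\beta\in\Inv(x)\cap\Inv(y^{-1})\setminus\{\alpha_i\}$, the non-vanishing of $c^{xs_\beta y}_{x,y}$ is inherited from $T_{xs}T_{y'}$ through the bijection.

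The delicate point I expect to be the main obstacle is the non-vanishing of $c^{xs_{\alpha_i}y}_{x,y}=c^{xsy}_{x,y}$ in Case~B. Since $xsy=xy'$ is the leading support element of $T_xT_{y'}$ with coefficient $q^{N(x,y')}$, it contributes the clean term $(q-1)q^{N(x,y')}$; but $xsy$ may also lie in $\mathrm{supp}(T_{xs}T_{y'})$, adding an unknown $q\cdot c^{xsy}_{xs,y'}$ that could in principle cancel. In every example I have checked this auxiliary coefficient vanishes, suggesting one should prove a structural lemma asserting $xsy\notin\mathrm{supp}(T_{xs}T_{y'})$ in Case~B. Failing that, I would strengthen the induction to carry along the top $q$-degree of each structure constant and argue that $\deg_q c^{xsy}_{xs,y'}<N-1$, so that the $q^N$ coefficient of $(q-1)q^{N(x,y')}$ cannot be cancelled. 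A conceptually cleaner alternative, which would short-circuit the entire case analysis, is to realize the expansion via a Deodhar-style subword decomposition along a reduced expression of $y$: the descending steps of the iterative computation are indexed bijectively by $\Inv(x)\cap\Inv(y^{-1})$, so the distinguished subword that stays at exactly one descent $\gamma$ (and moves at the other $N-1$) contributes $q^{N-1}(q-1)\ne 0$ to $c^{xs_\gamma y}_{x,y}$, delivering parts~(2) and (3) in one stroke.
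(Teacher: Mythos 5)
Your inductive skeleton coincides with the paper's: induct on $\ell(y)$, peel off a left descent $s$ of $y$, and in the case $xs<x$ use $T_xT_y=(q-1)T_xT_{y'}+qT_{xs}T_{y'}$, i.e.\ the recursion $c^z_{x,y}=(q-1)c^z_{x,y'}+q\,c^z_{xs,y'}$, together with the bookkeeping relating $\Inv(xs)\cap\Inv(y'^{-1})$ and $\Inv(x)\cap\Inv(y'^{-1})$ to $\Inv(x)\cap\Inv(y^{-1})$ (these are the paper's Lemmas \ref{lem:upstep-auxiliary} and \ref{lem:downstep-auxiliary}). You have also correctly located the real difficulty: non-vanishing does not pass through a sum of two inductively known terms unless cancellation is excluded. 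But the gap is both wider than you acknowledge and not closed by either of your proposed fixes. It is wider because the threat is not confined to $z=xsy$: for $\beta\in\Inv(x)\cap\Inv(y^{-1})\setminus\{\alpha_i\}$ you say $c^{xs_\beta y}_{x,y}\neq 0$ is ``inherited from $T_{xs}T_{y'}$,'' yet the recursion also contains the summand $(q-1)c^{xs_\beta y}_{x,y'}$, which need not vanish and could a priori cancel the inherited term. And neither fix works: the structural lemma $xsy\notin\mathrm{supp}(T_{xs}T_{y'})$ is left as an unproven empirical observation (and is not actually needed), while tracking only the top $q$-degree cannot rule out two leading terms of equal degree and opposite sign; moreover the strict bound $\deg_q c^{xsy}_{xs,y'}<N-1$ does not follow from anything you establish (the available bound is only $\le N-1$), and when $|\Inv(x)\cap\Inv(y'^{-1})|<N-1$ the term $(q-1)q^{|\Inv(x)\cap\Inv(y'^{-1})|}$ has no $q^N$ coefficient at all, so the coefficient you propose to protect may be absent.

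The missing ingredient is a sign invariant, and it is exactly why the paper proves the stronger part \eqref{thpart:degreeboundoncoeffs} of Theorem \ref{thm:expansion-properties-Coxeter}: every nonzero $c^z_{x,y}$ lies in $\ZZ[q]$, has degree $\le N$ with \emph{positive leading coefficient}, and has lowest-degree term of degree $\ge\ell(xy)+N-\ell(z)$ with sign $(-1)^{\ell(z)-\ell(xy)}$. Positivity of leading coefficients is preserved by $(q-1)(\cdot)+q(\cdot)$, so $c^z_{x,y}\neq 0$ as soon as one of $c^z_{x,y'}$, $c^z_{xs,y'}$ is nonzero; this disposes of every cancellation at once. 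You should strengthen your induction hypothesis accordingly. (Your Deodhar-style alternative can be repaired the same way: each branch of the iterated expansion contributes a product $q^a(q-1)^b$, which has positive leading coefficient, so the branches ending at a fixed $z$ cannot sum to zero; but exhibiting one branch with nonzero contribution does not by itself prove non-vanishing, so as written that route has the same gap.)
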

Note two special cases. If $\ell(x) + \ell(y) = \ell(xy)$, then $N = 0$ by Theorem \ref{thm:intro-length-deficits}. By parts \eqref{part:lowerboundnumterms-coxeterexpansionthm} and \eqref{part:upperboundnumterms-coxeterexpansionthm} of the theorem, the right hand side of \eqref{eq:expansion-of-Tx-Ty-intro} is only one term with coefficient $1$. That is, we recover \eqref{eq:length-additive-intro}.

If $\ell(x) + \ell(y) = \ell(xy) + 2$, then $\Inv(x) \cap \Inv(y^{-1})$ is a singleton containing a single root $\beta$, and $N=1$.  In this case, the theorem says that $
T_x T_y = c T_{x s_\beta y} + q T_{xy}$
for some coefficient $c$. In the main text, we prove a more elaborated version of \Ref{thm:intro-length-deficits}, from which we compute this coefficient to be $q-1$. So in this case we have: 
\begin{equation}
  \label{eq:length-deficit-two-intro}
T_x T_y = (q-1) T_{x s_\beta y} + q T_{xy}  
\end{equation}
Note this case includes the special situation where $x$ and $y$ are both the same simple reflection, and then \eqref{eq:length-deficit-two-intro} is just the quadratic relation for the Hecke algebra \eqref{eq:T-identities}.

As our interest is Kac-Moody affine Hecke algebras, we state the following conjecture. 
\begin{Conjecture}[Conjecture \ref{conj:main-conjecture} in the main text]
  \label{conj:intro-main-conjecture}
  Theorem \ref{thm:intro-support-of-product} and formulas \eqref{eq:length-additive-intro} and \eqref{eq:length-deficit-two-intro} hold for Kac-Moody affine Hecke algebras.
\end{Conjecture}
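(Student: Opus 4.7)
The plan is to proceed by induction on the length deficit $N = |\Inv(x) \cap \Inv(y^{-1})|$, which is finite by Theorem \ref{thm:intro-length-deficits}. The base case $N = 0$ is the identity \eqref{eq:length-additive-intro}, namely $T_x T_y = T_{xy}$ for length-additive pairs. I would attempt this by expressing $T_x$ and $T_y$ in the Bernstein presentation and showing that the commutations forced by the Bernstein relation produce no interference: if $x = (v, \mu)$ and $y = (w, \lambda)$ in the factorization $W \ltimes \Titscone$, the positive roots swept across when rewriting $X^\mu T_w$ should all lie outside $\Inv(x) \cap \Inv(y^{-1}) = \varnothing$, forcing every correction term coming from the Bernstein relation to vanish. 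Making this precise requires a careful bookkeeping matching Bernstein-relation correction terms with root inversions, which is the ingredient specific to the Kac-Moody affine case.

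For the inductive step I would use \eqref{eq:length-deficit-two-intro} as the elementary building block and peel off one element of $\Inv(x) \cap \Inv(y^{-1})$ at a time. Concretely, pick $\beta \in \Inv(x) \cap \Inv(y^{-1})$ and try to factor $y = y_1 y_2$ with $\ell(y) = \ell(y_1) + \ell(y_2)$, $\Inv(y_1^{-1}) = \{\beta\}$, and $|\Inv(x) \cap \Inv(y_2^{-1})| = N-1$ in an appropriately translated sense. Then by the base case $T_x T_y = (T_x T_{y_1}) T_{y_2}$; by \eqref{eq:length-deficit-two-intro}, $T_x T_{y_1} = (q-1) T_{x s_\beta} + q T_{x y_1}$; and both resulting products with $T_{y_2}$ fall within the inductive hypothesis at deficit $N-1$. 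The Bruhat bound would propagate because the Bruhat order on $W \ltimes \Titscone$ is preserved under length-additive right multiplication; the leading $q^N$ coefficient would follow by tracking $q$ through the induction; and the $2^N$ support bound from the two-term branching at each step.

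The main obstacle --- and the reason this remains a conjecture --- is the existence of the desired factorization $y = y_1 y_2$. In Coxeter Weyl groups such a factorization comes for free: one chooses a simple reflection in a reduced expression for $y$ whose inversion is $\beta$, and the strong exchange condition provides the compatibility. For $x, y \in W \ltimes \Titscone$ no such reduced-expression theory is available; $W \ltimes \Titscone$ is not even a group, and although the reflection $s_\beta$ makes sense in the ambient affine Kac-Moody Weyl group, it need not live in $W \ltimes \Titscone$, and its interaction with the length function is not controlled by any Coxeter-style exchange condition. As a partial cross-check one can extract linear constraints on the $c^z_{x,y}$ from the trivial and sign representations of Theorem \ref{introthm:repstrivialandsign}, namely $\sum_z c^z_{x,y} q^{\ell(z)} = q^{\ell(x)+\ell(y)}$ and $\sum_z c^z_{x,y} (-1)^{\ell(z)} = (-1)^{\ell(x)+\ell(y)}$, but these are only consistency checks. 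A full resolution appears to require a genuine substitute for the Coxeter reduced-expression combinatorics on $W \ltimes \Titscone$, which is the broader program of this paper.
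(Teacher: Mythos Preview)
This statement is a \emph{conjecture}, not a theorem: the paper offers no proof, only a handful of explicit verifications for $\widehat{SL_2}$ in Section~\ref{sec:conjecture-in-the-kac-moody-affine-setting} and an explanation of why the Coxeter argument does not carry over. Your diagnosis of the obstruction is exactly the paper's own: the proof of Theorem~\ref{thm:intro-support-of-product} for Weyl groups proceeds by induction on $\ell(y)$ using reduced words and the exchange condition, and neither is available in $W_{\cT}$. So on the big picture you and the paper agree.

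That said, two points in your sketch are less innocent than they may appear. First, your base case $N=0$ is already open: the identity $T_xT_y=T_{xy}$ for length-additive pairs in $W_{\cT}$ is part of the conjecture, not an input. Your proposed mechanism---that Bernstein-relation correction terms vanish because the relevant roots avoid $\Inv(x)\cap\Inv(y^{-1})=\varnothing$---does not match how the Bernstein relation actually behaves. The correction terms in \eqref{eq:Bernstein} are governed by the single simple root $\alpha_i$ and the integer $\langle\alpha_i,\mu\rangle$, not by the inversion set of the product; even the easy length-additive example in \S6.1 requires nontrivial cancellation across several Bernstein moves and Iwahori--Matsumoto relations. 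Second, your inductive step invokes \eqref{eq:length-deficit-two-intro} as a building block, but that formula is itself part of the conjecture in the Kac--Moody affine setting; you would need an independent argument for the $N=1$ case before using it. On the factorization issue you are right to be worried, and the situation is slightly worse than you say: the reflection $s_{\beta[n]}$ lies in $W_P$ but typically not in the sub-semigroup $W_{\cT}$, so a ``factor off one inversion'' step would in general leave the domain where the $T$-basis is defined.
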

Observe that Theorem \ref{thm:intro-length-deficits} is required to even state this conjecture. We check its validity for several cases for $\widehat{SL_2}$ in Section \ref{sec:conjecture-in-the-kac-moody-affine-setting}. 

Theorem \ref{thm:intro-support-of-product} and Conjecture \ref{conj:intro-main-conjecture} incorporate all the Coxeter-type phenomena that are currently known in the Kac-Moody affine case: the length function, the Bruhat order, inversion sets, and the $T$-basis of the Kac-Moody affine Hecke algebra. However, our proof of Theorem \ref{thm:intro-support-of-product} uses reduced words, which is an aspect of Coxeter combinatorics that does not generalise to the Kac-Moody affine case. So what makes Conjecture tantalising is that (1) it makes precise sense, (2) it is true in the case of Coxeter Weyl groups, (3) the proof in the Coxeter Weyl group case does not generalise, and yet (4) it appears to be true. Therefore, we expect proving the Conjecture will require uncovering a richer theory of Coxeter combinatorics for Kac-Moody affine Hecke algebras.

\subsection{Specializing $q=0$ and the Demazure product}

The $T$-basis of $\cH$ has structure coefficients lying in $\ZZ[q,q^{-1}]$ (see \S \ref{sssect:mult-in-T-basis}). However, more is true: the structure coefficients must be non-negative integers when $q$ is specialized to a prime power, and one deduces they in fact lie in $\ZZ[q]$ (\cite[\S 6.7]{BardyPanse-Gaussent-Rousseau-2016} and \cite[Theorem 3.22]{Muthiah-2018}). For affine Hecke algebras, this is an immediate consequence of the Iwahori-Matsumoto relations, but it is a much subtler statement in the Kac-Moody affine setting.
We may therefore consider the $q=0$ specialization of the basis. This is the content of Section \ref{sect:Demazure-product}. We state the following conjecture.

\begin{Conjecture}[Conjecture \ref{conj:q-equals-zero-specialization} in the main text]
Let $x,y \in W_{\cT}$. Then there exists a unique $x \star y \in W_{\cT}$ such that:
\begin{equation}
  \label{eq:d:22}
  T_x T_y \equiv (-1)^{\ell(x) + \ell(y)} T_{x \star y} \mod q
\end{equation}
\end{Conjecture}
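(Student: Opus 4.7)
Uniqueness is automatic: since all structure coefficients of the $T$-basis lie in $\ZZ[q]$ and the reductions $\{T_z \bmod q\}_{z \in W_{\cT}}$ remain $\ZZ$-linearly independent, at most one $z \in W_{\cT}$ can satisfy the congruence. The content of the conjecture is therefore the existence of $x \star y$. I would attack this by a two-stage argument. First I would handle the Coxeter subcase where $x, y \in W \subseteq W_{\cT}$: fixing a reduced expression $y = s_{i_1}\cdots s_{i_k}$, one argues by induction on $k$. Modulo $q$ the quadratic relation becomes $T_{s_i}^2 \equiv -T_{s_i}$, so each multiplication $T_{x'} \cdot T_{s_i}$ is either $T_{x' s_i}$ (when length increases) or $-T_{x'}$ (when it decreases). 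Iterating produces $T_x T_y \equiv \pm T_{x \star y} \pmod q$, where $\star$ is the classical Demazure product on $W$ and the sign is governed by the number of length-contracting steps; this settles the conjecture inside the finite Hecke subalgebra $\cH_W$.

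For the full Kac-Moody affine algebra $\cH$ I would use the Bernstein presentation, writing each $T_x$ as an integer combination of Bernstein monomials $T_w \Theta^\lambda$, and then exploit the Bernstein relation modulo $q$,
\begin{equation*}
\Theta^\lambda T_{s_i} \equiv T_{s_i}\Theta^{s_i\lambda} - \frac{\Theta^\lambda - \Theta^{s_i\lambda}}{1-\Theta^{-\alpha_i^\vee}} \pmod q,
\end{equation*}
to commute lattice factors past finite Hecke generators. Taking Conjecture \ref{conj:intro-main-conjecture} as input, the product $T_x T_y$ is then a controlled finite $\ZZ[q]$-combination of $T_z$'s, with support inside a set determined by $\Inv(x) \cap \Inv(y^{-1})$; the task reduces to showing that exactly one such coefficient has nonzero constant term and to identifying the corresponding $z$ as $x \star y$.

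The main obstacle is the absence of reduced expressions in $W_{\cT}$: the inductive Coxeter argument of Stage 1 cannot be transported, and even granted Conjecture \ref{conj:intro-main-conjecture}, there is no obvious combinatorial prescription picking out $x \star y$ among the elements in the support of $T_x T_y$. I expect a complete proof will require either a direct construction of a ``$0$-Kac-Moody affine Hecke algebra'' — a specialization of $\cH$ at $q=0$ whose $T$-basis is naturally indexed by $W_{\cT}$ and in which the Demazure identity is essentially a defining relation — or a geometric model, e.g.\ via affine Kac-Moody Schubert varieties, in which the $q=0$ limit arises as a concrete degeneration of the $p$-adic convolution algebra. Either path fits squarely into the richer Coxeter combinatorics for $W_{\cT}$ that the Introduction calls for, and in particular I expect this conjecture and Conjecture \ref{conj:intro-main-conjecture} to be proved together rather than separately.
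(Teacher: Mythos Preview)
The statement is a \emph{Conjecture}, and the paper does not prove it. The paper's own ``treatment'' consists of (i) observing that the structure coefficients lie in $\ZZ[q]$ so that the $q=0$ specialization makes sense, (ii) verifying the congruence in the $\widehat{SL_2}$ examples of Section~\ref{sec:conjecture-in-the-kac-moody-affine-setting}, and (iii) pointing to Schremmer's quantum Bruhat graph approach and Lewis Dean's ongoing work as a possible route. There is no proof to compare your proposal against.

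Your proposal is, correspondingly, not a proof either---and you say so explicitly. Your identification of the main obstacle (no reduced expressions in $W_{\cT}$, hence no inductive Coxeter argument) is exactly the difficulty the paper flags throughout. Your Stage~1 is the classical fact recorded in \eqref{eq:classical-demazure-product-and-q-equal-zero} and Remark~\ref{rmk:Coxeter-demazure-product}, so nothing new is needed there. Your Stage~2 suggestion of assuming Conjecture~\ref{conj:main-conjecture} is natural, but be aware that even the full strength of Theorem~\ref{thm:expansion-properties-Coxeter}(iv) does not by itself pin down a unique $z$ with $c^z_{x,y}(0)\neq 0$: the degree bounds allow several $z$ to have nonzero constant term, and in the Coxeter case one needs the separate input that the Demazure product $x\star y$ is the Bruhat-maximal element of the support (Remark~\ref{rmk:Coxeter-demazure-product}). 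So ``exactly one coefficient has nonzero constant term'' is not a reduction but the whole problem restated.

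One concrete avenue you do not mention and the paper does: Schremmer's description of the affine Demazure product via quantum Bruhat graphs, which the paper suggests may generalize to the Kac-Moody affine setting.
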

In the case of affine Hecke algebras, we know that $\star$ is exactly the Demazure product \eqref{eq:d:2}. In the Kac-Moody affine context, we check \eqref{eq:d:22} in a number of cases for $\widehat{SL_2}$ in Section \ref{sec:conjecture-in-the-kac-moody-affine-setting}. Furthermore, the formulas \Ref{eq:length-additive-intro} and \Ref{eq:length-deficit-two-intro} give precise conjectures for $x\star y$ in the cases where the pair $x,y$ is length additive (Conjecture \ref{conj:demprod-lengthadd}) or has length-deficit-two (Conjecture \ref{conj:demprod-lengthdef2}).

Schremmer's recent work on the affine Demazure product \cite{Schremmer} offers another approach using quantum Bruhat graphs. Finally, we mention Lewis Dean has made progress in generalising Schremmer's approach and is developing the {\em Kac-Moody affine Demazure product}.

\subsection{Acknowledgements}
The first named author was supported by JSPS KAKENHI Grant Number JP19K14495. 
The second named author was supported by the grant DE200101802 of the Australian Research Council during some of the work on this paper. 
We thank Lewis Dean, August H\'ebert, and Manish Patnaik for stimulating conversations.


\section{Preliminaries}\label{sect:prelim}

\subsection{Kac-Moody root system}

Fix a Kac-Moody root datum indexed by a Dynkin diagram $I$. Let $\Delta $ denote the set of real roots of our Kac-Moody root system, and $\Delta^+\subset \Delta $ the set of positive real roots. Let $\{\alpha_i\}_{i\in I}$ be the set of simple roots.  We write $\beta >0$ for $\beta \in \Delta ^+$ and $\beta <0$ for $\beta \in \Delta-\Delta^+=-\Delta^+.$ For any $\beta \in \Delta$ we will write the same symbol $\beta$ for the corresponding real coroot. Whether the symbol refers to a real root or a real coroot will be clear from context. In particular, $\{\alpha_i\}_{i\in I}$ also denotes the set of simple coroots. Let $P$ denote the coweight lattice and $Q$ denote the coroot lattice.

Let $W$ denote the Weyl group, which is generated by the simple reflections $s_i$ for $i \in I$. Recall that $W$ is a Coxeter group, and we have the the length function $\ell:W\rightarrow \ZZ_{\geq 0}$ and Bruhat order $>$. Because $W$ is the Weyl group of a Kac-Moody root system, these structures can be interpreted using the root system. For $w\in W$ we have $\ell(w)=|\Inv(w)|,$ where $\Inv(w)=\Delta ^+\cap w^{-1}(-\Delta ^+)$. For $w\in W,$ $\beta \in \Delta^+ $, and $s_\beta $ the corresponding reflection we have 
\begin{align}\label{eq:def-Bruhat-classical}
 ws_{\beta}  > w &\text{ if } w(\beta )  \in \Delta^+  \\
 s_{\beta}w  >w &\text{ if } w^{-1}(\beta) \in \Delta^+  
\end{align}
and such inequalities generate the Bruhat order. Recall also that these generating inequalities can be phrased in terms of of the length function:
\begin{align}
  \label{eq:bruhat-ineq-length-ineq}
w(\beta) > 0 \text{ if and only if  } \ell(w s_{\beta}) > \ell(w)  
\end{align}
and:
\begin{align}
w^{-1}(\beta) > 0 \text{ if and only if }\ell(s_{\beta} w ) > \ell(w)
\end{align}

\subsection{Kac-Moody affine root system}

Following Braverman, Kazhdan, and Patnaik \cite[Appendix B]{Braverman-Kazhdan-Patnaik}, we define the set of Kac-Moody affine roots (KM affine roots for short) as :
\begin{equation}\label{eq:def:G-affine-roots}
\afD=\{\beta+n\pi \in \ZZ\Delta \oplus \ZZ\pi \mid \beta\in \Delta,\ n\in \ZZ \}
\end{equation}
and the set of positive KM affine roots as:
\begin{equation}\label{eq:def:pos-G-affine-roots}
\afDp=\{\beta+n\pi \in \afD \mid \beta\in \Delta^+,\ n\geq 0 \}\cup \{\beta+n\pi \in \afD \mid \beta\in -\Delta^+,\ n>0 \}
\end{equation}
Here $\pi$ is a formal symbol, and we write $\beta +n\pi >0$ if $\beta +n\pi \in \afDp$.

Following \cite[(5),(6)]{Muthiah-Orr-2019} we set 
\begin{equation}
\sgn:\ZZ\rightarrow \{\pm 1\};\ \sgn(n)=\left\lbrace\begin{array}{ll}
+1 & \text{ if }n\geq 0\\
-1 & \text{ if }n< 0\\
\end{array}\right.
\end{equation}
and for any $\beta\in \Delta ^+$ and $n\in \ZZ$ define $\beta[n]\in \afDp$
\begin{equation}
\beta[n]=\sgn(n)\cdot (\beta +n\pi )=\sgn(n)\beta +|n|\pi . 
\end{equation}
Note that $(\beta,n)\mapsto \beta[n]$ is a bijection $\Delta^+\times \ZZ\to \afDp.$ 

\subsection{The Tits cone and $\WTits$}

We denote elements of the semidirect product $W_P=W \ltimes P$ by $\pi^\mu w$ ($\mu\in P,$ $w\in W$). This group acts on the set $\afD$ via: 
\begin{equation}\label{eq:Weylaction-G-affine}
\pi^{\mu}w(\beta+n\pi )=w(\beta )+(n+\langle \mu, w(\beta )\rangle )\pi 
\end{equation}

For $\beta[n]\in \afDp$ we write $s_{\beta[n]}=\pi^{n\beta}s_{\beta }\in W_P$ for the corresponding reflection. Here the $\beta$ in the exponent of $\pi$ is the real coroot corresponding to $\beta \in \Delta$.
Analogously to the classical case, we may define 
\begin{equation}\label{eq:def:inv-doubleaffine}
\Inv(\pi^{\mu}w)=\{\beta[n]\in \afDp \mid \pi^{\mu}w(\beta[n])\notin \afDp\}.
\end{equation}
Note that {\emph{unlike}} in the classical setting, $\Inv(\pi^{\mu}w)$ is generally not a finite set. 

Let $P^+\subset P$ denote the set of dominant coweights (i.e. for $\lambda \in P$ we have $\lambda\in P^+$ if $\langle \alpha_i,\lambda  \rangle \geq 0$ for every $i\in I$). Let
\begin{equation}
  \label{eq:def-of-tits-cone}
  \Titscone = \left\{ \mu \in P \suchthat \mu = w(\lambda) \text{ for some } \lambda \in P^+ \text{ and } w\in W\right\} 
\end{equation}
denote the Tits cone, and let $\WTits=W\rtimes \Titscone$. This is a subsemigroup of $W_P$.

\subsection{The Bruhat order and length function on $\WTits$}
Fix a weight $\rho$ such that $\langle \rho, \alpha_i \rangle = 1$ for all $i \in I$. Such a $\rho$ always exists, and if the Cartan matrix is non-singular it is unique, but otherwise we have a choice. We choose $\rho$ such there exists $N \in \ZZ_{\geq 1}$ such that
\begin{equation}
\label{eq:d:20}
\langle \rho, \mu \rangle \in \ZZ \cdot \frac{1}{N}
\end{equation}
for all $\mu \in P$. We choose $\rho$ and $N$ so that $N$ is as small as possible.

\begin{Remark}
  In finite and affine cases, we will have $N=1$ or $N=2$, but for general Kac-Moody types arbitrarily large $N$ may appear. For example, for the rank-two hyperbolic type with Cartan matrix $\twomat{2}{-a}{-a}{2}$ with $a \geq 3$, we have $\langle \rho, \Lambda \rangle  = \frac{2}{4-a^2}$ for $\Lambda$ one of the two fundamental weights.
\end{Remark}

We define the {\em length function} $\ell : \WTits \rightarrow  \ZZ \cdot \frac{1}{N}$ by the following formula. For $\mu\in \Titscone$ and $w\in W$ we set (see e.g. \cite[(19)]{Muthiah-Orr-2019}):
\begin{equation}\label{eq:def-length-doubleaffine}
\ell(\pi^{\mu}w)=2\innprod{\rho}{\mu }+\sum_{\eta\in \Delta^+} \left\lbrace \begin{array}{ll}
-\innprod{2\eta}{\mu} & \text{ if }\innprod{\eta}{\mu}<0\text{ and }\eta\notin \Inv(w^{-1})\\
-\innprod{2\eta}{\mu}-1 & \text{ if }\innprod{\eta}{\mu}<0\text{ and }\eta\in \Inv(w^{-1})\\
1 & \text{ if }\innprod{\eta}{\mu}\geq 0\text{ and }\eta\in \Inv(w^{-1})\\
0 & \text{ if }\innprod{\eta}{\mu}\geq 0\text{ and }\eta\notin \Inv(w^{-1}).
\end{array}\right.
\end{equation}
Observe that, in particular, for $\mu \in \cT$, we have $\ell(\pi^{\mu})=2\innprod{\rho}{\mu _+}$ where $\mu_+ \in P^+$ is the unique dominant coweight in the Weyl orbit $W \cdot \mu$.

The {\em Bruhat order} $>$ on $\WTits$ is defined as follows. Suppose for $\pi^\mu w \in \WTits$, $\beta[n]\in \afDp$, and $\pi^{\mu} w s_{\beta[n]} \in \WTits$. Generalizing \eqref{eq:def-Bruhat-classical}, we have
\begin{align}\label{eq:def-Bruhat-doubleaffine}
\pi^{\mu}ws_{\beta[n]}  > \pi^{\mu} w &\text{ if }\pi^{\mu }w(\beta[n]) \in \afDp \\
s_{\beta[n]}\pi^{\mu}w  > \pi^{\mu}w \ &\text{ if } w^{-1}\pi^{-\mu }(\beta[n]) \in  \afDp
\end{align}
and the Bruhat order is generated by all such inequalities. 

This definition was first given by Braverman, Kazhdan, and Patnaik \cite{Braverman-Kazhdan-Patnaik}. Muthiah in \cite{Muthiah-2018} proved that the order is in fact a partial order, and Muthiah and Orr proved the following compatibility statement generalizing \eqref{eq:bruhat-ineq-length-ineq}:
\begin{align}
  \label{eq:km-bruhat-lenght-ineq}
\pi^\mu w(\beta[n]) \in \afDp  \text{ if and only if  } \ell(\pi^\mu w s_{\beta[n]}) > \ell(\pi^\mu w)  
\end{align}
 Following \cite[(15)]{Muthiah-Orr-2019} define:
\begin{equation}\label{eq:def:Inv++}
\Inv^{++}_{\pi^\mu w}(s_{\beta [n]}) = \left\lbrace \gamma [m]\in \Inv(s_{\beta [n]})\mid \pi^\mu w(\gamma [m])>0\text{ and } \pi^\mu w(-s_{\beta [n]}(\gamma [m]))>0 \right \rbrace 
\end{equation}
Muthiah and Orr prove the following more elaborated statement from which \eqref{eq:km-bruhat-lenght-ineq} can be deduced.
\begin{Theorem}[\mbox{\cite[Theorem 3.4]{Muthiah-Orr-2019}}]
Suppose $\pi^\mu w \in \WTits$, $\beta[n]\in \afDp$, $\pi^\mu w s_{\beta[n]} \in \WTits$, and $\pi^\mu w (\beta[n]) > 0$. Then $ \Inv^{++}_u(s_{\beta [n]})$ is a finite set and:
\begin{equation}
  \label{eq:2}
  \ell(\pi^\mu w s_{\beta[n]}) = \ell(\pi^\mu w) + | \Inv^{++}_{\pi^\mu w}(s_{\beta [n]})|
\end{equation}
\end{Theorem}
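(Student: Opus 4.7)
The plan is to reduce the identity to a direct computation with the explicit length formula \eqref{eq:def-length-doubleaffine}, together with an explicit parameterization of $\Inv(s_{\beta[n]})$. Write $u = \pi^\mu w$ and use $s_{\beta[n]} = \pi^{n\beta} s_\beta$ to rewrite
$u s_{\beta[n]} = \pi^{\mu + n w(\beta)} \, w s_\beta$.
By \eqref{eq:Weylaction-G-affine}, the hypothesis $u(\beta[n]) > 0$ becomes an explicit sign condition on $n + \langle \mu, w(\beta)\rangle$ that depends on $\sgn(n)$ and the sign of $w(\beta)$. This will be invoked throughout to discard inconsistent sign configurations.

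First I would prove finiteness of $\Inv^{++}_u(s_{\beta[n]})$. The full inversion set $\Inv(s_{\beta[n]})$ is parameterized explicitly through the affine root string through $\beta[n]$ and is generally infinite. However, the double positivity requirement in \eqref{eq:def:Inv++}—that both $u(\gamma[m]) > 0$ and $-u s_{\beta[n]}(\gamma[m]) > 0$—forces the pairing $\langle \mu, \cdot\rangle$ to satisfy two opposing inequalities. Combined with the positivity of $\gamma[m]$ and of $s_{\beta[n]}(\gamma[m])$ in $\afDp$, this confines both the real part $\gamma$ and the level $m$ to bounded regions, so only finitely many affine roots survive.

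The main identity is proved by computing $\ell(us_{\beta[n]}) - \ell(u)$ term by term from \eqref{eq:def-length-doubleaffine}. The constant term changes by $2 n \langle \rho, w(\beta)\rangle$, and the sum over $\eta \in \Delta^+$ should be grouped by $s_\beta$-orbits, handling $\eta = \beta$ and $\eta$ with $s_\beta(\eta) \in -\Delta^+$ separately from the generic pairs $\{\eta, s_\beta(\eta)\} \subset \Delta^+$. For each orbit, the differences between $\langle \eta, \mu\rangle$ and $\langle \eta, \mu + n w(\beta)\rangle$, and between membership in $\Inv(w^{-1})$ and $\Inv(s_\beta w^{-1})$, determine how many of the four branches of the case split in \eqref{eq:def-length-doubleaffine} are toggled. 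Each toggle should correspond bijectively to an element of $\Inv^{++}_u(s_{\beta[n]})$ whose real part lies in the orbit $\{\eta, s_\beta(\eta)\}$, at the unique affine level singled out by the double positivity condition. Summing over orbits then yields the identity.

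The main obstacle is the combinatorial bookkeeping of the case analysis: there are many sign combinations to track, and the hypothesis $u(\beta[n]) > 0$ must be invoked at specific moments to rule out inconsistent configurations and to fix the orientation of the count. I would organize the argument by first splitting into the four cases determined by $\sgn(n)$ and the sign of $w(\beta)$, then within each case refining by the $s_\beta$-orbit structure on $\Delta^+$. A further subtlety is that the diagonal contribution from $\eta = \beta$ interacts with the constant term $2\langle \rho, \mu + nw(\beta)\rangle - 2\langle \rho, \mu\rangle = 2n\langle \rho, w(\beta)\rangle$; treating these together should produce precisely the count of affine roots in $\Inv^{++}_u(s_{\beta[n]})$ whose real part is $\pm\beta$, closing the argument.
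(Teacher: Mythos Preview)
The paper does not contain a proof of this theorem: it is quoted verbatim from \cite[Theorem~3.4]{Muthiah-Orr-2019} in the preliminaries section, and the finiteness claim is attributed separately to \cite[Theorem~5.2]{Muthiah-Orr-2019} (see the proof of Proposition~\ref{prop:thekeyprop}). So there is no in-paper argument to compare against.

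That said, the paper does give indirect evidence of how the original proof runs, and it differs from your plan. In \S\ref{ssect:keypart} the authors invoke from \cite[\S4, Corollary~4.4]{Muthiah-Orr-2019} an explicit bijection
\[
K : \Inv\bigl((u s_{\beta[n]})^{-1}\bigr) \;\overset{\sim}{\longrightarrow}\; \Inv(u^{-1}) \sqcup \Inv^{++}_{u}(s_{\beta[n]}),
\]
which strongly suggests the Muthiah--Orr argument is bijective: one constructs $K$ and then relates inversion-set cardinalities to the length via the explicit formula \eqref{eq:def-length-doubleaffine}. Your proposal instead computes $\ell(us_{\beta[n]})-\ell(u)$ directly from \eqref{eq:def-length-doubleaffine} and matches each branch-toggle to an element of $\Inv^{++}_u(s_{\beta[n]})$. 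Both routes ultimately require the same sign bookkeeping; the bijective packaging has the advantage that the map $K$ is reusable (as the present paper reuses it in Proposition~\ref{prop:thekeyprop}), while your approach stays closer to the ground and avoids introducing auxiliary infinite sets.

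One caution on your outline: grouping ``by $s_\beta$-orbits on $\Delta^+$'' is delicate here because the sum in \eqref{eq:def-length-doubleaffine} has only finitely many nonzero terms, whereas $\Inv(s_{\beta[n]})$ and the generic $s_\beta$-orbits on $\Delta^+$ are infinite. You will need to argue separately that the set of $\eta\in\Delta^+$ for which the case in \eqref{eq:def-length-doubleaffine} actually changes between $u$ and $us_{\beta[n]}$ is finite, and that this finite set is in bijection with $\Inv^{++}_u(s_{\beta[n]})$. This is exactly the content of the bijection $K$ restricted to the ``nontrivial'' part, so your plan and the cited proof likely converge at this point.
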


\subsection{The Kac-Moody affine Hecke algebra}\label{subsect:KMAHA}

Let $q$ be an indeterminate. We define the {\em Kac-Moody affine Hecke algebra} $\cH$ to be the $\ZZ[q^{1/N},q^{-1/N}]$ algebra generated by $\left\{ T_i \right\}_{i \in I}$ and $\left\{ \Theta^\mu \suchthat \mu \in \cT \right\}$ subject to the following relations. First, the $T_i$ generate a copy of $\cH_W$, the Hecke algebra for the Coxeter group $W$, i.e., for all $i \in I$ we have the quadratic relation
\begin{equation}
\label{eq:T-identities}
  T_i^2  = (q-1) T_i + q 
\end{equation}
and the $T_i$ satisfy the braid relations for the Coxeter group $W$. Because of the braid relations, for all $w \in W$, we can unambiguously define
\begin{equation}
  \label{eq:14}
  T_w = T_{i_1} \cdots T_{i_\ell} 
\end{equation}
where $w = s_{i_1} \cdots s_{i_\ell}$ is a reduced decomposition.
The generators $\Theta^\mu$ generate a copy of the semi-group algebra of $\cT$, i.e. for all $\mu_1 , \mu_2 \in \cT$, we have $\Theta^{\mu_1} \Theta^{\mu_2} = \Theta^{\mu_1 + \mu_2}$. Finally, we have the Bernstein relation
\begin{align}
  \label{eq:Bernstein}
  T_i \Theta^\mu - \Theta^{s_i(\mu)} T_i = (q - 1) \frac{\Theta^\mu - \Theta^{s_i(\mu)}}{1 - \Theta^{-\alpha_i}}
\end{align}
for all $i \in I$ and $\mu \in \cT$.

Observe that in the special case where our Kac-Moody root datum is of finite type, we recover the Bernstein presentation of the corresponding affine Hecke algebra.

\subsubsection{Iwahori-Hecke algebras of $p$-adic Kac-Moody groups and the $T$-basis}\label{subsect:prelim-IHalgebras}

Let $\cK$ be a non-archimedean local field (e.g. the $p$-adic numbers), and let $q$ be the size of the residue field. Given the fixed Kac-Moody root datum and $\cK$, one has the corresponding $p$-adic Kac-Moody group $G$.

The group $G$ does not satisfy the Cartan decomposition unless it is of finite type, but it has a sub-semigroup $G_+$ that does. Let $J$ denote the Iwahori-subgroup of $G$. The $J$ double cosets of $G_+$ are indexed by $\WTits$. For each $x \in \WTits$, we write $T_x$ for the indicator function of the corresponding double coset. The {\em Iwahori-Hecke algebra of $G$} is the set of $J$-biinvariant functions on $G_+$ supported on finitely many double cosets. This set has an algebra structure under convolution, and it is isomorphic to the Kac-Moody affine Hecke algebra $\cH$. Therefore, the set $\left\{ T_x \suchthat x \in \WTits \right\}$ forms a basis of $\cH.$ We will call this the \emph{$T$-basis} of $\cH$. These results were first proved by Braverman, Kazhdan, and Patnaik \cite{Braverman-Kazhdan-Patnaik} in untwisted affine types and later by Bardy-Panse, Gaussent, and Rousseau \cite{BardyPanse-Gaussent-Rousseau-2016} in full generality.

\subsubsection{Computing the $T$-basis algebraically}
We will only work with the $T$-basis algebraically. We explain how to express $T$-basis elements in the Bernstein generators using \cite{Muthiah-2018, BardyPanse-Gaussent-Rousseau-2016}. A word of warning: our conventions match \cite{Braverman-Kazhdan-Patnaik} and \cite{Muthiah-2018} but differ by signs from \cite{BardyPanse-Gaussent-Rousseau-2016}.

We can write any $x\in \WTits$ as $x = \pi^{w(\lambda)} v$ for $\lambda\in P^+$ dominant and $w, v \in W$. We have (\cite[\S 6.2.1]{Braverman-Kazhdan-Patnaik} and \cite[\S 5.7]{BardyPanse-Gaussent-Rousseau-2016}):
\begin{equation}
  \label{eq:d:1}
  T_{\pi^\lambda} = q^{\langle \rho, \lambda \rangle} \Theta^\lambda
\end{equation}
The motivation for defining $\cH$ over $\ZZ[q^{\pm1/N}]$ is the fact that $\langle \rho, \lambda \rangle \in \frac{1}{N} \ZZ$.

To find the basis element for $\pi^{w(\lambda)}$ we use the following formula (\cite[Corollary 3.19]{Muthiah-2018} and \cite[Corollary 4.3]{BardyPanse-Gaussent-Rousseau-2016}):
\begin{equation}
  \label{eq:d:2}
 T_{\pi^{w(\lambda)}} = T_{w^{-1}}^{-1} T_{\pi^\lambda} T_{w^{-1}} = q^{\langle \rho, \lambda \rangle}T_{w^{-1}}^{-1} \Theta^{\lambda} T_{w^{-1}}
\end{equation}

We will also need the following generalization of the Iwahori-Matusumoto relations (\cite[Theorem 3.1]{Muthiah-2018} and \cite[Proposition 4.1]{BardyPanse-Gaussent-Rousseau-2016}). For any $y\in \WTits$ and $i \in I$, we have:
\begin{align}\label{eq:Ts_onright}
  T_{ys_i}=
  \begin{cases}
T_{y}T_i & \text{if }x(\alpha_i)\in \afDp\\
T_{y}T_i^{-1} & \text{if }x(\alpha_i)\notin \afDp
  \end{cases}
\end{align}
We also record the left side version of this relation:
\begin{align}
 \label{eq:Ts_onleft} 
  T_{s_i y} =
  \begin{cases}
    T_i T_y & \text{if }y^{-1}(\alpha_i) \in \afDp \\
    T_i^{-1}T_y & \text{if }y^{-1}(\alpha) \notin \afDp
  \end{cases}
\end{align}

We are now ready express $T_x$ for $x=\pi^{w(\lambda)}v$ as above. We choose a decomposition $v = s_{i_1} \cdots s_{i_r}$ into simple reflections. By repeated application of \eqref{eq:Ts_onright}, we compute
\begin{equation}
  \label{eq:Tbasis-in-Bernstein-generators}
  T_x =  T_{\pi^{w(\lambda)}v} = T_{\pi^{w(\lambda)}} T_{i_1}^{\eps_1} \cdots T_{i_r}^{\eps_r} = q^{\langle \rho, \lambda \rangle}T_{w^{-1}}^{-1} \Theta^{\lambda} T_{w^{-1}} T_{i_1}^{\eps_1} \cdots T_{i_r}^{\eps_r}
\end{equation}
where: 
\begin{align}
  \label{eq:signs-epsilon-k}
  \eps_k =
  \begin{cases}
1 & \text{if }\pi^{w(\lambda)} s_{i_1} \cdots s_{i_{k-1}}( \alpha_{i_{k}} ) \in \afDp\\
-1 & \text{if }\pi^{w(\lambda)} s_{i_1} \cdots s_{i_{k-1}}( \alpha_{i_{k}} ) \notin \afDp
  \end{cases}
\end{align}

\subsubsection{The Braverman-Kazhdan-Patnaik algorithm}\label{sssect:BKPalgorithm}

Formula \eqref{eq:Tbasis-in-Bernstein-generators} expresses $T$-basis elements in terms of Bernstein generators. Now we will explain an algorithm, due to Braverman, Kazhdan, and Patnaik, for expressing Bernstein generators $\Theta^{\mu},$ $\mu\in \cT$ in terms of the $T$-basis.

For arbitrary $\mu \in \Titscone$, let $\mu_+$ be its dominant Weyl-conjugate ($\mu_+\in W\mu\cap P^+$). Observe that $\mu \leq \mu_+$ in the dominance order. Consider the set:
\begin{align}
  \label{eq:3}
  \cS(\mu) = \left\{ \nu \suchthat \mu \leq \nu  \leq \mu_+ \textand \nu_+ \leq \mu_+ \right \}
\end{align}
It is clear that $\cS(\mu)$ is a finite set, and we will write down a formula for $\Theta^\mu$ by induction on the size of $\cS(\mu)$. Observe that $\{\mu,\mu_+\}\subseteq \cS(\mu)$ and $\#\cS(\mu) = 1$ if and only if $\mu$ is dominant. In this case, we have \eqref{eq:d:1}, which gives the base case of our recursion.

Now let us consider $\mu$ that is not dominant, and by induction assume that we have a formula for $\Theta^\kappa$ in the $T$-basis for all $\kappa$ with $\# \cS(\kappa) < \# \cS(\mu)$. Because $\mu$ is not dominant, there is a simple root $\alpha_i$ such that $\langle \alpha_i, \mu \rangle < 0$. Let $L = - \langle \alpha_i, \mu \rangle$. By the Bernstein relation:
\begin{align}
  \label{eq:Bernstein}
  T_i \Theta^{s_i(\mu)} - \Theta^{\mu} T_i = (q-1) ( \Theta^{s_i(\mu)} + \cdots + \Theta^{s_i(\mu) - (L-1) \alpha_i} )
\end{align}
which we can rewrite as
\begin{align}
  \label{eq:6}
  \Theta^{\mu} =\left( T_i \Theta^{s_i(\mu)} - (q-1) ( \Theta^{s_i(\mu)} + \cdots + \Theta^{s_i(\mu) - (L-1) \alpha_i} ) \right) T_i^{-1}
\end{align}
Consider the terms $\Theta^{\kappa}$ ($\kappa\in P$) appearing on the right hand side of \eqref{eq:6}. It is clear that $\mu\lneq \kappa $ in the dominance order. Furthermore, it is also clear that each such $\kappa $ is a weight appearing in the highest weight integrable module $V(\mu_+)$. So $\kappa_+ \leq \mu_+$ (where $\kappa_+\in W\kappa\cap P^+$). It follows that
\begin{align}
  \label{eq:7}
    \cS(\kappa) \subsetneq \cS(\mu)
\end{align}
for all $\kappa\in P$ with $\Theta^\kappa$ appearing on the right hand side of \eqref{eq:6}. By induction, \eqref{eq:6} together with the identities \eqref{eq:Ts_onright} and \eqref{eq:Ts_onleft} give a formula for expanding $\Theta^{\mu}$ into the $T$-basis.

\subsubsection{Multiplication in the $T$-basis}\label{sssect:mult-in-T-basis} 
We are interested in the structure coefficients of the $T$-basis. By the construction of $\cH$ as an Iwahori-Hecke algebra of a Kac-Moody group over a local field outlined in \ref{subsect:prelim-IHalgebras},  we know that they are equal to coefficients in a convolution when $q$ is specialized to a prime power. However, we will only work algebraically and compute structure coefficients by (1) expanding $T$-basis elements into the Bernstein generators using \eqref{eq:Tbasis-in-Bernstein-generators}, (2) multiplying the resulting expressions using the Bernstein relations and Hecke algebra relations, and (3) expanding the resulting expressions in the $T$-basis using the Braverman-Kazhdan-Patnaik algorithm. This algorithm shows that the structure coefficients lie in $\ZZ[q,q^{-1}]$ (which was not {\em a priori} clear from the convolution perspective), see \cite[\S 6.7]{BardyPanse-Gaussent-Rousseau-2016} and \cite[Theorem 3.22]{Muthiah-2018}. This algorithm is carried out for a few examples in Section \S \ref{sec:conjecture-in-the-kac-moody-affine-setting}.

\section{The trivial and sign representations}     \label{sect:trivial-and-sign}   
In this section, we construct the trivial and sign representations of $\cH$, generalizing the corresponding notions for affine Hecke algebras. As a consequence, we establish the first precise link between the length function and $\cH$.

\subsection{The trivial representation}

\begin{Theorem}
  \label{thm:triv-representation}
  There is an algebra map
  \begin{align}
    \label{eq:d:21}
    \triv:  \cH \rightarrow \ZZ[q^{1/N},q^{-1/N}]
  \end{align}
  defined on generators by
  \begin{align}
    \label{eq:d:22a}
    T_i &\mapsto q \\
    \label{eq:d:22b}
    \Theta^\mu &\mapsto q^{\langle \rho, \mu \rangle}
  \end{align}
  For $x \in \WTits$, we have
  \begin{align}
    \label{eq:d:23}
   T_x \mapsto q^{\ell(x)}
  \end{align}
  under $\triv$.
\end{Theorem}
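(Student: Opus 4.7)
My plan is to split the argument into two parts: (a) verify that the assignment on generators respects the defining relations of $\cH$, so that $\triv$ extends to an algebra map; and (b) compute $\triv(T_x)$ for a general basis element by working with the expansion \eqref{eq:Tbasis-in-Bernstein-generators}.

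For part (a), the quadratic relation $T_i^2 = (q-1)T_i + q$ specializes to $q^2 = (q-1)q + q$, the braid relations hold automatically because all $T_i$ map to the same scalar, and the semi-group relation $\Theta^{\mu_1}\Theta^{\mu_2} = \Theta^{\mu_1+\mu_2}$ follows from $\ZZ$-linearity of $\langle \rho, -\rangle$. The only substantive check is the Bernstein relation. Here I would first rewrite its right-hand side as an honest polynomial in the $\Theta$'s: when $k = \langle \alpha_i, \mu\rangle \geq 0$,
\[
\frac{\Theta^\mu - \Theta^{s_i(\mu)}}{1 - \Theta^{-\alpha_i}} = \Theta^{s_i(\mu)+\alpha_i} + \Theta^{s_i(\mu)+2\alpha_i} + \cdots + \Theta^\mu,
\]
and a similar expansion holds for $k \leq 0$. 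In either case, applying $\triv$ and using $\langle \rho, \alpha_i\rangle = 1$ turns the sum into a finite geometric series that evaluates to $\tfrac{q}{q-1}\bigl(q^{\langle\rho,\mu\rangle} - q^{\langle\rho,s_i(\mu)\rangle}\bigr)$; multiplying by $q-1$ recovers $q \cdot q^{\langle\rho,\mu\rangle} - q^{\langle\rho,s_i(\mu)\rangle} \cdot q$, which is exactly the image of the left-hand side.

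For part (b), I would induct on the length of the Weyl part of $x$. Writing $x = \pi^{w(\lambda)} v$ with $\lambda \in P^+$, $w,v \in W$, and $v = s_{i_1}\cdots s_{i_r}$ reduced, the base case $v = e$ is handled using \eqref{eq:d:2}: the symmetric cancellation in $T_{w^{-1}}^{-1} \Theta^\lambda T_{w^{-1}}$ together with the prefactor $q^{\langle\rho,\lambda\rangle}$ yields $\triv(T_{\pi^{w(\lambda)}}) = q^{2\langle\rho,\lambda\rangle}$, which equals $q^{\ell(\pi^{w(\lambda)})}$ since $\ell(\pi^\mu) = 2\langle\rho,\mu_+\rangle$ and $(w(\lambda))_+ = \lambda$. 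For the inductive step, set $y = \pi^{w(\lambda)} s_{i_1} \cdots s_{i_{r-1}}$, so $x = y s_{i_r}$; then \eqref{eq:Ts_onright} gives $T_x = T_y T_{i_r}^{\eps_r}$ with $\eps_r \in \{\pm 1\}$ determined by whether $y(\alpha_{i_r}) \in \afDp$, and hence $\triv(T_x) = q^{\ell(y) + \eps_r}$ by induction.

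What remains is the length identity $\ell(y s_{i_r}) = \ell(y) + \eps_r$, and I expect this to be the main step. It follows from the Muthiah--Orr length formula \eqref{eq:2} applied to the simple reflection $s_{i_r}$: since $\Inv(s_{i_r}) = \{\alpha_{i_r}\}$, the set $\Inv^{++}_y(s_{i_r})$ is either $\{\alpha_{i_r}\}$ or empty according to whether $y(\alpha_{i_r}) \in \afDp$, which directly gives $\ell(y s_{i_r}) = \ell(y) + 1$ in the positive case. In the negative case, applying the same formula with $y' = y s_{i_r}$ in place of $y$ and noting $y'(\alpha_{i_r}) = -y(\alpha_{i_r}) \in \afDp$ gives $\ell(y) = \ell(y') + 1$, hence $\ell(y s_{i_r}) = \ell(y) - 1 = \ell(y) + \eps_r$. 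Overall I do not anticipate serious obstacles; the proof amounts to the observation that $\triv$ converts the inductive construction of $T_x$ via \eqref{eq:Tbasis-in-Bernstein-generators} into the inductive length formula furnished by \eqref{eq:2}.
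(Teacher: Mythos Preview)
Your proposal is correct and follows essentially the same approach as the paper: verify the defining relations, then apply \eqref{eq:Tbasis-in-Bernstein-generators} and match the resulting exponent $2\langle\rho,\lambda\rangle + \eps_1 + \cdots + \eps_r$ with the length of $x$. The only differences are cosmetic: you spell out the Bernstein-relation check (the paper just asserts compatibility), and for the length step $\ell(ys_{i_r}) = \ell(y) + \eps_r$ you invoke the Muthiah--Orr formula \eqref{eq:2} specialized to a simple reflection, whereas the paper cites the equivalent \cite[Lemma~4.18]{Muthiah-2018}.
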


Following the usual terminology, we will call $\triv$ the \emph{trivial representation} of $\cH$.

\begin{proof}

Observe that formulas \eqref{eq:d:22a} and \eqref{eq:d:22b} are compatible with the defining relations of $\cH$, which proves the existence of $\triv$.

Let $x \in \WTits$, which we write as $x = \pi^{w(\lambda)} v$ for $v,w \in W$ and $\lambda \in P^+$. Then we have by \eqref{eq:Tbasis-in-Bernstein-generators}
\begin{align}
  \label{eq:17}
  \triv(T_x) = q^{2 \langle \rho, \lambda \rangle + \eps_1 + \cdots + \eps_r}
\end{align}
where the $\eps_1, \ldots, \eps_r$ are defined by \eqref{eq:signs-epsilon-k}. On the other hand,  we have $\ell(\pi^{w(\lambda)}) = \ell(\pi^\lambda) = 2 \langle \rho, \lambda \rangle$, and by repeated application of \cite[Lemma 4.18]{Muthiah-2018} $\ell(x) = \ell(\pi^{w(\lambda)}) + \eps_1 + \cdots + \eps_r$.
\end{proof}

\begin{Remark}
  In \cite{Muthiah-2018}, we defined the length function to have properties mimicking the Iwahori-Matsumoto relations of $\cH$, but beyond analogy we did not make any statement of a precise relation between the two. In retrospect, we see that the precise relationship between the length function and $\cH$ is given by Theorem \ref{thm:triv-representation}.

  In particular, we can use Theorem \ref{thm:triv-representation} as a definition of the length function. That is, we define the trivial representation in the Bernstein generators, and prove that each T-basis element maps to a power of $q$. We then define the length to be exactly the power of $q$. The original definition of the length function is then a consequence of the Iwahori-Matsumoto relations. 
\end{Remark}

\subsection{The sign representation}

\begin{Theorem}
  \label{thm:sign-representation}
  There is an algebra map
  \begin{align}
    \label{eq:d:21-sign}
    \sign:  \cH \rightarrow : \ZZ[ e^{  \pi i \frac{1}{N}}][q^{1/N},q^{-1/N}] 
  \end{align}
  defined on generators by
  \begin{align}
    \label{eq:d:22a-sign}
    T_i \mapsto e^{\pi i } = -1 \\
    \label{eq:d:22b-sign}
    \Theta^\mu \mapsto e^{ \pi i {\langle \rho, \mu \rangle}}
  \end{align}
  For $x \in \WTits$, we have
  \begin{align}
    \label{eq:d:23-sign}
   T_x \mapsto e^{ \pi i {\ell(x)}}
  \end{align}
  under $\sign$.
\end{Theorem}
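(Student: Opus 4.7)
The plan is to mirror the proof of Theorem~\ref{thm:triv-representation}, which divides naturally into two parts: first, verify that the proposed assignments on generators are compatible with the defining relations of $\cH$; and second, compute the image of a general $T$-basis element from the Bernstein expansion \eqref{eq:Tbasis-in-Bernstein-generators}.

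For the first part, the quadratic relation \eqref{eq:T-identities} becomes $(-1)^2 = (q-1)(-1)+q$, which is the tautology $1 = 1$. The braid relations among the $T_i$ hold automatically because every $T_i$ maps to a scalar. The semigroup identity $\Theta^{\mu_1}\Theta^{\mu_2} = \Theta^{\mu_1+\mu_2}$ becomes $e^{\pi i \langle \rho, \mu_1\rangle}\cdot e^{\pi i \langle \rho, \mu_2\rangle} = e^{\pi i \langle \rho, \mu_1+\mu_2\rangle}$, which is additivity of $\langle \rho, \cdot\rangle$. The only substantive check is the Bernstein relation \eqref{eq:Bernstein}, which must be verified after substitution. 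The most practical approach is to use the polynomial form of the right-hand side obtained as in \eqref{eq:6}: write $\Theta^\mu - \Theta^{s_i(\mu)}$ as $\Theta^{s_i(\mu)}(\Theta^{\langle\alpha_i,\mu\rangle \alpha_i}-1)$, divide by $1-\Theta^{-\alpha_i}$ to produce a finite geometric series, and verify the identity case-by-case on the sign and value of $\langle \alpha_i, \mu\rangle$, tracking the interplay between the factors $e^{\pi i \langle \rho, \mu\rangle}$ and the powers of $q$ coming from the $(q-1)$ prefactor.

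For the second part, write $x = \pi^{w(\lambda)}v$ with $v,w \in W$ and $\lambda \in P^+$, and fix a reduced decomposition $v = s_{i_1}\cdots s_{i_r}$. By \eqref{eq:Tbasis-in-Bernstein-generators} we have
\begin{equation*}
T_x = q^{\langle \rho, \lambda \rangle}\, T_{w^{-1}}^{-1}\, \Theta^\lambda\, T_{w^{-1}}\, T_{i_1}^{\eps_1} \cdots T_{i_r}^{\eps_r},
\end{equation*}
with the signs $\eps_k$ given by \eqref{eq:signs-epsilon-k}. Applying $\sign$ term-by-term: the factors $T_{w^{-1}}^{\pm 1}$ contribute canceling $\pm 1$'s, the factor $\Theta^\lambda$ contributes $e^{\pi i \langle \rho, \lambda\rangle}$, and the tail contributes $(-1)^{\eps_1 + \cdots + \eps_r}$. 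Combining these with the formula $\ell(x) = 2\langle\rho,\lambda\rangle + \eps_1 + \cdots + \eps_r$, which is a repeated application of \cite[Lemma 4.18]{Muthiah-2018} exactly as used in the proof of Theorem~\ref{thm:triv-representation}, the image collects into $e^{\pi i \ell(x)}$, as claimed.

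The main obstacle is the verification of Bernstein in the first part. Unlike in the trivial case, where both sides of \eqref{eq:Bernstein} manifestly reduce to equal powers of $q$, here one must handle the formal denominator $1-\Theta^{-\alpha_i}$ (which under $\sign$ evaluates to $1-e^{-\pi i\langle\rho,\alpha_i\rangle} = 2$) by working only with the already-simplified polynomial form on the right. Once this identity is secured, the remainder of the argument proceeds in direct parallel with Theorem~\ref{thm:triv-representation}, with $q$ replaced by $e^{\pi i}$ in the $\Theta$-side contribution; the appearance of roots of unity in the target ring $\ZZ[e^{\pi i/N}][q^{1/N},q^{-1/N}]$ is then exactly the fractional-length phenomenon recorded in \eqref{eq:d:20}.
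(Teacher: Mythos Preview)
Your overall approach mirrors the paper's (which says only that the proof is a slight variation of Theorem~\ref{thm:triv-representation}), but there is a genuine gap in your execution. The map $\sign$ cannot be $\ZZ[q^{\pm 1/N}]$-linear: it must also send $q^{1/N} \mapsto e^{\pi i/N}$, so that $q \mapsto -1$. Without this, the Bernstein relation fails. Concretely, take $\mu$ with $L = \langle \alpha_i, \mu \rangle$ odd and positive. The left side of \eqref{eq:Bernstein} maps to
\[
(-1)\,e^{\pi i \langle \rho,\mu\rangle} - e^{\pi i \langle \rho,\mu\rangle}(-1)^{L}\cdot(-1) \;=\; -2\,e^{\pi i \langle \rho,\mu\rangle},
\]
while the right side, via the geometric-series expansion you describe, maps to $(q-1)\,e^{\pi i \langle \rho,\mu\rangle}\sum_{k=0}^{L-1}(-1)^{k} = (q-1)\,e^{\pi i \langle \rho,\mu\rangle}$. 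These agree only when $q=-1$. Your phrase ``the powers of $q$ coming from the $(q-1)$ prefactor'' suggests you are leaving $q$ untouched, which cannot work.

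The same omission resurfaces in your second part. In the expansion \eqref{eq:Tbasis-in-Bernstein-generators} there is a scalar prefactor $q^{\langle \rho,\lambda\rangle}$ that you never account for. Under $\sign$ (with $q^{1/N}\mapsto e^{\pi i/N}$) it contributes $e^{\pi i \langle \rho,\lambda\rangle}$, and only with this factor does the total exponent become $2\langle \rho,\lambda\rangle + \eps_1+\cdots+\eps_r = \ell(x)$. With just the contributions you list (from $\Theta^\lambda$ and the tail) you obtain $e^{\pi i(\langle \rho,\lambda\rangle + \sum_k \eps_k)}$, which is off from $e^{\pi i \ell(x)}$ by exactly $e^{\pi i \langle \rho,\lambda\rangle}$. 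Once you fix both points---specialize $q^{1/N}\mapsto e^{\pi i/N}$ and track the image of the $q^{\langle \rho,\lambda\rangle}$ prefactor---the remainder of your argument goes through verbatim as in the trivial case.
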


The proof of this theorem is a slight variation of the proof of Theorem \ref{thm:triv-representation}.


\section{Results for Weyl groups}\label{sect:Coxeter-results}

Recall that $W$ is the Weyl group of our Kac-Moody root system. In particular, $W$ is a Coxeter group, and $\cH_W$ is the Hecke algebra of this Coxeter group. It has basis $\{T_w\mid w\in W\}.$ The main result of this section is Theorem \ref{thm:expansion-properties-Coxeter} about structure coefficients of this basis. As outlined in \S \ref{subsect:intro-lengthdefandstructurecoeff} of the Introduction, the statement of this result makes sense in the Kac-Moody affine context, and is the subject of Conjecture \ref{conj:main-conjecture}. A crucial ingredient is the notion of length deficit, which is explained in Proposition \ref{prop:length-deficit-for-coxeter-weyl-groups}. It will be revisited in the Kac-Moody affine setting in Theorem \ref{thm:length-deficit-for-tits-cone-weyl-groups}. 

Given $x,y \in W$, we call the quantity $\ell(x) + \ell(y) - \ell(xy)$ the \emph{length deficit} of the pair $x$ and $y$. The following well-known result relates length deficits to inversion sets.

\begin{Proposition}
  \label{prop:length-deficit-for-coxeter-weyl-groups}
For $x,y \in W$, we have:
  \begin{equation}
    \label{eq:length-deficits-in-coxeter-case}
    \ell(x) + \ell(y) = \ell(xy) + 2 | \Inv(x) \cap \Inv(y^{-1}) |
  \end{equation}
\end{Proposition}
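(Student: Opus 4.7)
The plan is to prove the identity by partitioning the set of positive roots according to how $y$ and $xy$ act on them, and then reading off each inversion set from this partition.

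First I would partition $\Delta^+$ into four subsets indexed by the signs of $y(\beta)$ and $xy(\beta)$:
\begin{align*}
  A &= \{\beta \in \Delta^+ : y(\beta) > 0,\ xy(\beta) > 0\}, \\
  B &= \{\beta \in \Delta^+ : y(\beta) > 0,\ xy(\beta) < 0\}, \\
  C &= \{\beta \in \Delta^+ : y(\beta) < 0,\ xy(\beta) > 0\}, \\
  D &= \{\beta \in \Delta^+ : y(\beta) < 0,\ xy(\beta) < 0\}.
\end{align*}
Directly from the definitions, $\Inv(y) = C \sqcup D$ and $\Inv(xy) = B \sqcup D$, which are finite since $y$ and $xy$ lie in the Coxeter Weyl group $W$.

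Next I would identify $\Inv(x)$ and $\Inv(x) \cap \Inv(y^{-1})$ in terms of the same partition. For $\gamma \in \Delta^+$, set $\beta = y^{-1}(\gamma)$. If $\beta > 0$, then $\beta \in A \cup B$ and $\gamma \in \Inv(x)$ iff $xy(\beta) < 0$, i.e.\ $\beta \in B$; if $\beta < 0$, then $-\beta \in C \cup D$ and $\gamma \in \Inv(x)$ iff $xy(-\beta) > 0$, i.e.\ $-\beta \in C$. Hence the maps $\beta \mapsto y(\beta)$ on $B$ and $\beta \mapsto -y(\beta)$ on $C$ give a bijection $B \sqcup C \xrightarrow{\sim} \Inv(x)$, and in particular $|\Inv(x)| = |B| + |C|$. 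Moreover, the same argument shows that a root $\gamma \in \Inv(x)$ lies in $\Inv(y^{-1})$ precisely when $y^{-1}(\gamma) < 0$, which picks out exactly the image of $C$. This furnishes the bijection $C \xrightarrow{\sim} \Inv(x) \cap \Inv(y^{-1})$, so $|\Inv(x) \cap \Inv(y^{-1})| = |C|$.

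Finally, using $\ell(w) = |\Inv(w)|$ for every $w \in W$, I would compute
\[
  \ell(x) + \ell(y) - \ell(xy) = (|B|+|C|) + (|C|+|D|) - (|B|+|D|) = 2|C| = 2\,|\Inv(x) \cap \Inv(y^{-1})|,
\]
which is exactly the claimed identity.

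There is no serious obstacle here: the finiteness of $\Inv(y)$ and $\Inv(xy)$ in the Coxeter case makes the bookkeeping trivial, and the whole argument reduces to the four-class partition above. The only point that requires mild care is the bijective identification of $\Inv(x)$ with $B \sqcup C$, which becomes the conceptual seed that must be re-examined (and considerably refined) when this identity is lifted to the Kac-Moody affine setting in Theorem \ref{thm:length-deficit-for-tits-cone-weyl-groups}, where inversion sets are no longer finite.
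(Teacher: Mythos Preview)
Your proof is correct. Both your argument and the paper's are root-counting arguments tracking signs under the action of $x$, $y$, and $xy$, so in spirit they are the same; the organization differs slightly. You partition $\Delta^+$ by the signs of $y(\beta)$ and $xy(\beta)$ and read off all four quantities $\ell(x)$, $\ell(y)$, $\ell(xy)$, $|\Inv(x)\cap\Inv(y^{-1})|$ from the block sizes. The paper instead builds a single explicit bijection
\[
F:\ \bigl(\Inv(x)\setminus\Inv(y^{-1})\bigr)\ \sqcup\ \bigl(\Inv(y^{-1})\setminus\Inv(x)\bigr)\ \longrightarrow\ \Inv((xy)^{-1}),
\]
given by $\beta\mapsto -x(\beta)$ on the first piece and $\gamma\mapsto x(\gamma)$ on the second, and then counts. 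Your blocks $B$ and $C$ are, up to applying $y$, exactly the two pieces of the paper's source, so the two arguments are reparametrizations of each other.

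The one thing the paper's packaging buys is that the bijection $F$ (reframed as $G_{x,y}$ in \eqref{eq:d:4}) is precisely the object that gets reused in the Kac-Moody affine proof of Theorem~\ref{thm:length-deficit-for-tits-cone-weyl-groups}: there one composes $G_{x,y}$, $G_{xs,y}^{-1}$, and the Muthiah--Orr bijection $K$ to produce an injection between \emph{infinite} sets whose finite discrepancy yields the result. Your four-block partition encodes the same information, but to plug into \S\ref{ssect:keypart} you would in any case have to extract from it the map $F$; you already anticipate this in your closing remark.
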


\begin{proof}
  This is equivalent to $| \Inv(x)| + |\Inv(y^{-1})|=|\Inv((xy)^{-1})| + 2 | \Inv(x) \cap \Inv(y^{-1}) |$. It therefore suffices to construct a bijection
  \begin{equation}
    \label{eq:d:2}
 F:  \Inv(x) \backslash \Inv(y^{-1})  \sqcup \Inv(y^{-1}) \backslash \Inv(x) \rightarrow \Inv((xy)^{-1}).
  \end{equation}
For $\beta \in \Inv(x) \setminus \Inv(y^{-1})$ set $F(\beta) = - x(\beta)$. Observe that this is a positive root because $\beta \in \Inv(x)$, and also that $(xy)^{-1}( F(\beta)) = - y^{-1}(\beta)$, which is a negative root because $\beta \notin \Inv(y^{-1})$. For $\gamma \in \Inv(y^{-1}) \backslash \Inv(x)$ set $F(\gamma) = x(\gamma)$. This is an element of $\Inv((xy)^{-1})$ by an argument analogous to the above.
  
To prove $F$ is a bijection, let us construct the inverse map. Let $\zeta \in \Inv((xy)^{-1})$. If $x^{-1}(\zeta) < 0$, then define $F^{-1}(\zeta) = -x^{-1}(\zeta) \in \Inv(x) \backslash \Inv(y^{-1})$. If $x^{-1}(\zeta) > 0$, then we define $F^{-1}(\zeta) = x^{-1}(\zeta) \in \Inv(y^{-1}) \backslash \Inv(x)$.
\end{proof}

We now state the main theorem of this section.

\begin{Theorem}\label{thm:expansion-properties-Coxeter}
  Let $x,y\in W$, and let $N = |\Inv(x) \cap \Inv(y^{-1})|$. Consider the expansion 
	\begin{equation}
          \label{eq:expansion-of-Tx-Ty}
	T_x\cdot T_y=\sum_{z \in W}c^{z}_{x,y}T_z
	\end{equation}
        in $\cH_W.$ The following are true.
        \begin{enumerate}
        \item\label{thpart:boundedinBruhat-coxeterexpansionthm}  The coefficient $c^{z}_{x,y}$ is nonzero only if $z \geq xy$ in the Bruhat order.
        \item\label{thpart:lowerboundnumterms-coxeterexpansionthm} The coefficient $c^{xy}_{x,y}=q^{N}$, and $c^{xs_{\beta }y}_{x,y}\neq 0$ for all $\beta \in \Inv(x) \cap \Inv(y^{-1})$. In particular, we have an lower bound on the support of \eqref{eq:expansion-of-Tx-Ty}: 
          \begin{equation}
            \label{eq:lower-bound}
 \left|\left\lbrace z\in W\mid c^{z}_{x,y}\neq 0 \right\rbrace\right| \geq N + 1 
          \end{equation}
        \item \label{thpart:upperboundnumterms-coxeterexpansionthm}We also have an upper bound on the support of \eqref{eq:expansion-of-Tx-Ty}:
	\begin{equation}\label{eq:nonzerocovercoeffs}
	\left|\left\lbrace z\in W\mid c^{z}_{x,y}\neq 0 \right\rbrace\right|           \leq 2^{N}
	\end{equation} 
        \item\label{thpart:degreeboundoncoeffs} For all $z$, $c^z_{x,y}\in \ZZ[q]$. If $c^z_{x,y}\neq 0$ then $\deg c^z_{x,y}\leq N$ with positive leading coefficient, and the lowest degree term of $c^z_{x,y}$ is degree at least $\ell(xy)+N-\ell(z)$ and has sign $(-1)^{\ell(z)-\ell(xy)}$.
        \end{enumerate}
\end{Theorem}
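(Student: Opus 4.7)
The plan is to prove all four parts simultaneously by induction on $\ell(y)$. The base case $\ell(y)=0$ gives $y=e$, $T_xT_y=T_x$, and $N=0$, so every claim is trivial. For the inductive step, I write $y = s_i y'$ with $\ell(y')=\ell(y)-1$ (so that $T_y = T_iT_{y'}$) and split on the sign of $\ell(xs_i) - \ell(x)$.

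In \emph{Case A} ($\ell(xs_i) = \ell(x)+1$), we have $T_xT_y = T_{xs_i}T_{y'}$ with $(xs_i)(y')=xy$. Using $\Inv(xs_i) = \{\alpha_i\}\sqcup s_i\Inv(x)$, $\Inv(y^{-1}) = \{\alpha_i\}\sqcup s_i\Inv((y')^{-1})$, and $\alpha_i\notin\Inv(x)$, I would verify that $s_i$ gives a bijection $\Inv(x)\cap\Inv(y^{-1}) \to \Inv(xs_i)\cap\Inv((y')^{-1})$ (preserving $N$) and that $xs_\beta y = (xs_i)s_{s_i\beta}y'$ for each such $\beta$. All four conclusions transfer directly from the inductive hypothesis applied to $T_{xs_i}T_{y'}$.

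In \emph{Case B} ($\ell(xs_i) = \ell(x)-1$, so $\alpha_i\in\Inv(x)\cap\Inv(y^{-1})$), the quadratic relation yields
\begin{equation*}
T_xT_y = (q-1)\,T_xT_{y'} + q\,T_{xs_i}T_{y'},
\end{equation*}
where $(xs_i)(y')=xy$ has deficit $N-1$ and $xy' = xs_iy = s_{-x(\alpha_i)}(xy)$ has some deficit $M$. Because $(xy)^{-1}(-x(\alpha_i)) = (y')^{-1}(\alpha_i) > 0$, we have $xy' > xy$ in Bruhat, so $\ell(xy') - \ell(xy)$ is a positive odd integer, and Proposition \ref{prop:length-deficit-for-coxeter-weyl-groups} then forces $M \leq N-1$. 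Part 1 and the first claim of Part 2 follow immediately: the support of $T_xT_{y'}$ lies above $xy' > xy$, so $T_{xy}$'s coefficient equals $q\cdot q^{N-1} = q^N$. Part 3 follows from $|\text{supp}(T_xT_y)| \leq 2^M + 2^{N-1} \leq 2^N$. For the second claim of Part 2 with $\beta \neq \alpha_i$, setting $\gamma = s_i\beta$ gives $\gamma\in\Inv(xs_i)\cap\Inv((y')^{-1})$ and $(xs_i)s_\gamma y' = xs_\beta y$, so inductively $c^{xs_\beta y}_{xs_i,y'} \neq 0$; for $\beta = \alpha_i$, $xs_\beta y = xy'$ and the first summand already contributes $(q-1)q^M \neq 0$ from the inductive value $c^{xy'}_{x,y'} = q^M$.

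The heart of the proof is Part 4, from which the non-cancellation needed to complete the second claim of Part 2 follows. From the recursion $c^z_{x,y} = (q-1)c^z_{x,y'} + q\,c^z_{xs_i,y'}$, I would check inductively that each summand has positive leading coefficient in its respective degree (at most $M+1$ and $N$); since $M+1 \leq N$, the sum has degree at most $N$ with positive leading coefficient (either the higher-degree summand dominates, or both leading terms match and sum to a positive). For the lowest-degree bound, each summand is bounded below by $\ell(xy) + N - \ell(z)$ (using $\ell(xy') + M = \ell(xy) + 2N - M - 1 \geq \ell(xy) + N$ since $M \leq N-1$); the key parity identity $\ell(xy') - \ell(xy) \in 1 + 2\ZZ_{\geq 0}$ converts the inductive sign $-(-1)^{\ell(z)-\ell(xy')}$ of the first summand into $(-1)^{\ell(z)-\ell(xy)}$, matching the second summand's sign so the two contributions cannot cancel at any common degree. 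This positive-leading statement immediately implies Part 2's second claim: the sum is nonzero whenever at least one summand is nonzero, which is precisely what we arranged above. The main obstacle is this tight coupling between Parts 2 and 4 - neither can be proved in isolation - and the entire argument hinges on the parity of $\ell(xy') - \ell(xy)$.
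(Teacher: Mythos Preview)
Your proposal is correct and follows essentially the same approach as the paper: induction on $\ell(y)$, writing $y=sy'$ with $\ell(y')=\ell(y)-1$, splitting into the two cases according to whether $xs>x$ or $xs<x$, and in the second case using the recursion $c^z_{x,y}=(q-1)c^z_{x,y'}+q\,c^z_{xs,y'}$ together with the inequality $M\leq N-1$ and the odd parity of $\ell(xy')-\ell(xy)$ to control both the degrees and the signs of lowest-order terms. The paper packages the bookkeeping facts (such as $\cN(xs,y')=s(\cN(x,y))$ in Case~A and $|\cN(x,y')|\leq|\cN(x,y)|-1$, $\ell(xy)+|\cN(x,y)|\leq\ell(xy')+|\cN(x,y')|$ in Case~B) into two auxiliary lemmas, whereas you derive them inline, but the logic is identical---including the crucial observation that Parts~(2) and~(4) must be proved together because the positive-leading-coefficient statement is what prevents cancellation in the recursion.
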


\begin{Remark}\label{rmk:Coxeter-demazure-product}
  We were not able to find in the literature the Bruhat lower bound (Theorem \Ref{thm:expansion-properties-Coxeter}(i)), but there is a well-known Bruhat upper bound. Let $x\ast y$ denote the Demazure product of $x$ and $y$ (see e.g. \cite{kenney2014coxeter}). Then the coefficient $c^{z}_{x,y}$ in \eqref{eq:expansion-of-Tx-Ty} is nonzero only if $z \leq x \star y$ in the Bruhat order. Moreover, the coefficient $c^{x \star y}_{x,y}$ is non-zero. In the specialisation $q=0,$ $c^{x \star y}_{x,y}$ becomes 
  $(-1)^{\ell(x)+\ell(y)}$ and all other terms vanish \cite[p. 721]{kenney2014coxeter}. By contrast, the $q=1$ specialisation of $\cH_W$ is ${\mathbb{Z}} W$ and the only surviving term is $c^{xy}_{x,y}T_{xy}=1\cdot xy.$ We shall revisit this in Section \S\ref{sect:Demazure-product}. 
\end{Remark}

For $x,y\in W$, we introduce the following abbreviated notation:
\begin{equation}\label{eq:def:curlyN}
\cN(x,y)=\Inv(x)\cap\Inv(y^{-1})
\end{equation}

In preparation for the proof, we record a few lemmas. Lemma \ref{lem:ellisN} serves to elucidate the presence of the quantity $|\cN(x,y)|$ in Theorem \ref{thm:expansion-properties-Coxeter} above. Lemmas \ref{lem:upstep-auxiliary} and \ref{lem:downstep-auxiliary} collect simple statements about quantities in the Theorem change when $x$ or $y$ are multiplied by a simple reflection. 

\begin{Lemma}\label{lem:ellisN}
	Let $x,y\in W$ with a reduced decomposition $y=s_{j_1}\cdots s_{j_n}.$ Set $x=x_0,$ $x_i=x_{i-1}s_{j_i}$ for $1\leq i\leq n.$ Then
	\begin{equation}\label{eq:def:el}
	|\{1\leq i\leq n\mid  x_{i-1}>x_i\}|=|\cN(x,y)|.
	\end{equation}
\end{Lemma}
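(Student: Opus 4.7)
The plan is to translate both sides of the claimed identity into statements about where $x$ sends the standard inversion roots attached to the reduced decomposition of $y$, and then observe that they match tautologically.

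First, I would set up the roots associated to the fixed reduced expression $y=s_{j_1}\cdots s_{j_n}$. For $1\le i\le n$, define
\[
\gamma_i \;=\; s_{j_1}s_{j_2}\cdots s_{j_{i-1}}(\alpha_{j_i}).
\]
A standard (and short) induction on $n$ using \eqref{eq:bruhat-ineq-length-ineq} shows that $\gamma_1,\ldots,\gamma_n$ are pairwise distinct positive roots and that the map $i\mapsto \gamma_i$ is a bijection
\[
\{1,\ldots,n\}\;\xrightarrow{\;\sim\;}\;\Inv(y^{-1}).
\]
This is the one background fact I would quote; after that the argument is essentially a rewrite.

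Next, I would rewrite the descent condition $x_{i-1}>x_i$. Since $x_i=x_{i-1}s_{j_i}$, applying \eqref{eq:bruhat-ineq-length-ineq} to $w=x_{i-1}$ and $\beta=\alpha_{j_i}$ gives
\[
x_{i-1}>x_i \;\Longleftrightarrow\; \ell(x_{i-1}s_{j_i})<\ell(x_{i-1}) \;\Longleftrightarrow\; x_{i-1}(\alpha_{j_i})<0.
\]
But by definition $x_{i-1}=x\,s_{j_1}\cdots s_{j_{i-1}}$, so
\[
x_{i-1}(\alpha_{j_i}) \;=\; x(\gamma_i),
\]
and therefore $x_{i-1}>x_i$ is equivalent to $\gamma_i\in \Inv(x)$.

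Finally, I would combine the two equivalences: the cardinality on the left-hand side of \eqref{eq:def:el} equals
\[
|\{1\le i\le n : \gamma_i\in \Inv(x)\}|.
\]
Under the bijection $i\mapsto \gamma_i$ onto $\Inv(y^{-1})$, this is precisely $|\Inv(x)\cap \Inv(y^{-1})|=|\cN(x,y)|$, which is the right-hand side. There is no real obstacle here; the only point to be slightly careful about is confirming that the enumeration $\gamma_1,\ldots,\gamma_n$ of $\Inv(y^{-1})$ (rather than $\Inv(y)$) matches the convention in the paper, but this is immediate from $\gamma_i=s_{j_1}\cdots s_{j_{i-1}}(\alpha_{j_i})$ being sent by $y^{-1}$ to $-s_{j_n}\cdots s_{j_{i+1}}(\alpha_{j_i})<0$.
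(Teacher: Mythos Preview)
Your proof is correct, but it takes a genuinely different route from the paper. The paper argues via a telescoping sum: since $\ell(x_i)-\ell(x_{i-1})=\pm 1$, one has $\ell(xy)-\ell(x)=\ell(y)-2|\{i : x_{i-1}>x_i\}|$, and then Proposition~\ref{prop:length-deficit-for-coxeter-weyl-groups} (the length-deficit formula $\ell(x)+\ell(y)=\ell(xy)+2|\cN(x,y)|$) is invoked to identify the descent count with $|\cN(x,y)|$. You instead give a direct bijective argument: the roots $\gamma_i=s_{j_1}\cdots s_{j_{i-1}}(\alpha_{j_i})$ enumerate $\Inv(y^{-1})$, and the descent $x_{i-1}>x_i$ occurs exactly when $\gamma_i\in\Inv(x)$. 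Your approach is more self-contained---it does not use Proposition~\ref{prop:length-deficit-for-coxeter-weyl-groups} at all, and indeed combining your bijection with the paper's telescoping sum would yield an independent proof of that proposition. The paper's approach is shorter only because Proposition~\ref{prop:length-deficit-for-coxeter-weyl-groups} has already been established.
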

\begin{proof} For $1\leq i\leq n$ we have $\ell(x_i)-\ell(x_{i-1})=\pm 1,$ hence:
	\begin{equation*}
\ell(xy)-\ell(x)=\sum_{i=1}^{n}(\ell(x_i)-\ell(x_{i-1})) =  \ell(y)-2|\{1\leq i\leq n\mid  x_{i-1}>x_i\}|\\
	\end{equation*}	
	The statement then follows from Proposition \ref{prop:length-deficit-for-coxeter-weyl-groups}.
\end{proof}

\begin{Lemma}\label{lem:upstep-auxiliary}
	Let $s=s_{\alpha }\in W$ be a simple reflection and $x,y\in W$ such that $x'=xs,$ $x<x'$ and $y=sy',$ $y'<y.$ Then we have:
	\begin{enumerate}[(a)]
		\item\label{lempart:upstepaux-a} $x'y'=xy$
		\item\label{lempart:upstepaux-b} $\cN(x',y')=s(\cN(x,y))$
		\item\label{lempart:upstepaux-c} For any $\beta$ root one has $xs_{\beta }y=x'ss_{\beta }sy'=x's_{s(\beta )}y'.$
		\item\label{lempart:upstepaux-d} $|\cN(x',y')|=|\cN(x,y)|$ and $\ell(x'y')+|\cN(x',y')|=\ell(xy)+|\cN(x,y)|.$
	\end{enumerate}
\end{Lemma}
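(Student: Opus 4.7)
The strategy is to treat parts (a), (c), and (d) as direct algebraic consequences of the defining identities $x=x's$ and $y=sy'$, and to do the real work in (b) using standard formulas for how inversion sets transform under multiplication by a simple reflection.

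Part (a) is immediate: $xy = x(sy') = (xs)y' = x'y'$. Part (c) follows from the same rewriting together with the standard conjugation identity $s\, s_\beta\, s = s_{s(\beta)}$, namely
\[
x s_\beta y = (x's)\, s_\beta\, (sy') = x'(s\, s_\beta\, s) y' = x' s_{s(\beta)} y'.
\]

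Part (b) is the substantive step. The plan is to apply, on the $x$-side, the standard identity that for $x < xs$ with $s = s_\alpha$ simple,
\[
\Inv(x') = \{\alpha\} \sqcup s(\Inv(x)).
\]
From $y = sy'$ with $y' < y$ one has $y'^{-1} = y^{-1} s$ and $y'^{-1} < y^{-1}$, so applying the same identity to the pair $(y'^{-1}, y'^{-1}s) = (y'^{-1}, y^{-1})$ and solving for $\Inv(y'^{-1})$ yields
\[
\Inv(y'^{-1}) = s\bigl(\Inv(y^{-1}) \setminus \{\alpha\}\bigr).
\]
Intersecting the two, I expect the $\{\alpha\}$ summand on the $x$-side to be killed: $\alpha$ is not in the $y$-factor because $s(\alpha) = -\alpha$ is negative and hence cannot lie in $\Inv(y^{-1}) \setminus \{\alpha\}$. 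Equally, $\alpha \notin \Inv(x)$ because $x < x'$ forces $x(\alpha) > 0$. Consequently
\[
\cN(x', y') \;=\; s(\Inv(x)) \cap s\bigl(\Inv(y^{-1}) \setminus \{\alpha\}\bigr) \;=\; s\bigl(\Inv(x) \cap \Inv(y^{-1})\bigr) \;=\; s(\cN(x,y)),
\]
as claimed.

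Part (d) is then a formality: (a) gives $\ell(x'y') = \ell(xy)$, and (b) combined with the fact that $s$ permutes roots gives $|\cN(x', y')| = |s(\cN(x,y))| = |\cN(x,y)|$, so both equalities in (d) are immediate. The only point requiring any care is verifying in the proof of (b) that $\alpha$ drops out of both factors of the intersection, and this is a short check from the hypotheses $x < x'$ and $y' < y$; I do not anticipate any real obstacle.
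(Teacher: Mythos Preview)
Your proof is correct and follows essentially the same approach as the paper: parts (a), (c), (d) are handled as immediate consequences, and (b) is proved by applying the standard transformation formula for inversion sets under right multiplication by a simple reflection to both $\Inv(x')$ and $\Inv(y'^{-1})$, then intersecting. Your argument is in fact slightly more careful than the paper's: the paper writes $\Inv(x')=s(\Inv(x)\setminus\{\alpha\})$, which appears to be a slip (since $x<x'$ one should have $\Inv(x')=\{\alpha\}\sqcup s(\Inv(x))$, as you write), though the conclusion for (b) is unaffected because $\alpha$ drops out of the intersection either way.
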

\begin{proof}
	The statements in \eqref{lempart:upstepaux-a} and \eqref{lempart:upstepaux-c} are immediate. The conditions imply that $\Inv(x')=s(\Inv(x)\setminus \{\alpha \})$ and  $\Inv(y'^{-1})=s(\Inv(y^{-1})\setminus\{\alpha \})$ (cf. \eqref{eq:def-Bruhat-classical}) whence \eqref{lempart:upstepaux-b} follows. Then \eqref{lempart:upstepaux-d} follows from \eqref{lempart:upstepaux-a} and \eqref{lempart:upstepaux-b}.
\end{proof}

\begin{Lemma}\label{lem:downstep-auxiliary}
	Let $s=s_{\alpha }$ be a simple reflection and $x,y\in W$ such that $x'=xs,$ $x>x'$ and $y=sy',$ $y'<y.$ Then we have:
	\begin{enumerate}[(a)]
		\item\label{lempart:downstepaux-a} $xy=x'y'<xy'$ and $2\nmid \ell(xy')-\ell(x'y')$
		\item\label{lempart:downstepaux-b} $\cN(x,y)=s(\cN(x',y'))\cup \{\alpha\}$ 
		\item\label{lempart:downstepaux-c} $xy'=xs_{\alpha}y,$ for $\beta \in \cN(x,y)\setminus \{\alpha \}$ one has $xs_{\beta }y=x'ss_{\beta }sy'=x's_{s(\beta )}y'$
		\item\label{lempart:downstepaux-d} $\ell(xy)+|\cN(x,y)|=\ell(x'y')+|\cN(x',y')|+1\leq \ell(xy')+|\cN(x,y')|$
		\item\label{lempart:downstepaux-e} $|\cN(x,y')|\leq |\cN(x,y)|-1=|\cN(x',y')|$
	\end{enumerate}
\end{Lemma}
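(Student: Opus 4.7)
The plan is to deduce all five parts from two classical Coxeter-theoretic inputs: (i) the identity $\Inv(ws) = \{\alpha\} \sqcup s(\Inv(w))$ whenever $ws > w$ for a simple reflection $s = s_\alpha$ (and its $sw$ analogue); and (ii) the length deficit formula of Proposition \ref{prop:length-deficit-for-coxeter-weyl-groups}. The hypotheses $x > xs = x'$ and $sy' = y > y'$ immediately give $x(\alpha) < 0$ and $y^{-1}(\alpha) = -y'^{-1}(\alpha) < 0$, so $\alpha \in \cN(x,y)$; this $\alpha$ will be the distinguished element appearing throughout.

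I would begin with (a). The equality $xy = x(sy') = (xs)y' = x'y'$ is formal. For the strict inequality, compute $(xy)^{-1}(xy') = y^{-1}y' = y^{-1}sy = s_\beta$ with $\beta = -y^{-1}(\alpha) > 0$, so $xy' = (xy)\, s_\beta$. Since $xy(\beta) = -x(\alpha) > 0$, the Bruhat criterion \eqref{eq:def-Bruhat-classical} gives $xy' > xy$. The oddness of $\ell(xy') - \ell(x'y') = \ell(xy') - \ell(xy)$ follows from the sign character $w \mapsto (-1)^{\ell(w)}$ being multiplicative and sending every real reflection to $-1$.

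Parts (b) and (c) then fall out of direct computation. For (b), combine $\Inv(x) = \{\alpha\} \sqcup s(\Inv(x'))$ with $\Inv(y^{-1}) = \{\alpha\} \sqcup s(\Inv(y'^{-1}))$ (both instances of input (i), valid because $x = x's > x'$ and $y^{-1} = y'^{-1}s > y'^{-1}$) and distribute the intersection. For (c), the identity $s_\alpha y = sy = y'$ yields $xs_\alpha y = xy'$ at once, and for $\beta \in \cN(x,y) \setminus \{\alpha\}$ one writes $xs_\beta y = xs \cdot s s_\beta s \cdot y' = x' s_{s(\beta)} y'$ using the conjugation $s s_\beta s = s_{s(\beta)}$.

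Finally, (d) and (e) are obtained by combining the above with Proposition \ref{prop:length-deficit-for-coxeter-weyl-groups}. The equality in (d) uses $\ell(xy) = \ell(x'y')$ from (a) and $|\cN(x,y)| = |\cN(x',y')| + 1$ from (b); the second equality in (e) is the latter rewritten. For the inequality $|\cN(x,y')| \leq |\cN(x',y')|$ in (e), the length deficit formula applied to both $\ell(xy')$ and $\ell(x'y')$ yields $\ell(xy') - \ell(x'y') = 1 + 2(|\cN(x',y')| - |\cN(x,y')|)$, and since this difference is at least $1$ by (a), we conclude $|\cN(x',y')| \geq |\cN(x,y')|$. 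The inequality in (d) is the same identity rearranged, using (e) to absorb a remaining nonnegative term. No single step presents a serious obstacle; the lemma is essentially careful bookkeeping of inversion sets under the simple-reflection moves, drawing only on standard Coxeter theory together with Proposition \ref{prop:length-deficit-for-coxeter-weyl-groups}.
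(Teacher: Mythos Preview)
Your proposal is correct and follows essentially the same approach as the paper: part (a) via the Bruhat criterion applied to $xy' = (xy)s_{\beta}$ with $\beta = y'^{-1}(\alpha) = -y^{-1}(\alpha)$, part (b) via the standard decomposition of inversion sets under right-multiplication by a simple reflection, part (c) as a formal computation, and parts (d), (e) by subtracting two instances of Proposition~\ref{prop:length-deficit-for-coxeter-weyl-groups} and invoking (a). The only cosmetic difference is that the paper derives the inequality of (d) first and then (e), whereas you derive (e) first and deduce (d) from it; the underlying algebra is the same.
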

\begin{proof}
	In \eqref{lempart:downstepaux-a} $xy=x'y'$ is clear, and we have $x'y'=xy'\cdot y'^{-1}sy'=xy'\cdot s_{y'^{-1}\alpha }$. The statement then follows from \eqref{eq:def-Bruhat-classical}, since $y'^{-1}\alpha >0$ and $xy'(y'^{-1}\alpha )=x(\alpha )<0.$
Part \eqref{lempart:downstepaux-b} is true because the conditions imply that $\Inv(x')=s(\Inv(x)\setminus\{\alpha \})$ and $\Inv(y'^{-1})=s(\Inv(y^{-1})\setminus\{\alpha \}).$ 
Part \eqref{lempart:downstepaux-c} is clear. 
In part \eqref{lempart:downstepaux-d}, the equality follows from \eqref{lempart:downstepaux-b}. The inequality follows from
	\begin{equation*}
	\begin{split}
	\ell(xy')+|\cN(x,y')|=&\frac{1}{2}(\ell(x)+\ell(y')+\ell(xy'))\\
	=&\ell(x'y')+|\cN(x',y')|+\frac{1+\ell(xy')-\ell(x'y')}{2}\\
	\geq & \ell(x'y')+|\cN(x',y')|+\frac{1+1}{2}
	\end{split}
	\end{equation*}
	using Proposition \ref{prop:length-deficit-for-coxeter-weyl-groups} and \eqref{lempart:downstepaux-a}.
	Part (e) follows from part (b) and
	\begin{equation*}
\begin{split}
2|\cN(x,y')|=&\ell(x)+\ell(y')-\ell(xy')\\
=&(\ell(x,y)+\ell(y)-\ell(xy))-2+(\ell(xy)+1-\ell(xy'))\\
\leq &2|\cN(x,y)|-2
\end{split}
\end{equation*}
using Proposition \ref{prop:length-deficit-for-coxeter-weyl-groups} and part (a).
\end{proof}

\subsubsection{Proof of Theorem \ref{thm:expansion-properties-Coxeter}}
The proof is by induction on the length of $y.$ The statement is clear for $y=1_W.$ If $\ell(y)=1,$ i.e. $y=s=s_{\alpha }$ is a simple reflection, then we have: 
\begin{equation}\label{eq:basecase}
T_xT_s=\left\lbrace \begin{array}{ll}
T_{xs}\cdot T_s^2=T_{xs}\cdot ((q-1)T_s+q)=q^1\cdot T_{xy}+(q-1)T_{xs_{\alpha}y} & \text{if }x>xs\\
T_{xs}=q^0\cdot T_{xy}& \text{if }x<xs
\end{array}\right.
\end{equation}
Observe that if $x>xs$ then $\cN(x,y)=\{\alpha \},$ $N=1.$ If $x<xs$ then $\cN(x,s)=\emptyset,$ $N=0.$ In both cases, all statements of the theorem are true for the pair $(x,y).$

Now let us assume that the statement is true with the factor on the right having length at most $\ell(y)-1.$ Let $y=sy'$ with $\ell(y')=\ell(y)-1$ and $s=s_{\alpha }$ a simple reflection. Thus $T_y=T_sT_{y'}.$ Consider any $x.$ We have either $\ell(xs)>x$ or $\ell(xs)<x.$

If $\ell(xs)>x$ then we have $T_xT_y=T_xT_sT_{y'}=T_{xs}T_{y'}.$ Setting $x'=xs>x$ and $y'=sy<y$ we have $c^z_{x,y}=c^z_{x',y'}.$ The statements of the theorem then all follow from the inductive hypothesis for the pair $(x',y')$ and the statements of Lemma \ref{lem:upstep-auxiliary}.

If $\ell(xs)<x$ then let us write $x'=xs<x$ and $y'=sy<y,$ i.e. $x,y$ satisfy the conditions of Lemma \ref{lem:downstep-auxiliary}. We have 
\begin{equation}
T_xT_y=T_{x'}T_s^2T_{y'}=T_{x'}((q-1)T_s+q)T_{y'}=(q-1)T_{x}T_{y'}+qT_{x'}T_{y'}
\end{equation}
and furthermore by the inductive hypothesis:
\begin{equation}\label{eq:downto_y'products}
T_xT_y=(q-1)\sum_{z\geq xy'}c^{z}_{x,y'}T_z +q\sum_{z\geq x'y'}c^{z}_{x',y'}T_z
\end{equation}
It follows that 
\begin{equation}\label{eq:recursivecoeff-downstep}
c^z_{x,y}=(q-1)\cdot c^z_{x,y'}+q\cdot c^z_{x',y'}.
\end{equation}
We now check that the statements of the theorem are satisfied by the inductive hypothesis on the pairs $(x,y')$ and $(x',y')$ and parts of Lemma \ref{lem:downstep-auxiliary}. 

Indeed part \eqref{lempart:downstepaux-a} and $xy=x'y'<xy'$ implies that $c^z_{x,y}=0$ unless $z\geq xy.$ For $z=xy$ the only contribution comes from the right-hand sum, and so 
$$c^{xy}_{x,y}=q\cdot c^{x'y'}_{x',y'}=q^{1+|\cN(x',y')|}=q^{|\cN(x,y)|}.$$

By part \eqref{lempart:downstepaux-e} of the Lemma we have:
$$\left|\left\lbrace z\in W\mid c^{z}_{x,y}\neq 0 \right\rbrace\right|\leq 2^{|\cN(x,y')|}+2^{|\cN(x',y')|}\leq 2^{|\cN(x,y)|-1}+2^{|\cN(x,y)|-1}=2^{|\cN(x,y)|}$$

Observe that nonzero polynomials satisfying the conditions on $c^z_{x,y}$ (for any pair) have positive leading coefficient. In particular \eqref{eq:recursivecoeff-downstep} implies that $c^z_{x,y}$ is nonzero if at least one of $c^z_{x,y'}$ or $c^z_{x',y'}$ does not vanish. Let $\beta \in \cN(x,y).$ 
By part \eqref{lempart:downstepaux-b} of the Lemma either $\beta=\alpha $ or $s(\beta)\in \cN(x',y').$ Therefore by part \eqref{lempart:downstepaux-c} of the Lemma $c^{xs_\beta y}_{x,y}$ does not vanish: indeed $c^{xs_{\alpha}y}_{x,y'}\neq 0$ and $c^{xs_{\beta}y}_{x',y'}\neq 0$ for $s(\beta)\in \cN(x',y').$

Now let us examine the finer properties of the polynomials $c^z_{x,y}.$ The leading coefficient of both $c^z_{x,y'}$ and $c^z_{x',y'}$ is positive, and by part \eqref{lempart:downstepaux-e} of the Lemma $\deg c^z_{x,y'}\leq |\cN(x,y')|\leq |\cN(x,y)|-1,$ and $\deg c^z_{x',y'}\leq |\cN(x',y')|=|\cN(x,y)|-1.$ It follows that 
$$\deg c^z_{x,y}=1+\max (\deg c^z_{x,y'},\deg c^z_{x,y'})\leq |\cN(x,y)|$$
and the leading coefficient of $c^z_{x,y}$ is positive. 

By part \eqref{lempart:downstepaux-d} and \eqref{lempart:downstepaux-a} of the Lemma if $c^z_{x,y'}\neq 0$ then its lowest degree term has degree at least $\ell(xy')+|\cN(x,y')|-\ell(z)\geq \ell(xy)+|\cN(x,y)|-\ell(z)$ and has sign $(-1)^{\ell(z)-\ell(xy')}=-(-1)^{\ell(z)-\ell(xy)}.$ Similarly if $c^z_{x',y'}\neq 0$ then its lowest degree term has degree at least $\ell(x'y')+|\cN(x',y')|-\ell(z)= -1+\ell(xy)+|\cN(x,y)|-\ell(z)$ and has sign $(-1)^{\ell(z)-\ell(x'y')}=(-1)^{\ell(z)-\ell(xy)}.$ It follows from \eqref{eq:recursivecoeff-downstep} that if at least one of these coefficients is nonzero, then $c^z_{x,y}\neq 0$ and the lowest degree term of $c^z_{x,y}$ has degree at least $\ell(xy)+|\cN(x,y)|-\ell(z)$ and has sign $(-1)^{\ell(z)-\ell(xy)}.$
\qed

\subsection{Special case of length deficit 0, 2, 4}

Let $x,y \in W$. We consider the special cases where $x$ and $y$ have length deficits $0$, $2$, and $4$.

\begin{Example}\label{example:lengthdeficit-0-Coxeter}
  If $x$ and $y$ have length deficit $0$, then $\Inv(x)\cap \Inv(y)=\emptyset$. In this case, Theorem \ref{thm:expansion-properties-Coxeter} tells us that:
  \begin{align}
   T_xT_y=T_{xy} 
  \end{align}
\end{Example}

\begin{Example}\label{example:lengthdeficit-1-Coxeter}
 If $x$ and $y$ have length deficit $2$, then $\Inv(x)\cap \Inv(y)= \{\beta \}$ for some positive root $\beta$. In this case, Theorem \ref{thm:expansion-properties-Coxeter} tells us that $T_xT_y= c T_{x s_\beta y} +  q T_{xy}$ 
 for some coefficient $c \in \ZZ[q]$. We can compute the coefficient $c$ using Remark \ref{rmk:Coxeter-demazure-product} as follows. We have $xs_{\beta }y>xy,$ so the Demazure product is $x \star y = x s_\beta y.$ By part \eqref{thpart:degreeboundoncoeffs} of Theorem \ref{thm:expansion-properties-Coxeter}, we know that $c$ has degree at most $1$ in $q,$ its leading coefficient is positive and its lowest degree term in $q$ is negative. So $c = a q + b$ for some $a,b \in \ZZ$ with $a >  0$ and $b<0$. Since the value of $c$ is $0$ at $q=1$ and $\pm 1$ at $q=0,$ we get $a=-b=1,$ so $c=q-1.$ So in this case:
 \begin{align}
   \label{eq:length-deficit-two-T-basis-product}
  T_xT_y= (q-1) T_{x s_\beta y} +  q T_{xy}
 \end{align}
\end{Example}

\begin{Example}\label{example:lengthdeficit-2-Coxeter}
  If $x$ and $y$ have length deficit $4$, then Theorem \ref{thm:expansion-properties-Coxeter} implies that the support of $T_{x}T_y$ may have $3$ or $4$ elements. Both possibilities can in fact occur, as we illustrate below. Let $W$ be the Weyl group of $SL_3$ (the symmetric group on three letters), and $s_1$ and $s_2$ the simple reflections.

For a pair with a support of $3$ elements, take $x = s_1 s_2$ and $y = s_2 s_1.$ Note $xy=1_W.$ We compute:  
  \begin{equation}
    \label{eq:length-deficit-2-support-3}
    T_{s_1 s_2} T_{s_2 s_1} = (q-1) T_{s_1 s_2 s_1} + q(q - 1) T_{s_1} + q^2T_{1_W} 
  \end{equation}
 For a pair with a support of $4$ elements, take $x = s_1 s_2$ and $y = s_1 s_2 s_1 = s_2 s_1 s_2$. Note $xy=s_2.$ We compute:
\begin{equation}
  \label{eq:length-deficit-2-support-4}
 T_{s_1s_2} T_{s_1s_2s_1} = (q-1)^2 T_{s_1 s_2 s_1} + q(q-1) T_{s_2 s_1} q(q-1) T_{s_1 s_2} + q^2 T_{s_2}
\end{equation} 

\end{Example}


\section{Length deficits in the Kac-Moody affine setting}\label{sect:KMaffine-lengtdef}

The main result of the section is Theorem \ref{thm:length-deficit-for-tits-cone-weyl-groups}. This is the Kac-Moody affine generalization of Proposition \ref{prop:length-deficit-for-coxeter-weyl-groups}. The proof however is much more subtle, since the inversion sets of elements of $\WTits$ are usually infinite, and the relationship between lengths and inversion sets is more delicate.

\subsection{Finite intersections of inversions sets}

\begin{Theorem}
	\label{thm:finiteness-of-invx-intersect-invyinverse}
	Let $x,y \in W_\cT$. Then $\Inv(x) \cap \Inv(y^{-1})$ is a finite set.
\end{Theorem}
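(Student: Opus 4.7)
The plan is to fix a positive real root $\beta \in \Delta^+$ and analyze, for each such $\beta$, the set of $n \in \ZZ$ for which $\beta[n]$ lies in $\Inv(x) \cap \Inv(y^{-1})$. Writing $x = \pi^\mu w$ and $y = \pi^\nu v$ with $\mu, \nu \in \cT$ and $w, v \in W$, we have $y^{-1} = \pi^{-v^{-1}(\nu)} v^{-1} \in W_P$, so that the action \eqref{eq:Weylaction-G-affine} and the inversion set $\Inv(y^{-1}) \subseteq \afDp$ still make sense. It will suffice to prove that for all but finitely many $\beta$ the set of such $n$ is empty, and that for the exceptional $\beta$ it is finite.

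The first step will be a routine case analysis using \eqref{eq:Weylaction-G-affine}, organized according to the sign of $w(\beta)$ and of $n$. The output is very clean: when $\beta \notin \Inv(w)$, so that $w(\beta) \in \Delta^+$, the set $\{n \in \ZZ : \beta[n] \in \Inv(x)\}$ is controlled entirely by the integer $a := \langle w^{-1}(\mu), \beta\rangle = \langle \mu, w(\beta)\rangle$, equaling $\emptyset$ if $a = 0$, the positive interval $[0, |a|-1]$ if $a < 0$, and the negative interval $[-a, -1]$ if $a > 0$. Running the analogous analysis for $y^{-1}$, when $\beta \notin \Inv(v^{-1})$ the set $\{n : \beta[n] \in \Inv(y^{-1})\}$ is likewise controlled by $b := \langle\nu, \beta\rangle$. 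Intersecting, one sees that for $\beta \notin \Inv(w) \cup \Inv(v^{-1})$ the set $\{n : \beta[n] \in \Inv(x) \cap \Inv(y^{-1})\}$ is empty unless $ab < 0$, in which case it has exactly $\min(|a|, |b|)$ elements.

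The remaining ingredient is the classical finiteness property of the Tits cone: for any $\lambda \in \cT$, the set $\{\gamma \in \Delta^+ : \langle\lambda, \gamma\rangle < 0\}$ is finite. Indeed, writing $\lambda = u(\kappa)$ with $u \in W$ and $\kappa \in P^+$, dominance of $\kappa$ gives $\langle\kappa, \delta\rangle \geq 0$ for every $\delta \in \Delta^+$, so the inequality $\langle\lambda,\gamma\rangle = \langle\kappa, u^{-1}(\gamma)\rangle < 0$ forces $u^{-1}(\gamma) \in -\Delta^+$, placing $\gamma$ in the finite set $\Inv(u^{-1})$. Applied with $\lambda = w^{-1}(\mu) \in \cT$ and $\lambda = \nu \in \cT$ (both lying in $\cT$ since $\cT$ is $W$-stable), this shows that $\{\beta : a < 0\}$ and $\{\beta : b < 0\}$ are finite, and hence so is $\{\beta : ab < 0\}$. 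Combined with the finiteness of $\Inv(w) \cup \Inv(v^{-1})$, in which each $\beta$ contributes only finitely many $n$'s since both of the $n$-sets are visibly bounded, this forces $\Inv(x) \cap \Inv(y^{-1})$ to be finite.

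The main obstacle will be keeping the case analysis organized: for each of $\Inv(x)$ and $\Inv(y^{-1})$, four sub-cases arise from the signs of the relevant Weyl translate and of $n$, and one must verify that they assemble into the clean interval description above. Once that uniform picture is extracted, the Tits cone input does all the remaining work, and moreover sets up the precise count $|\Inv(x) \cap \Inv(y^{-1})| = \sum_\beta \min(|a|,|b|)$ needed for the length deficit formula of Theorem \ref{thm:length-deficit-for-tits-cone-weyl-groups}.
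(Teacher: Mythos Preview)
Your argument is correct and is essentially the same as the paper's: both decompose $\Inv(x)$ and $\Inv(y^{-1})$ fiberwise over $\beta\in\Delta^+$, observe that for $\beta$ outside a finite exceptional set the $n$-intervals lie on opposite sides of zero (the paper phrases this by citing \cite[(24)]{Muthiah-Orr-2019} to write each inversion set as a finite piece union $\{\eta[m]:m<0,\ldots\}$ resp.\ $\{\eta[m]:m\ge0,\ldots\}$), and control the exceptional $\beta$'s via Tits-cone finiteness. One small caution: your closing formula $|\Inv(x)\cap\Inv(y^{-1})|=\sum_\beta \min(|a|,|b|)$ is only literally valid for $\beta\notin\Inv(w)\cup\Inv(v^{-1})$, so if you intend to use it toward Theorem~\ref{thm:length-deficit-for-tits-cone-weyl-groups} you will need to track the corrections from those finitely many $\beta$.
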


\begin{proof}
	Write $x = v \pi^\lambda$ and $y = \pi^\mu w $ for $v,w \in W$ and $\lambda, \mu \in \cT$. The assumption $\lambda, \nu \in \cT$ implies that there are finitely many $\eta\in \Delta^+$ such that $\innprod{\eta }{\mu}<0$ or $\innprod{\eta}{-\lambda }>0.$ Since $\Inv(v),$ $\Inv(w)$ are also finite, by \cite[Equation 24]{Muthiah-Orr-2019} we may write
	\begin{align}
	\label{eq:d:14}
	\Inv( v \pi^{\lambda}) = \cF_x \cup \left\{  \eta[m] \suchthat \eta \in \Delta^+,  \langle - \lambda, \eta \rangle \leq m < 0, \eta \notin \Inv(v) \right\}
	\end{align}
	and 
	\begin{align}
	\label{eq:d:17}
	\Inv( w^{-1} \pi^{-\mu}) = \cF_y \cup \left\{  \eta[m] \suchthat \eta \in \Delta^+,   0 \leq m < \langle \mu, \eta \rangle, \eta \notin \Inv(w^{-1}) \right\}
	\end{align}
	where $\cF_x$ and $\cF_y$ are finite sets. The theorem follows by comparing the right-hand sides of \eqref{eq:d:14} and \eqref{eq:d:17} to see $\Inv(x)\setminus\cF_x$ and $\Inv(y^{-1})\setminus \cF_y$ are disjoint. 
\end{proof}

\subsection{Statement and outline of the proof}

We are now ready to state the theorem. 

\begin{Theorem}
\label{thm:length-deficit-for-tits-cone-weyl-groups}
          Let $x,y \in W_\cT$.
  \begin{equation}
    \label{eq:length-deficit-kac-moody-tits}
    \ell(x) + \ell(y) = \ell(xy) + 2 | \Inv(x) \cap \Inv(y^{-1}) |
  \end{equation}
\end{Theorem}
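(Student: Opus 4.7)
The plan is to follow the template of Proposition \ref{prop:length-deficit-for-coxeter-weyl-groups}: construct the Kac-Moody affine analog of the bijection
\[
F\colon \bigl(\Inv(x)\setminus\Inv(y^{-1})\bigr)\sqcup\bigl(\Inv(y^{-1})\setminus\Inv(x)\bigr)\longrightarrow\Inv((xy)^{-1})
\]
defined by $F(\beta[n]) = -x(\beta[n])$ on the first summand and $F(\gamma[m]) = x(\gamma[m])$ on the second. First I would verify that $F$ is a well-defined bijection onto $\Inv((xy)^{-1})$; the argument transfers verbatim from the Coxeter case since it uses only the defining relations of inversion sets and the $W_P$-action on $\afD$. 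The inverse is built exactly as before, by splitting $\Inv((xy)^{-1})$ according to the sign of $x^{-1}(\zeta)$.

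The subtlety in the Kac-Moody affine setting is that all three inversion sets appearing in $F$ are typically infinite, so the bijection alone does not yield the length identity (unlike in the Coxeter case, we no longer have $\ell = |\Inv|$). To extract the identity, I would compute $\ell(x)+\ell(y)-\ell(xy)$ directly from the length formula \eqref{eq:def-length-doubleaffine}. Writing $x=\pi^\mu w$ and $y=\pi^\nu v$ so that $xy=\pi^{\mu+w(\nu)}(wv)$, and using $\langle\rho,\nu-w(\nu)\rangle=\sum_{\alpha\in\Inv(w)}\langle\alpha,\nu\rangle$, the difference rearranges into a global term $2\sum_{\alpha\in\Inv(w)}\langle\alpha,\nu\rangle$ plus a finite sum over $\eta\in\Delta^+$ of local differences among the four-case summands of \eqref{eq:def-length-doubleaffine} (finite because each summand vanishes outside a finite set of $\eta$).

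In parallel, I would describe $\Inv(x)\cap\Inv(y^{-1})$ explicitly via the bulk/finite decomposition from the proof of Theorem \ref{thm:finiteness-of-invx-intersect-invyinverse}. Because the bulk parts of $\Inv(x)$ and $\Inv(y^{-1})$ are disjoint, the intersection consists of finitely many $\beta[n]$ indexed by $\beta\in\Delta^+$, with $n$ constrained by the signs of $\langle\beta,\mu\rangle$, $\langle\beta,\nu\rangle$ and by memberships in $\Inv(w)$, $\Inv(v^{-1})$, $\Inv((wv)^{-1})$. Stratifying both the above computation of $\ell(x)+\ell(y)-\ell(xy)$ and the intersection by these same data, one should verify case-by-case that they match, up to the factor of two in \eqref{eq:length-deficit-kac-moody-tits}. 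The bijection $F$ provides the conceptual matching: a bulk element of $\Inv(x)$ indexed by $\eta$ should map to a bulk element of $\Inv((xy)^{-1})$ indexed by $w(\eta)$, so the infinite bulk contributions to $\ell(x)$ and $\ell(xy)$ cancel, leaving only the contribution from the finite overlap.

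The main obstacle I expect is the bookkeeping in this case analysis: although there are only finitely many combinatorial types, they are coupled through several sign and membership conditions, and one must keep track of how $F$ permutes the strata between the three inversion sets. A secondary concern is that the length formula attaches weights to bulk roots depending on both pairings and finite Weyl inversion sets, so matching the strata through $F$ requires verifying that the weight shifts are compensated by the corresponding shifts in $\Inv(w),\Inv(v^{-1}),\Inv((wv)^{-1})$. I expect the calculation to be tractable but tedious, essentially a Kac-Moody affine upgrade of the inclusion-exclusion argument used in the Coxeter setting.
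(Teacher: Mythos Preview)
Your plan differs substantially from the paper's route. The paper does \emph{not} attempt to match the length formula \eqref{eq:def-length-doubleaffine} directly against an explicit description of $\Inv(x)\cap\Inv(y^{-1})$. Instead it argues by induction. Writing $x=\pi^{\lambda}w_x$ and $y=w_y\pi^{\mu}w$ with $\lambda,\mu$ in the same Weyl chamber, the base case $w_x=w_y=1$ is handled directly (Proposition~\ref{prop:startcase-samechamber-onetransl}); this is essentially your proposed computation, but in a situation where Lemma~\ref{lem:simpleintersect} shows the intersection lies entirely in $\Delta^+$, so the case analysis is short. The inductive step multiplies $x$ on the right or $y$ on the left by a simple reflection $s$, and by a symmetry argument reduces to the single identity of Proposition~\ref{prop:thekeyprop},
\[
2|\Inv(xs)\cap\Inv(y^{-1})|-2|\Inv(x)\cap\Inv(y^{-1})| = 1 - |\Inv^{++}_{xy}(y^{-1}sy)|,
\]
whose right-hand side is precisely what the Muthiah--Orr length-difference formula \cite[Theorem~3.4]{Muthiah-Orr-2019} supplies. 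To prove this identity, the paper takes your bijection $F$ (repackaged as $G_{x,y}$), composes it with the Muthiah--Orr bijection $K:\Inv((xsy)^{-1})\overset{\sim}{\to}\Inv((xy)^{-1})\sqcup\Inv^{++}_{xy}(y^{-1}sy)$, shows that the composite stabilizes a cofinite subset $\cS\subseteq\Inv((xy)^{-1})$, and then counts on the finite complement.

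Your direct approach may well go through, and if so it would sidestep the $\Inv^{++}$ machinery entirely. But note that the bijection $F$ plays no real role in your argument as you have written it: once you commit to expanding both sides via \eqref{eq:def-length-doubleaffine} and stratifying over $\eta\in\Delta^+$, the verification becomes a bookkeeping exercise independent of $F$. Note also that the stratification for $\ell(xy)$ involves $\langle\eta,\mu+w(\nu)\rangle=\langle\eta,\mu\rangle+\langle w^{-1}(\eta),\nu\rangle$ and membership in $\Inv((wv)^{-1})$, so the cases are coupled through $w^{-1}(\eta)$ rather than $\eta$ alone; this is where the direct computation gets heavier than the same-chamber base case. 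The paper's approach uses $F$ essentially, to convert an identity between infinite sets into one between finite complements. What you would gain is self-containedness; what you lose is the structural explanation, and you take on in full generality a case analysis that the paper only has to do in the tame base case.
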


The key portion of the proof is Proposition \ref{prop:thekeyprop} in \S \ref{ssect:keypart}. This is a statement involving a type of set defined in \eqref{eq:def:Inv++}, and one that appears in \cite[Theorem 3.14]{Muthiah-Orr-2019}. Indeed the proof uses techniques very similar to those in {\emph{op. cit.}} to carefully analyse the relationship of inversion sets and adapt the proof of Proposition \ref{prop:length-deficit-for-coxeter-weyl-groups} to the Kac-Moody affine setting. 

We explain the structure of this section. The proof of Theorem \ref{thm:length-deficit-for-tits-cone-weyl-groups} is by induction. We verify the statement directly for a large class of pairs $x,y \in W_{\cT}$ in \S \ref{ssect-basecases}. The induction is on the length sum of two (Coxeter Weyl) elements associated to the pair $x,y$. This is outlined in \S \ref{ssect:induction-layout}. One seemingly has to distinguish between cases, which are summarised in \S \ref{ssect:ascentdescentcases}. However, a standard symmetry argument (see \S \ref{ssect:symmetry}) allows one to reduce to just a single case. Finally, the proof of Proposition \ref{prop:thekeyprop} in \ref{ssect:keypart} contains the core of the argument.

\subsection{Base cases}\label{ssect-basecases}

The statement of Theorem \ref{thm:length-deficit-for-tits-cone-weyl-groups} can be directly checked for pairs $x=\pi^{\lambda}, y=\pi^{\mu }w \in W_\cT$ such that $\lambda ,\mu$ lie in the same Weyl chamber. In this case, their inversion sets intersect only in $\Delta ^+,$  i.e. only in single-affine roots. 

\begin{Lemma}\label{lem:simpleintersect}
Let $x=\pi^{\lambda}, y=\pi^{\mu }w \in W_\cT$ be elements such that $\lambda$ and $\mu$ lie in the same Weyl chamber. Then 
\begin{equation}\label{eq:simpleintersectinvset}
\Inv(x) \cap \Inv(y^{-1})=\{\eta \in \Delta ^+\mid \eta \in \Inv(w^{-1}), \langle \eta, \lambda \rangle <0, \langle \eta, \mu \rangle =0\}
\end{equation}
\end{Lemma}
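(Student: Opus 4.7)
The plan is to compute $\Inv(\pi^\lambda)$ and $\Inv(y^{-1}) = \Inv(\pi^{-w^{-1}(\mu)}w^{-1})$ explicitly from the action formula \eqref{eq:Weylaction-G-affine}, intersect them, and use the same-Weyl-chamber hypothesis on $\lambda$ and $\mu$ to collapse the intersection to its classical ($n=0$) part. The core calculation is a straightforward case-split on the sign of the level $n$; the chamber hypothesis is exactly what rules out all contributions at $n \neq 0$.

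First I would check the membership criterion for $\beta[n] \in \Inv(\pi^\lambda)$. Writing $\beta[n] = \sgn(n)\beta + |n|\pi$ and applying $\pi^\lambda$ via \eqref{eq:Weylaction-G-affine}, a direct case-split on the sign of $n$ yields: $\beta[n] \in \Inv(\pi^\lambda)$ iff either $n \geq 0$ and $\langle \lambda, \beta\rangle \leq -n-1$ (so in particular $\langle \lambda, \beta\rangle < 0$), or $n < 0$ and $\langle \lambda, \beta\rangle \geq |n|$ (so in particular $\langle \lambda, \beta\rangle > 0$). Writing $y^{-1} = \pi^{-w^{-1}(\mu)} w^{-1}$ and applying the same formula, the analogous criterion for $\Inv(y^{-1})$ further splits by whether $\beta \in \Inv(w^{-1})$, but the upshot is the same: $\beta[n] \in \Inv(y^{-1})$ forces $\langle \mu, \beta\rangle > 0$ when $n \geq 1$ and $\langle \mu, \beta\rangle < 0$ when $n \leq -1$.

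The same-Weyl-chamber hypothesis produces a $u \in W$ with $u^{-1}\lambda, u^{-1}\mu \in P^+$, so for every $\beta \in \Delta^+$ the values $\langle \lambda, \beta\rangle = \langle u^{-1}\lambda, u^{-1}\beta\rangle$ and $\langle \mu, \beta\rangle = \langle u^{-1}\mu, u^{-1}\beta\rangle$ are simultaneously $\geq 0$ or simultaneously $\leq 0$, according to the sign of $u^{-1}\beta$. Combined with the above, the $|n| \geq 1$ strata of the intersection are empty, since they would require $\langle \lambda, \beta\rangle$ and $\langle \mu, \beta\rangle$ to have strictly opposite signs. For $n = 0$, the condition $\beta \in \Inv(\pi^\lambda)$ reduces to $\langle \lambda, \beta\rangle < 0$, and $\beta \in \Inv(y^{-1})$ reduces to either the subcase $\beta \notin \Inv(w^{-1})$ with $\langle \mu, \beta\rangle > 0$, or the subcase $\beta \in \Inv(w^{-1})$ with $\langle \mu, \beta\rangle \geq 0$. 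The chamber hypothesis, via $\langle \lambda, \beta\rangle < 0 \Rightarrow \langle \mu, \beta\rangle \leq 0$, forces the first subcase to be vacuous and pins the second to $\langle \mu, \beta\rangle = 0$, yielding exactly \eqref{eq:simpleintersectinvset}. The only real obstacle is careful bookkeeping of signs, caused by the asymmetry in \eqref{eq:def:pos-G-affine-roots} between $\beta \in \Delta^+$ (allowing $n \geq 0$) and $\beta \in -\Delta^+$ (requiring $n > 0$); once that is handled cleanly, the chamber hypothesis does essentially all the work.
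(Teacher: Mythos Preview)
Your proposal is correct and follows essentially the same approach as the paper's proof: compute the action explicitly, then use the same-chamber hypothesis (via a common $u$ with $u(\lambda), u(\mu)$ dominant) to force the $\pi$-level to zero and reduce to the stated classical conditions. The only difference is presentation: you case-split on $\sgn(n)$ in the $\beta[n]$ parametrization, while the paper works with $\eta + m\pi$ and uses the monotonicity observation that increasing the $\pi$-coefficient preserves positivity, but the underlying argument is the same.
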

\begin{proof}
Assume $\eta +m\pi \in \Inv(x) \cap \Inv(y^{-1}).$ We have 
\begin{equation}
\begin{split}
x(\eta +m\pi )= & \eta +(m+\langle \eta, \lambda \rangle)\pi <0\\
y^{-1}(\eta +m\pi )= & w^{-1}(\eta) +(m-\langle \eta, \mu \rangle)\pi<0. 
\end{split}
\end{equation}
By hypothesis $u(\lambda), u(\mu) \in P_{+}$ for some $u \in W$. Note $\langle \eta, \lambda \rangle=\langle u(\eta), u(\lambda )\rangle$ and $\langle \eta, \mu \rangle=\langle u(\eta), u(\mu )\rangle.$ Since $\eta +m\pi \in \tilde{\Delta}^+,$ $u(\eta )>0$ would imply $m+\langle \eta, \lambda \rangle\geq m$ and thus  $x(\eta +m\pi )\in \tilde{\Delta}^+.$ 
So $u(\eta )<0,$ whence $m-\langle \eta, \mu \rangle\geq m.$ It follows that $m=\langle \eta, \mu \rangle=0,$ $w^{-1}(\eta )<0,$ $\eta >0$ and $\langle \eta, \lambda \rangle<0.$
\end{proof}

Using this information, the statement of Theorem \ref{thm:length-deficit-for-tits-cone-weyl-groups} can now be checked for these pairs. 
        
\begin{Proposition}\label{prop:startcase-samechamber-onetransl}
Let $x=\pi^{\lambda}, y=\pi^{\mu }w \in W_\cT$ be elements such that $\lambda$ and $\mu$ lie in the same Weyl chamber.
  Then $\ell(x) + \ell(y) = \ell(xy) + 2 | \Inv(x) \cap \Inv(y^{-1}) |.$
\end{Proposition}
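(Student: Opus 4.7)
The plan is to compute both sides of \eqref{eq:length-deficit-kac-moody-tits} directly from the length formula \eqref{eq:def-length-doubleaffine} and to match them with the description of $\Inv(x) \cap \Inv(y^{-1})$ given by Lemma \ref{lem:simpleintersect}. Fix $u \in W$ with $u(\lambda), u(\mu) \in P^+$; since the Tits chambers are convex cones, $u(\lambda+\mu) \in P^+$ as well. For any $\nu \in \cT$ set $S^-_\nu = \{\eta \in \Delta^+ \mid \langle \eta,\nu \rangle < 0\}$, which is finite since $\nu \in \cT$. The first key observation will be the cancellation of the pure-translation lengths: the identity $(\lambda+\mu)_+ = \lambda_+ + \mu_+$ combined with the formula $\ell(\pi^\nu) = 2\langle \rho,\nu_+ \rangle$ noted beneath \eqref{eq:def-length-doubleaffine} gives $\ell(\pi^\lambda) + \ell(\pi^\mu) = \ell(\pi^{\lambda+\mu})$.

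Second, I would compare \eqref{eq:def-length-doubleaffine} for $\pi^\nu w$ against the $w=1$ case term by term over $\eta \in \Delta^+$. The only discrepancies occur on $\Inv(w^{-1})$, contributing $-1$ when $\langle\eta,\nu\rangle < 0$ and $+1$ when $\langle\eta,\nu\rangle \geq 0$. Summing yields
\begin{equation*}
\ell(\pi^\nu w) - \ell(\pi^\nu) = \ell(w) - 2\,|\Inv(w^{-1}) \cap S^-_\nu|.
\end{equation*}
Applying this with $\nu = \mu$ and $\nu = \lambda+\mu$, subtracting, and using the translation cancellation above, I obtain
\begin{equation*}
\ell(x) + \ell(y) - \ell(xy) = 2\bigl(|\Inv(w^{-1}) \cap S^-_{\lambda+\mu}| - |\Inv(w^{-1}) \cap S^-_\mu|\bigr).
\end{equation*}

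The third step is to match this count with $2|\Inv(x) \cap \Inv(y^{-1})|$. Because $u(\lambda)$ and $u(\mu)$ are both dominant, for each $\eta \in \Delta^+$ the pairings $\langle \eta,\lambda \rangle$ and $\langle \eta,\mu \rangle$ have the same weak sign: both are $\geq 0$ when $u(\eta) > 0$ and both are $\leq 0$ when $u(\eta) < 0$. This forces $S^-_\mu \subseteq S^-_{\lambda+\mu} = S^-_\lambda \cup S^-_\mu$, so the parenthesis simplifies to $|\Inv(w^{-1}) \cap (S^-_\lambda \setminus S^-_\mu)|$. Moreover, for $\eta \in S^-_\lambda$ we automatically have $\langle \eta,\mu\rangle \leq 0$, so $\eta \notin S^-_\mu$ is equivalent to $\langle \eta,\mu\rangle = 0$. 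Combined with Lemma \ref{lem:simpleintersect}, this identifies the set with $\Inv(x) \cap \Inv(y^{-1})$ and completes the proof.

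There is no real obstacle beyond careful bookkeeping through the piecewise length formula; the argument hinges on the same-chamber hypothesis in two essential places, first to get the translation-length cancellation $\lambda_+ + \mu_+ = (\lambda+\mu)_+$, and second to control $S^-_\lambda$ and $S^-_\mu$ as subsets of the ``anti-chamber'' positive roots $\{\eta \in \Delta^+ \mid u(\eta) < 0\}$. Both fail for pairs $(\lambda,\mu)$ in opposing chambers, which is consistent with the fact that more sophisticated inductive techniques are developed in the later subsections to handle the general case.
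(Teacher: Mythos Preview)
Your proof is correct and follows essentially the same approach as the paper: both compute $\ell(x)+\ell(y)-\ell(xy)$ directly from the length formula \eqref{eq:def-length-doubleaffine} root by root, using the same-chamber hypothesis to ensure that $\langle\eta,\lambda\rangle$ and $\langle\eta,\mu\rangle$ have the same weak sign, and then identify the surviving contributions with the set in Lemma~\ref{lem:simpleintersect}. Your packaging via the intermediate identity $\ell(\pi^\nu w)-\ell(\pi^\nu)=\ell(w)-2|\Inv(w^{-1})\cap S^-_\nu|$ and the set equality $S^-_{\lambda+\mu}=S^-_\lambda\cup S^-_\mu$ is a bit more structured than the paper's bare case check, but the content is the same.
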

\begin{proof}
By Lemma \ref{lem:simpleintersect} it suffices to show that:
\begin{align}
  \label{eq:d:4}
\ell(x) + \ell(y) - \ell(xy)=2|\{\eta \in \Delta ^+\mid \eta \in \Inv(w^{-1}), \langle \eta, \lambda \rangle <0, \langle \eta, \mu \rangle =0\}|
\end{align}
This follows from the length formula \cite[(19)]{Muthiah-Orr-2019}, recalled in \eqref{eq:def-length-doubleaffine} above. We have $xy=\pi^{\lambda+\mu }w,$ so it suffices to check that the contributions of an $\eta \in \Delta ^+$ to $\ell(x),$ $\ell(y)$ and $\ell(xy)$ cancel, except in the case $\eta\in \Inv(w^{-1}), \langle \eta, \lambda \rangle <0, \langle \eta, \mu \rangle =0,$ when the total contribution is $2$.

Indeed this is straightforward to check, using the fact that by hypothesis $u(\lambda), u(\mu) \in P_{+}$ for some $u \in W$, and so $\langle \lambda, \eta \rangle \geq 0$ and $\langle \mu, \eta \rangle \geq 0$ for $\eta \notin \Inv(u),$ whereas $\innprod{\lambda }{\eta }\leq 0$ and $\innprod{\mu}{\eta }\leq 0$ if $\eta \in \Inv(u).$
\end{proof}

\subsection{Induction by length}\label{ssect:induction-layout}

Let $x,y\in \WTits$ be an arbitrary pair of elements. Write $x=\pi^\lambda w_x$ with $\lambda \in \Titscone$ and $w_x\in W.$ We may choose $w_y,w\in W$ so that $y=w_y\pi^{\mu}w,$ and $\mu $ lies in the same Weyl chamber as $\lambda .$ The proof of Theorem \ref{thm:length-deficit-for-tits-cone-weyl-groups} is by induction on $\ell(w_x)+\ell(w_y).$ The base case, $\ell(w_x)=\ell(w_y)=0$ is exactly Proposition \ref{prop:startcase-samechamber-onetransl}. The induction step is formulated in the following.

\begin{Claim}
	\label{claim:inductions-step-for-length-deficit}
	Let $x,y\in \WTits$  and write $x=\pi^\lambda w_x,$ $y=w_y\pi^{\mu}w,$ where $w_x,w_y,w\in W$ and $\lambda, \mu $ are in the same Weyl chamber. 
	Assume that Theorem \ref{thm:length-deficit-for-tits-cone-weyl-groups} holds for the pair $(x,y)$. 
	
	Let $s \in S$ be a simple reflection. If $w_xs > w_x$ then Theorem \ref{thm:length-deficit-for-tits-cone-weyl-groups} holds for the pair $(xs,y)$. If $sw_y > w_y$ then Theorem \ref{thm:length-deficit-for-tits-cone-weyl-groups} holds for the pair $(x,sy)$.
\end{Claim}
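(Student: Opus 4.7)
The plan is to reduce the claim to a single combinatorial identity. Since the length-deficit formula of Theorem \ref{thm:length-deficit-for-tits-cone-weyl-groups} is assumed for $(x,y)$, establishing it for $(xs,y)$ is equivalent to proving
\[
\bigl(\ell(xs) - \ell(x)\bigr) - \bigl( \ell(xsy) - \ell(xy) \bigr) = 2\bigl( |\Inv(xs) \cap \Inv(y^{-1})| - |\Inv(x) \cap \Inv(y^{-1})| \bigr),
\]
and analogously for the $(x,sy)$ assertion. As will be explained in \S\ref{ssect:symmetry}, the involution $(x,y) \mapsto (y^{-1}, x^{-1})$ preserves the length-deficit formula and swaps the two assertions of the claim, so I would restrict attention to the $(xs,y)$ case.

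For the left-hand side, I would evaluate $\ell(xs) - \ell(x)$ using the Muthiah--Orr criterion \eqref{eq:km-bruhat-lenght-ineq}. Under the hypothesis $w_x s > w_x$, i.e.\ $w_x(\alpha) \in \Delta^+$ for $\alpha$ the simple root of $s$, the KM affine root $x(\alpha) = w_x(\alpha) + \innprod{\lambda}{w_x(\alpha)} \pi$ lies in $\afDp$ precisely when $\innprod{\lambda}{w_x(\alpha)} \geq 0$, so $\ell(xs) - \ell(x) = +1$; otherwise the difference is $-1$. The term $\ell(xsy) - \ell(xy)$ is handled similarly, by writing $xsy = (xy)(y^{-1} s y)$, recognising $y^{-1} s y = s_{\gamma[m]}$ for a specific KM affine root $\gamma[m]$, and testing $(xy)(\gamma[m])$ for positivity.

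For the right-hand side, one needs to compare the two finite intersections $\Inv(x) \cap \Inv(y^{-1})$ and $\Inv(xs) \cap \Inv(y^{-1})$; finiteness follows from Theorem \ref{thm:finiteness-of-invx-intersect-invyinverse}. Although the ambient sets $\Inv(x)$ and $\Inv(xs)$ are infinite in general, the Coxeter-type relation $\Inv(xs) = s \cdot (\Inv(x) \triangle \{\alpha\})$ still holds and yields a natural bijection between $\Inv(x) \setminus \{\alpha\}$ and $\Inv(xs) \setminus \{\alpha\}$. Via this correspondence I would count how many roots in $\Inv(y^{-1})$ are gained or lost in passing from $\Inv(x)$ to $\Inv(xs)$, a count controlled by sets of the $\Inv^{++}$ type from \eqref{eq:def:Inv++}, in the spirit of \cite[Theorem 3.14]{Muthiah-Orr-2019}; this bookkeeping is what I expect to be encapsulated in the forthcoming Proposition \ref{prop:thekeyprop}.

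The main obstacle is this last step. Since $\Inv(x)$ and $\Inv(xs)$ are infinite, one cannot directly invoke a naive cardinality identity $|\Inv(xs)| = |\Inv(x)| \pm 1$ as in the Coxeter case. Instead one must verify, root by root, that the finite correspondence $\Inv(x) \setminus \{\alpha\} \leftrightarrow \Inv(xs) \setminus \{\alpha\}$ meets $\Inv(y^{-1})$ in such a way as to account exactly for the $\pm 1$ length differences computed above. Delicate sign analysis of the affine component $m$ of each borderline root $\eta[m]$ -- determining whether it is made negative by precisely one of $x$ and $xs$ -- is the combinatorial heart of the argument, and this is where I expect the techniques of \cite{Muthiah-Orr-2019} to be most directly adapted.
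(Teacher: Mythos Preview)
Your overall plan---subtract the assumed length-deficit formula for $(x,y)$ from the desired one for $(xs,y)$ and reduce to a difference identity---matches the paper's strategy, and you correctly anticipate that the substantive combinatorics is deferred to Proposition~\ref{prop:thekeyprop}. Two steps in your outline, however, do not go through.

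First, the symmetry reduction via $(x,y)\mapsto (y^{-1},x^{-1})$ fails: $W_\cT$ is only a semigroup, and in the genuinely infinite Kac--Moody case the Tits cone satisfies $\cT\cap(-\cT)=\{0\}$, so $x^{-1}=w_x^{-1}\pi^{-\lambda}$ typically lies outside $W_\cT$ and the length function $\ell$ is not even defined there. The paper instead treats the $(xs,y)$ and $(x,sy)$ assertions in parallel (equations \eqref{eq:leftjump} and \eqref{eq:rightjump}), and the reduction carried out in \S\ref{ssect:symmetry} works by observing that the reflection $s$ acting on KM affine roots gives a bijection $\Inv(x)\cap\Inv((sy)^{-1})\cong\Inv(xs)\cap\Inv(y^{-1})$; no inverses in $W_\cT$ are needed.

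Second, the difference $\ell(xsy)-\ell(xy)$ is \emph{not} $\pm 1$. The reflection $y^{-1}sy=s_{\gamma[m]}$ is not simple, and the positivity test \eqref{eq:km-bruhat-lenght-ineq} you invoke determines only the sign of this difference, not its magnitude; the actual value is $\pm|\Inv^{++}_{xy}(y^{-1}sy)|$ by \eqref{eq:2}, which can be any odd positive integer. Consequently the right-hand side of your difference identity must also carry a $|\Inv^{++}|$ term, and this is exactly what Lemma~\ref{lem:eight-cases-for-interesections-of-inversion-sets} records: the discrepancy between $|\Inv(xs)\cap\Inv(y^{-1})|$ and $|\Inv(x)\cap\Inv(y^{-1})|$ is governed not merely by whether $\alpha$ is gained or lost, but by how the correspondence $\beta\leftrightarrow s(\beta)$ interacts with membership in $\Inv(y^{-1})$, and the mismatch is measured precisely by $\Inv^{++}_{xy}(y^{-1}sy)$. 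Your final paragraph gestures at this, but the balancing is more delicate than ``accounting for $\pm 1$ length differences''.
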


The proof of Theorem \ref{thm:length-deficit-for-tits-cone-weyl-groups} is now reduced to the proof of Claim \ref{claim:inductions-step-for-length-deficit}.

\subsubsection{Cases in the induction step}\label{ssect:ascentdescentcases}

We break the setting of Claim \ref{claim:inductions-step-for-length-deficit} into cases by comparing the pairs $x$ and $xs,$ $y$ and $sy,$ $xy$ and $xsy$ in the Bruhat order. The following Proposition gives identities that together imply Claim \ref{claim:inductions-step-for-length-deficit} in all of the resulting eight cases. Recall the notation defined in \eqref{eq:def:Inv++}.

\begin{Lemma}
  \label{lem:eight-cases-for-interesections-of-inversion-sets}
Let $x,y \in W_{\cT}$, $s \in S$. If
\begin{enumerate}
\item $x<xs,$ $y<sy,$ $xy<xsy$, then:
\begin{align}
\label{eq:case1-a}  2|\Inv(xs)\cap\Inv(y^{-1})|-2|\Inv(x)\cap\Inv(y^{-1})|& =1-|\Inv^{++}_{xy}(y^{-1}sy)| \\
\label{eq:case1-b}  2|\Inv(x)\cap\Inv((sy)^{-1})|-2|\Inv(x)\cap\Inv(y^{-1})|& = 1-|\Inv^{++}_{xy}(y^{-1}sy)|
\end{align}
\item $x>xs,$ $y>sy,$ $xy<xsy$, then:
\begin{align}
\label{eq:case2-a} 2|\Inv(xs)\cap\Inv(y^{-1})|-2|\Inv(x)\cap\Inv(y^{-1})|& = -1-|\Inv^{++}_{xy}(y^{-1}sy)| \\
\label{eq:case2-b} 2|\Inv(x)\cap\Inv((sy)^{-1})|-2|\Inv(x)\cap\Inv(y^{-1})|& =-1-|\Inv^{++}_{xy}(y^{-1}sy)|
\end{align}
\item $x<xs,$ $y>sy,$ $xy>xsy$, then:  
\begin{align}
\label{eq:case3-a} 2|\Inv(xs)\cap\Inv(y^{-1})|-2|\Inv(x)\cap\Inv(y^{-1})|& = 1+|\Inv^{++}_{xsy}(y^{-1}sy)| \\ 
\label{eq:case3-b} 2|\Inv(x)\cap\Inv((sy)^{-1})|-2|\Inv(x)\cap\Inv(y^{-1})|& =-1+|\Inv^{++}_{xsy}(y^{-1}sy)|
\end{align}
\item $x>xs,$ $y<sy,$ $xy>xsy$, then: 
\begin{align}
\label{eq:case4-a} 2|\Inv(xs)\cap\Inv(y^{-1})|-2|\Inv(x)\cap\Inv(y^{-1})|& = -1+|\Inv^{++}_{xsy}(y^{-1}sy)|\\
\label{eq:case4-b} 2|\Inv(x)\cap\Inv((sy)^{-1})|-2|\Inv(x)\cap\Inv(y^{-1})|& =1+|\Inv^{++}_{xsy}(y^{-1}sy)|
\end{align}
\end{enumerate}
\end{Lemma}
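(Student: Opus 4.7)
The plan is to prove all eight identities by a direct combinatorial analysis of how inversion sets transform under multiplication by $s$, and then to use a symmetry argument to halve the number of independent identities.

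The first step is to establish a structural formula: for any $u \in \WTits$ and simple reflection $s = s_i$, the action of $s$ on $\afDp$ (which fixes the $\pi$-coefficient, permutes $\afDp \setminus \{\alpha_i\}$, and swaps $\alpha_i \leftrightarrow -\alpha_i$) gives
\[
\Inv(us) =
\begin{cases}
s(\Inv(u)) \sqcup \{\alpha_i\} & \text{if } u(\alpha_i) > 0, \\
s(\Inv(u) \setminus \{\alpha_i\}) & \text{if } u(\alpha_i) < 0.
\end{cases}
\]
I would apply this to $u = x$ (to describe $\Inv(xs)$) and $u = y^{-1}$ (to describe $\Inv((sy)^{-1}) = \Inv(y^{-1}s)$). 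The sign of $u(\alpha_i)$ is exactly what distinguishes the eight cases.

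The second step, carried out case by case, is to substitute these structural formulas and to use that $s$ is an involutive bijection of $\afDp \setminus \{\alpha_i\}$ to rewrite the quantity $|\Inv(xs) \cap \Inv(y^{-1})| - |\Inv(x) \cap \Inv(y^{-1})|$ as the sum of $|\Inv(x) \cap s(\Inv(y^{-1}))| - |\Inv(x) \cap \Inv(y^{-1})|$ and a correction of size $0$ or $\pm 1$ coming from the $\{\alpha_i\}$ terms. The correction accounts for the $\pm 1$ on the right-hand side of each identity. The remaining difference is controlled by the symmetric difference $\Inv(y^{-1}) \triangle s(\Inv(y^{-1}))$ intersected with $\Inv(x)$.

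The third, crucial step is to translate this symmetric difference into $|\Inv^{++}_u(y^{-1}sy)|$ with $u = xy$ or $u = xsy$ depending on case. Under the substitution $\eta := y^{-1}(\gamma)$, the condition $\gamma \in \Inv(y^{-1}) \triangle s(\Inv(y^{-1}))$ becomes ``$\eta$ and $s_{\beta[n]}(\eta)$ have opposite signs,'' where $s_{\beta[n]} := y^{-1}sy$ and $\beta[n]$ is the positive representative of $\pm y^{-1}(\alpha_i)$. Normalizing to the positive form gives $\xi \in \Inv(s_{\beta[n]})$, and the additional condition $\gamma \in \Inv(x)$ translates to a sign condition on $xy(\xi)$ and $xsy(\xi)$ matching precisely the definition \eqref{eq:def:Inv++} of $\Inv^{++}$. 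The factor of $2$ on the left-hand side of each identity is then absorbed by the canonical involution $\xi \mapsto -s_{\beta[n]}(\xi)$ on $\Inv^{++}_u(s_{\beta[n]})$, which pairs up contributions from the two halves of the symmetric difference. Finally, the symmetry $(x,y) \mapsto (y^{-1}, x^{-1})$ preserves Bruhat order, length, and the intersection $\Inv(x) \cap \Inv(y^{-1})$, and relates the two $\Inv^{++}$ sets via conjugation; it interchanges identities (a) and (b) within each case, cutting the work in half.

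The main obstacle is the third step. Although the principle is clean, the book-keeping is delicate: one must track whether $\beta[n] = y^{-1}(\alpha_i)$ or $\beta[n] = -y^{-1}(\alpha_i)$ (depending on the ascent/descent choice $y$ vs.\ $sy$), which in turn determines whether $\Inv^{++}_{xy}$ or $\Inv^{++}_{xsy}$ appears on the right-hand side, and with which sign. Getting the four independent sign combinations to match the statement of the lemma in each of the four cases is where the technical heart of the proof lies, and this is exactly what is packaged into the ``key proposition'' referenced in \S\ref{ssect:keypart}.
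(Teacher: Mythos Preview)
Your first two steps are sound and match the paper's Lemma~\ref{lem:cases-symmetry}: the structural formula for $\Inv(us)$ is correct, and applying it (together with the observation that $\alpha_i \notin \Inv(x)\cup\Inv(y^{-1})$ in case~(1)) does reduce the left-hand side of \eqref{eq:case1-a} to $2|\Inv(x)\cap s(\Inv(y^{-1}))| - 2|\Inv(x)\cap\Inv(y^{-1})|$, and simultaneously shows that (a) and (b) agree in each case. Your symmetry reductions are also in the right spirit, though the paper uses the substitutions $x\leftrightarrow xs$, $y\leftrightarrow sy$ rather than $(x,y)\mapsto(y^{-1},x^{-1})$.

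The gap is in step~3. Under your substitution $\eta = y^{-1}(\gamma)$, the condition $\gamma\in\Inv(x)$ becomes $y(\eta)>0$ and $xy(\eta)<0$: a sign condition on $y(\eta)$ and $xy(\eta)$, \emph{not} on $xy(\eta)$ and $xsy(\eta)$. Thus the two finite sets you obtain are
\[
P_{+-}=\{\xi\in\Inv(s_{\beta[n]}):y(\xi)>0,\ xy(\xi)<0\},\qquad
P_{-+}=\{\xi\in\Inv(s_{\beta[n]}):y(\xi)<0,\ xy(\xi)>0\},
\]
and you need $|P_{+-}|-|P_{-+}|$. By contrast, $\Inv^{++}_{xy}(s_{\beta[n]})=\{\xi\in\Inv(s_{\beta[n]}):xy(\xi)>0,\ xsy(\xi)<0\}$. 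The involution $\xi\mapsto -s_{\beta[n]}(\xi)$ sends the pair of conditions $(y,xy)$ to $(sy,xsy)$, not to itself, so it does not pair $P_{+-}$ with $P_{-+}$ and does not directly produce $\Inv^{++}_{xy}$. There is no evident bijective bridge between $\{y,xy\}$-signs and $\{xy,xsy\}$-signs; filling this in is the actual content of the argument.

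This is exactly why the paper does \emph{not} attempt a direct comparison. Instead (Proposition~\ref{prop:thekeyprop}) it passes through $\Inv((xy)^{-1})$: one composes the bijection $G_{x,y}:\Inv((xy)^{-1})\sqcup(\Inv(x)\cap\Inv(y^{-1}))^{\sqcup 2}\xrightarrow{\sim}\Inv(x)\sqcup\Inv(y^{-1})$, the injection $H\sqcup\id$ missing only $\alpha_i$, the inverse bijection $G_{xs,y}^{-1}$, and crucially the Muthiah--Orr bijection $K:\Inv((xsy)^{-1})\xrightarrow{\sim}\Inv((xy)^{-1})\sqcup\Inv^{++}_{xy}(y^{-1}sy)$ from \cite[Corollary~4.4]{Muthiah-Orr-2019}. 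The composite $L$ is then shown to restrict to a self-bijection of an infinite subset $\cS\subseteq\Inv((xy)^{-1})$, so that comparing finite complements yields \eqref{eq:case1-a}. The bijection $K$ is the nontrivial input that replaces your missing step~3.
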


Lemma \ref{lem:eight-cases-for-interesections-of-inversion-sets} implies Claim \ref{claim:inductions-step-for-length-deficit} and therefore Theorem \ref{thm:length-deficit-for-tits-cone-weyl-groups} in turn, as we now explain.

\begin{proof}[Proof of Claim \ref{claim:inductions-step-for-length-deficit} assuming Lemma \ref{lem:eight-cases-for-interesections-of-inversion-sets}]
	First observe that by \eqref{eq:def-Bruhat-doubleaffine} the cases (1)-(4) are the only ones that occur. 
	The key is \cite[Theorem 3.4]{Muthiah-Orr-2019}, by which we have that 
	\begin{equation}\label{eq:MOthm-3.14}
	\ell(u\cdot s_{\beta [n]})-\ell(u)=|\Inv^{++}_u(s_{\beta [n]})|
	\end{equation}
	for any pair $u\in \WTits$ and $\beta [n]\in \afDp$ such that $u<u\cdot s_{\beta [n]}$ in the Bruhat order. (Note if $\beta[n]\in S$ then $\Inv^{++}_u(s_{\beta [n]})=\{s_{\beta [n]}\}$ and the statement is familiar.)
	
	Let $x,y$ be as in the statement of Claim \ref{claim:inductions-step-for-length-deficit}, and let $s \in S$ be a simple reflection. We may apply \eqref{eq:MOthm-3.14} with the smaller of $xy$ and $xsy$ in place of $u,$ and $s_{\beta [n]}=y^{-1}sy.$
	
	If $w_x s > w_x$ then combining this with \ref{eq:case1-a}, \ref{eq:case2-a}, \ref{eq:case3-a} or \ref{eq:case4-a}, respectively, we have: 
	\begin{equation}\label{eq:leftjump}
	(\ell(xs)+\ell(y)-\ell(xsy))-(\ell(x)+\ell(y)-\ell(xy))= 2|\Inv(xs)\cap\Inv(y^{-1})|-2|\Inv(x)\cap\Inv(y^{-1})|.
	\end{equation}
	If $s w_y > w_y$ then combining the above use of \eqref{eq:MOthm-3.14} with \ref{eq:case1-b}, \ref{eq:case2-b}, \ref{eq:case3-b}, or \ref{eq:case4-b}, we have: 
	\begin{equation}\label{eq:rightjump}
	(\ell(x)+\ell(sy)-\ell(xsy))-(\ell(x)+\ell(y)-\ell(xy))=2|\Inv(x)\cap\Inv((sy)^{-1})|-2|\Inv(x)\cap\Inv(y^{-1})|.
	\end{equation}
	This completes the proof of Claim \ref{claim:inductions-step-for-length-deficit}, i.e. the induction step, assuming the truth of Lemma \ref{lem:eight-cases-for-interesections-of-inversion-sets}.
\end{proof}

\subsubsection{Symmetry of cases}\label{ssect:symmetry}

The identities \eqref{eq:case1-a}-\eqref{eq:case4-b} are all of a similar flavour. We may in fact reduce the proof of all of them to just \eqref{eq:case1-a}. This is the content of the following. 

\begin{Lemma}\label{lem:cases-symmetry}
Lemma \ref{lem:eight-cases-for-interesections-of-inversion-sets} follows from \eqref{eq:case1-a}. That is, if for any pair $x,y\in \WTits$ and $s\in S$ simple reflection such that $x<xs,$ $y<sy$ and $xy<xsy$ we have (cf. \eqref{eq:case1-a})
\begin{equation}\label{eq:case1-a-repeat}  
2|\Inv(xs)\cap\Inv(y^{-1})|-2|\Inv(x)\cap\Inv(y^{-1})| =1-|\Inv^{++}_{xy}(y^{-1}sy)|,
\end{equation}
then all identities of Lemma \eqref{lem:eight-cases-for-interesections-of-inversion-sets} are true. 
\end{Lemma}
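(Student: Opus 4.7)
The plan is to exploit two natural symmetries—the anti-involution $u \mapsto u^{-1}$ on $W_P$, and multiplication of $x$ or $y$ by the simple reflection $s = s_\alpha$—to deduce the seven remaining identities from \eqref{eq:case1-a-repeat}. Throughout I abbreviate $A = \Inv(x) \cap \Inv(y^{-1})$, $B = \Inv(xs) \cap \Inv(y^{-1})$, $C = \Inv(x) \cap \Inv((sy)^{-1})$, $D = \Inv(xs) \cap \Inv((sy)^{-1})$, so that each identity of Lemma \ref{lem:eight-cases-for-interesections-of-inversion-sets} takes the form $2(|B|-|A|) = \pm 1 \pm J$ or $2(|C|-|A|) = \pm 1 \pm J$, with $J = |\Inv^{++}_{xy}(y^{-1}sy)|$ in cases 1, 2 and $J = |\Inv^{++}_{xsy}(y^{-1}sy)|$ in cases 3, 4.

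The first tool is a \emph{conjugation bijection}: for $u \in W_P$ and a positive affine real root $\beta[n]$, the map $\gamma[m] \mapsto u(\gamma[m])$ gives a bijection $\Inv^{++}_u(s_{\beta[n]}) \to \Inv^{++}_{u^{-1}}(s_{u(\beta[n])})$, verified directly from \eqref{eq:def:Inv++}. Applied with $u = xy$ and $r = y^{-1}sy$ (so that $uru^{-1} = xsx^{-1}$), it identifies $|\Inv^{++}_{xy}(y^{-1}sy)|$ with $|\Inv^{++}_{(xy)^{-1}}(xsx^{-1})|$, and similarly with $xsy$ in place of $xy$. The substitution $(x, y, s) \mapsto (y^{-1}, x^{-1}, s)$ preserves the case-1 and case-2 Bruhat conditions, exchanges cases 3 and 4, swaps $|B| \leftrightarrow |C|$, and fixes $|A|$ and $|D|$. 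Combined with the conjugation bijection, applying \eqref{eq:case1-a-repeat} to the inverted triple yields \eqref{eq:case1-b}; the analogous arguments convert (a) into (b) in each of the other cases.

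The second tool consists of the three substitutions $(xs, y, s)$, $(x, sy, s)$, $(xs, sy, s)$: the first is in case 1 iff the original is in case 4, transposing $|A| \leftrightarrow |B|$ and $|C| \leftrightarrow |D|$; the second is in case 1 iff the original is in case 3, transposing $|A| \leftrightarrow |C|$ and $|B| \leftrightarrow |D|$; the third is in case 1 iff the original is in case 2, transposing $|A| \leftrightarrow |D|$ and $|B| \leftrightarrow |C|$. Applying \eqref{eq:case1-a-repeat} to $(xs, y, s)$ when the original is in case 4 yields \eqref{eq:case4-a} directly, and then inversion produces \eqref{eq:case3-b}. For cases 2 and 3, applying \eqref{eq:case1-a-repeat} and \eqref{eq:case1-b} to $(xs, sy, s)$ and $(x, sy, s)$ respectively yields pairs of relations among $|A|, |B|, |C|, |D|$, but not the target identities on the nose.

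The missing ingredient is an \emph{auxiliary combinatorial lemma}: since $s = s_\alpha$ acts as an involution on $\afDp \setminus \{\alpha\}$ (sending $\alpha$ out of $\afDp$), the inversion set $\Inv(xs)$ differs from $s(\Inv(x) \setminus \{\alpha\})$ only by inclusion or exclusion of $\alpha$, depending on whether $x<xs$ or $x>xs$, and similarly for $\Inv((sy)^{-1})$ versus $s(\Inv(y^{-1}) \setminus \{\alpha\})$. From these explicit formulas one computes $|D| - |A| = 1, -1, 0, 0$ and $|B| - |C| = 0, 0, 1, -1$ in cases 1, 2, 3, 4 respectively. Feeding the case-2 values into the two equations from $(xs, sy, s)$ produces \eqref{eq:case2-a}, and inversion gives \eqref{eq:case2-b}; feeding the case-3 values into the two equations from $(x, sy, s)$ produces \eqref{eq:case3-b}, and combining with $|B| - |C| = 1$ gives \eqref{eq:case3-a}, from which inversion produces \eqref{eq:case4-b}. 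The main obstacle is this auxiliary combinatorial lemma: verifying the values of $|D| - |A|$ and $|B| - |C|$ in each case requires carefully tracking whether $\alpha$ belongs to each of the four inversion sets $\Inv(x)$, $\Inv(y^{-1})$, $\Inv(xs)$, $\Inv((sy)^{-1})$, and then keeping the signs and parities consistent through the chain of substitutions and inversions.
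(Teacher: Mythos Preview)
Your overall architecture—reducing everything to \eqref{eq:case1-a} via substitutions and a small combinatorial lemma about $|D|-|A|$ and $|B|-|C|$—is the same as the paper's, and your auxiliary lemma is correct and is exactly the ingredient the paper uses. The substitutions $(x,y)\mapsto(xs,y)$, $(x,y)\mapsto(x,sy)$, $(x,y)\mapsto(xs,sy)$ stay inside $W_{\cT}$ and work just as you say.

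There is, however, a genuine gap in your ``first tool''. You derive \eqref{eq:case1-b} from \eqref{eq:case1-a} by applying the hypothesis \eqref{eq:case1-a-repeat} to the inverted triple $(y^{-1},x^{-1},s)$. But the hypothesis of the lemma only gives you \eqref{eq:case1-a-repeat} for pairs $x,y\in W_{\cT}$, and $W_{\cT}$ is \emph{not} closed under inversion outside of finite type: if $y=\pi^{\mu}w$ with $\mu\in\cT$, then $y^{-1}=\pi^{-w^{-1}(\mu)}w^{-1}$, and $-w^{-1}(\mu)$ typically lies outside the Tits cone (for instance $-\Lambda_0\notin\cT$ in $\widehat{SL_2}$, since $\langle\delta,-\Lambda_0\rangle<0$). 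So you are not entitled to invoke \eqref{eq:case1-a-repeat} for $(y^{-1},x^{-1},s)$, and the conjugation bijection for $\Inv^{++}$, while correct as a set-theoretic identity, does not rescue this. The same objection applies each time you use inversion to pass from an (a)-identity to a (b)-identity.

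The fix is already in your hands: you compute $|B|-|C|=0$ in cases 1 and 2. This immediately converts \eqref{eq:case1-a} into \eqref{eq:case1-b} with no substitution at all, and likewise \eqref{eq:case2-a} into \eqref{eq:case2-b}; this is precisely what the paper does (it exhibits $s$ as a bijection $\Inv(x)\cap\Inv((sy)^{-1})\to\Inv(xs)\cap\Inv(y^{-1})$). Once you replace the inversion step by this direct use of $|B|=|C|$, your chain of substitutions $(xs,y)$, $(x,sy)$, $(xs,sy)$ together with $|D|-|A|=\pm 1$ in cases 1 and 2 reproduces the paper's argument essentially verbatim.
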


\begin{proof}

	Note that \eqref{eq:case3-a} follows from \eqref{eq:case2-a} and \eqref{eq:case4-a} follows from \eqref{eq:case1-a} by replacing $x$ by $xs$ (and leaving $y$ unchanged). Similarly, \eqref{eq:case3-b} follows from \eqref{eq:case1-b} and \eqref{eq:case4-b} follows from \eqref{eq:case2-b} by replacing $y$ by $sy$ (and leaving $x$ unchanged). So it suffices to consider cases (1) and (2). 
	
	Furthermore, we claim replacing both $x$ by $xs$ and $y$ by $sy$ reduces case (2) to case (1). Indeed, observe that if we write $x'=xs$ and $y'=sy$ then $x'y'=xy$ and $(y')^{-1}sy'=y^{-1}sy.$ Assume the pair $(x,y)$ satisfy the conditions in case (1). Then $(x',y')$ fall in case (2), and \eqref{eq:case2-a} and \eqref{eq:case2-b} (for $x'$ and $y'$) follow from \eqref{eq:case1-a} and \eqref{eq:case1-b} if we show 
	\begin{equation}\label{eq:(2)to(1)reduction}
	|\Inv(x)\cap \Inv(y^{-1})|+1=|\Inv(xs)\cap \Inv((sy)^{-1})|.
	\end{equation}
	Let $\alpha $ be the simple root corresponding to $s.$ Then \eqref{eq:(2)to(1)reduction} follows from:
	\begin{equation}\label{eq:(2)to(1)reduction-setversion}
	s(\Inv(x)\cap \Inv(y^{-1}))\cup \{\alpha \}=\Inv(xs)\cap \Inv((sy)^{-1}).
	\end{equation}
	Note that the assumption $x<xs,$ $y<sy$ implies that $\alpha \notin \Inv(x)\cup \Inv(y^{-1}).$ 
	
	We may therefore assume that $x,y$ are as in case (1), i.e. $x > xs$, $y < sy$ and $xy < xsy$. Then \eqref{eq:case1-b} follows from \eqref{eq:case1-a} if we show that 
	\begin{equation}\label{eq:(1b)to(1a)reduction}
	|\Inv(x)\cap\Inv((sy)^{-1})|=|\Inv(xs)\cap\Inv(y^{-1})|.
	\end{equation}
	These two finite sets are indeed in bijection by the reflection $s.$

\end{proof}

We have now reduced the proof of Theorem \ref{thm:length-deficit-for-tits-cone-weyl-groups} to just the proof of \eqref{eq:case1-a}. 

\subsection{The proof of Theorem \ref{thm:length-deficit-for-tits-cone-weyl-groups}}\label{ssect:keypart}

As explained in the preceeding subsections, it remains to prove the identity in \eqref{eq:case1-a}. We restate this here. 
\begin{Proposition}\label{prop:thekeyprop}
Let $x,y \in W_{\cT}$ and let $s \in S$ be a simple reflection so that $x<xs,$ $y<sy$ and $xy<xsy.$ Then
	\begin{equation}
	\label{eq:case1-a-third}  2|\Inv(xs)\cap\Inv(y^{-1})|-2|\Inv(x)\cap\Inv(y^{-1})| =1-|\Inv^{++}_{xy}(y^{-1}sy)| 
	\end{equation}
\end{Proposition}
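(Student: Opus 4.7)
The plan is to translate both sides of \eqref{eq:case1-a-third} into explicit counts of subsets of $\afDp$ formulated in terms of $X := \Inv(x)$ and $Y := \Inv(y^{-1})$. First I would analyze $\Inv(xs)$. Writing $s = s_\alpha$ for the simple reflection associated with a simple root $\alpha$, the map $s$ sends $\alpha$ to $-\alpha$ and permutes $\afDp \setminus \{\alpha\}$ among itself. The hypothesis $x < xs$ gives $x(\alpha) \in \afDp$, hence $\alpha \notin X$; a short verification then yields $\Inv(xs) = \{\alpha\} \sqcup s(X)$. Dually, $y < sy$ forces $y^{-1}(\alpha) \in \afDp$, so $\alpha \notin Y$. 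Thus $\Inv(xs) \cap Y = s(X) \cap Y$, and both this set and $X \cap Y$ are finite by Theorem \ref{thm:finiteness-of-invx-intersect-invyinverse}.

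Next I would partition $X \cap Y$ into four pieces $A^{XY}, A^{X\bar Y}, A^{\bar X Y}, A^{\bar X \bar Y}$ according to whether $s(\mu)$ lies in $X$ and in $Y$; similarly partition $X \setminus Y$ into $C^{XY}, C^{X\bar Y}, C^{\bar X Y}, C^{\bar X \bar Y}$. Writing $s(X) \cap Y = \{\mu \in X : s(\mu) \in Y\}$ and unpacking the partition yields
\[
|X \cap Y| - |s(X) \cap Y| = |A^{X\bar Y}| + |A^{\bar X \bar Y}| - |C^{XY}| - |C^{\bar X Y}|.
\]
The involution $\mu \mapsto s(\mu)$ on $\afDp \setminus \{\alpha\}$ restricts to a bijection $A^{X\bar Y} \leftrightarrow C^{XY}$ (both index $s$-orbits contained in $X$ that straddle $Y$), so the right-hand side simplifies to $|A^{\bar X \bar Y}| - |C^{\bar X Y}|$.

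The crucial step is the vanishing $C^{\bar X Y} = \emptyset$. Suppose for contradiction $\mu = \beta + m\pi \in C^{\bar X Y}$, and put $k = \langle \beta, \alpha^\vee\rangle$. Since $s$ acts trivially on $\pi$, one has $s(\mu) - \mu = -k\alpha$, and linearity of the actions of $x$ and $y^{-1}$ on $\ZZ\Delta \oplus \ZZ\pi$ gives
\[
x(s(\mu)) = x(\mu) - k \cdot x(\alpha), \qquad y^{-1}(s(\mu)) = y^{-1}(\mu) - k \cdot y^{-1}(\alpha).
\]
The first relation combined with $x(\mu) \in -\afDp$, $x(s(\mu)) \in \afDp$, and $x(\alpha) \in \afDp$ forces $k < 0$, via a short case analysis on whether the $\pi$-coefficient of $x(\alpha)$ is positive or zero. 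The second relation, with $y^{-1}(\mu) \in \afDp$, $y^{-1}(s(\mu)) \in -\afDp$, and $y^{-1}(\alpha) \in \afDp$, symmetrically forces $k > 0$, yielding the desired contradiction. Hence $|X \cap Y| - |s(X) \cap Y| = |A^{\bar X \bar Y}|$.

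Finally, I would enumerate $\Inv^{++}_{xy}(y^{-1}sy)$ directly. Substituting $\eta = y(\gamma[m])$, its defining conditions become $y^{-1}(\eta), x(\eta) \in \afDp$ and $y^{-1}(s(\eta)), xs(\eta) \in -\afDp$. Casing on the signs of $\eta$ and $s(\eta)$: the value $\eta = \alpha$ contributes one element; $\eta = -\alpha$ is excluded because $y^{-1}(\alpha) \in \afDp$; and the two remaining subcases, in which $\eta$ and $s(\eta)$ have the same sign, each yield a bijection with $A^{\bar X \bar Y}$ via $\eta = s(\mu)$ respectively $\eta = -\mu$. Thus each $\mu \in A^{\bar X \bar Y}$ contributes the two distinct elements $y^{-1}(s(\mu))$ and $-y^{-1}(\mu)$, giving $|\Inv^{++}_{xy}(y^{-1}sy)| = 1 + 2|A^{\bar X \bar Y}|$. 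Therefore $1 - |\Inv^{++}_{xy}(y^{-1}sy)| = -2|A^{\bar X \bar Y}| = 2|s(X) \cap Y| - 2|X \cap Y|$, which is \eqref{eq:case1-a-third}. The main obstacle is the third step: the argument is short but requires careful handling of the affine-positivity convention, where the $\pi$-coefficient is primary and the finite-root part governs only the $\pi = 0$ stratum, so one cannot naively add or subtract ``positive'' and ``negative'' affine roots.
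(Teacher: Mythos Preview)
Your argument is correct and takes a genuinely different route from the paper's.

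The paper proceeds by chaining together bijections between (formal disjoint unions of) \emph{infinite} sets: a bijection $G_{x,y}$ relating $\Inv((xy)^{-1})$ to $\Inv(x) \sqcup \Inv(y^{-1})$, the injection $H\sqcup\id$ coming from $\Inv(x)\hookrightarrow\Inv(xs)$, the analogous $G_{xs,y}^{-1}$, and the bijection $K$ from \cite{Muthiah-Orr-2019} relating $\Inv((xsy)^{-1})$ to $\Inv((xy)^{-1})\sqcup\Inv^{++}_{xy}(y^{-1}sy)$. The composite injection $L$ misses exactly one element; one then shows that $L$ restricts to a bijection of a cofinite set $\cS\subseteq\Inv((xy)^{-1})$, so the identity \eqref{eq:case1-a-third} drops out upon comparing the \emph{finite} complements. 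This is a natural extension of the bijection in Proposition~\ref{prop:length-deficit-for-coxeter-weyl-groups}, and its structural nature may be useful for other purposes.

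Your approach bypasses the infinite sets entirely. After the easy reduction $\Inv(xs)\cap Y = s(X)\cap Y$, you partition the finite sets $X\cap Y$ and $\{\mu\in X:s(\mu)\in Y\}$ by the sign pattern of $s(\mu)$, cancel $A^{X\bar Y}$ against $C^{XY}$ by the involution $s$, and then kill $C^{\bar X Y}$ by a direct positivity argument. The remaining piece $A^{\bar X\bar Y}$ is then matched two-to-one (plus the distinguished element $\alpha$) with $\Inv^{++}_{xy}(y^{-1}sy)$ via the substitution $\eta=y(\gamma[m])$. This is considerably more elementary: the only nontrivial ingredient is the observation that for $a,b\in\afDp$ and $a-kb\in\afD$ with $k\geq 0$, one cannot have $a-kb\in -\afDp$ unless $k=0$, which you correctly note requires separating the cases where the $\pi$-coefficient of $b$ is positive or zero (in the latter case using that real Kac--Moody roots have coordinates of a single sign). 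One small notational slip: your line ``$s(X)\cap Y=\{\mu\in X:s(\mu)\in Y\}$'' identifies a set with its $s$-preimage, but as you only use cardinalities this is harmless.
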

\begin{proof}
	
The bijection constructed in the proof of Proposition \ref{prop:length-deficit-for-coxeter-weyl-groups} works exactly as before, and we get a bijection
\begin{equation}
  \label{eq:d:3}
 F:  \Inv(x) \backslash \Inv(y^{-1})  \sqcup \Inv(y^{-1}) \backslash \Inv(x) \rightarrow \Inv((xy)^{-1})
\end{equation}
by the same formula. We can reframe this as a bijection
\begin{equation}
  \label{eq:d:4}
  G_{x,y} : \Inv( (xy)^{-1} ) \sqcup ( \Inv(x) \cap \Inv(y^{-1}) ) \sqcup ( \Inv(y^{-1}) \cap \Inv(x) ) \overset{\sim}{\rightarrow} \Inv(x) \sqcup \Inv(y^{-1})
\end{equation}
where by $\sqcup$ we mean \emph{formal} disjoint union. Explicitly, we have
\begin{align}
  \label{eq:d:11}
  \text{For } \zeta \in \Inv((xy)^{-1}):\quad  & G_{x,y}(\zeta) =
  \begin{cases}
    -x^{-1}(\zeta) \in \Inv(x) \text{ if } x^{-1}(\zeta) < 0 \\
    x^{-1}(\zeta) \in \Inv(y^{-1}) \text{ if } x^{-1}(\zeta) > 0
  \end{cases} \\
\text{For } \beta \in( \Inv(x) \cap \Inv(y^{-1}) ) :\quad  & G_{x,y}(\beta) = \beta \in \Inv(x) \\
\text{For } \gamma \in( \Inv(y^{-1}) \cap \Inv(x) ) :\quad  & G_{x,y}(\gamma) = \gamma \in \Inv(y^{-1}) 
\end{align}

Because we are in the case where $xs > x$, we have an injection $H: \Inv(x) \hookrightarrow \Inv(xs)$ given by 
\begin{align}
  \label{eq:d:12}
 \text{For } \beta \in \Inv(x):\quad & H(\beta) = s(\beta)
\end{align}
The image of this injection is everything except $\alpha$ (note $xs(\alpha) = -x(\alpha) < 0$ by our assumption) where $\alpha$ is the simple positive root corresponding to $s$. We therefore have an injection
\begin{equation}
  \label{eq:d:5}
  H \sqcup \id : \Inv(x) \sqcup \Inv(y^{-1}) \hookrightarrow \Inv(xs) \sqcup \Inv(y^{-1}) 
\end{equation}
whose image misses only $\alpha \in \Inv(xs)$.

Also, as in \eqref{eq:d:4}, we have a bijection:
\begin{align}
  \label{eq:d:13}
  G_{xs,y}^{-1} : \Inv(xs) \sqcup \Inv(y^{-1}) \overset{\sim}{\rightarrow} \Inv( (xsy)^{-1} ) \sqcup ( \Inv(xs) \cap \Inv(y^{-1}) ) \sqcup ( \Inv(y^{-1}) \cap \Inv(xs) )
\end{align}
given by 
\begin{align}
  \text{For } \eta \in \Inv(xs):\quad  & G_{xs,y}^{-1}(\eta) =
  \begin{cases}
    -xs(\eta) \in \Inv((xsy)^{-1}) \text{ if } y^{-1}(\eta) > 0 \\
    \eta \in \Inv(xs) \cap \Inv(y^{-1}) \text{ if } y^{-1}(\eta)  < 0
  \end{cases} \\
  \text{For } \theta \in \Inv(y^{-1}):\quad  & G_{xs,y}^{-1}(\theta) =
  \begin{cases}
    xs(\theta) \in \Inv((xsy)^{-1}) \text{ if } xs(\theta) > 0 \\
    \theta \in \Inv(y^{-1}) \cap \Inv(xs) \text{ if } -xs(\theta)  < 0
  \end{cases} \\
\end{align}
We therefore have the injection
\begin{align*}
G_{xs,y}^{-1} \circ (H \sqcup \id)\circ G_{x,y} : \Inv( (xy)^{-1} ) \sqcup ( \Inv(x) \cap \Inv(y^{-1}) ) \sqcup ( \Inv(y^{-1}) \cap \Inv(x) )  \hookrightarrow \\ \Inv( (xsy)^{-1} ) \sqcup ( \Inv(xs) \cap \Inv(y^{-1}) ) \sqcup ( \Inv(y^{-1}) \cap \Inv(xs) )
\end{align*}
whose image misses only $G_{xs,y}^{-1}(\alpha)$

We further have a bijection (constructed in \cite[\S 4]{Muthiah-Orr-2019}):
\begin{equation}
  \label{eq:d:7}
  K : \Inv( (xsy)^{-1}) \overset{\sim}{\rightarrow} \Inv((xy)^{-1}) \sqcup  \Inv^{++}_{xy}(y^{-1}sy)
\end{equation}
Explicitly \cite[Corollary 4.4]{Muthiah-Orr-2019}, we have: 
\begin{align}
  \label{eq:d:15}
  K(\xi) =
  \begin{cases}
    \xi \in \Inv((xy)^{-1}) &\text{ if } (xy)^{-1}(\xi) < 0 \\
    xsx^{-1}(\xi) \in \Inv((xy)^{-1}) &\text{ if } (xy)^{-1}(\xi) > 0 \text{ and } xsx^{-1}(\xi) > 0\\
    (xy)^{-1}(\xi) &\text{ if } (xy)^{-1}(\xi) > 0 \text{ and } xsx^{-1}(\xi) < 0 \\
  \end{cases}
\end{align}
So we get an injection
\begin{align*}
  L = (K \sqcup \id \sqcup \id) \circ   G_{xs,y}^{-1} \circ (H \sqcup \id)\circ G_{x,y} : \Inv( (xy)^{-1} ) \sqcup ( \Inv(x) \cap \Inv(y^{-1}) ) \sqcup ( \Inv(y^{-1}) \cap \Inv(x) ) \hookrightarrow \\
  \Inv((xy)^{-1}) \sqcup  \Inv^{++}_{xy}(y^{-1}sy) \sqcup ( \Inv(xs) \cap \Inv(y^{-1}) ) \sqcup ( \Inv(y^{-1}) \cap \Inv(xs) )
\end{align*}
whose image misses only $(K \sqcup \id \sqcup \id) \circ G_{xs,y}^{-1}(\alpha)$.

Let us analyze $L$ explicitly. Let
\begin{align}
  \label{eq:d:16}
  \cS = \left\{  \zeta \in \Inv( (xy)^{-1}) \suchthat L(\zeta) \in \Inv( (xy)^{-1}) \right\}
\end{align}
We know $\Inv^{++}_{xy}(y^{-1}sy)$ is finite by \cite[Theorem 5.2]{Muthiah-Orr-2019}, and the intersections $ \Inv(x) \cap \Inv(y^{-1}) $ and $ \Inv(xs) \cap \Inv(y^{-1}) $ are finite by Theorem \ref{thm:finiteness-of-invx-intersect-invyinverse}. So we know that the complement of $\cS$ in $\Inv( (xy)^{-1})$ is a finite set.

The restricted map $L : \cS \rightarrow \Inv( (xy)^{-1})$ is given by the following formula:
\begin{align}
  \label{eq:d:6}
  \text{For } \zeta \in \cS:\quad L(\zeta) =
   \begin{cases}
     \zeta \text{ if } x^{-1}(\zeta) < 0 \\
     xsx^{-1}(\zeta) \text{ if } x^{-1}(\zeta) > 0 \text{ and } (xsy)^{-1}(\zeta) < 0 \\
     \zeta \text{ if } \text{ if } x^{-1}(\zeta) > 0 \text{ and } (xsy)^{-1}(\zeta) > 0 
   \end{cases}
\end{align}
We claim that the map $L$ induces a bijection:
  \begin{align}
    \label{eq:d:9}
    L : \cS \overset{\sim}{\rightarrow} \cS
  \end{align}
Let $\cS_1 = \{ \zeta \in \cS \suchthat L(\zeta) = \zeta) \}$, and let $\cS_2 = \{ \zeta \in S \suchthat L(\zeta) = xsx^{-1}(\zeta) \}$. It is clear that $L$ induces a bijection of $\cS_1$. We will show $L$ induces a bijection of $\cS_2$. Let $\zeta \in \cS_2$. Then $x^{-1}(\zeta) > 0$ and $(xsy)^{-1}(\zeta) < 0$, and $L(\zeta) = xsx^{-1}(\zeta)$. We will show that $L(\zeta)$ lies in $\cS_2$. First we compute $G_{x,y}(L(\zeta))$. We have $x^{-1}(L(\zeta)) = sx^{-1}(\zeta)$. Because $x^{-1}(\zeta) > 0$, this can only be negative if $\zeta$ were equal to $x(\alpha)$. A straightforward computation shows that $x(\alpha) \notin \cS$, so $\zeta$ cannot be equal to $x(\alpha)$.

So  $G_{x,y}(L(\zeta)) = x^{-1}( L(\zeta) = s x^{-1}(\zeta) \in \Inv(y^{-1})$. Next we compute $G_{xs,y}^{-1}(\theta) $ for $\theta =  s x^{-1}(\zeta))$. We have $xs(\theta) = \zeta > 0$, so $G_{xs,y}^{-1}(\theta) = \zeta$. Finally, because $\zeta \in \Inv((xy)^{-1})$, $K(\zeta) = \zeta$. So we have shown that $L(\zeta) \in \cS_2$, and $L^2(\zeta) = \zeta$. We conlude that $L$ induces a bijection of $\cS$ .

Looking at the complement of $\cS$, we see that $L$ induces an injection between 
\begin{align}
  \label{eq:d:8}
  L:   (\Inv( (xy)^{-1} ) \backslash \cS) \sqcup ( \Inv(x) \cap \Inv(y^{-1}) ) \sqcup ( \Inv(y^{-1}) \cap \Inv(x) ) \hookrightarrow \\
  (\Inv((xy)^{-1}) \backslash \cS) \sqcup  \Inv^{++}_{xy}(y^{-1}sy) \sqcup ( \Inv(xs) \cap \Inv(y^{-1}) ) \sqcup ( \Inv(y^{-1}) \cap \Inv(xs) )
\end{align}
whose image misses only one element. Observe now that the source and target of \ref{eq:d:8} are now finite sets. So we can conclude that:
\begin{align}
    \label{eq:d:10}
    2 | \Inv(x) \cap \Inv(y^{-1}) )| + 1 = |\Inv^{++}_{xy}(y^{-1}sy)| + 2|\Inv(xs) \cap \Inv(y^{-1})| 
\end{align}
  
\end{proof}


\section{The main conjecture and examples in the Kac-Moody affine setting}
\label{sec:conjecture-in-the-kac-moody-affine-setting}

Theorem \ref{thm:length-deficit-for-tits-cone-weyl-groups} establishes Proposition \ref{prop:length-deficit-for-coxeter-weyl-groups} in the Kac-Moody affine setting. We need this to state the main conjecture of this paper.

\begin{Conjecture}
  \label{conj:main-conjecture}
Theorem \ref{thm:expansion-properties-Coxeter} holds in the Kac-Moody affine case. Additionally, the explicit formulas in the length-deficit $0$ and $2$ cases (Examples \ref{example:lengthdeficit-0-Coxeter} and \ref{example:lengthdeficit-1-Coxeter}) hold in the Kac-Moody affine case.  
\end{Conjecture}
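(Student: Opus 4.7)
\medskip
\noindent\textbf{Proof plan for Conjecture \ref{conj:main-conjecture}.} Our strategy mirrors both the Coxeter proof (Theorem \ref{thm:expansion-properties-Coxeter}) and the length-deficit induction of Section \ref{sect:KMaffine-lengtdef}. The plan is to first establish the length-additive base case, then propagate the conjecture along simple reflections in the fashion of Lemmas \ref{lem:upstep-auxiliary} and \ref{lem:downstep-auxiliary}, and finally reach arbitrary $(x,y) \in \WTits\times\WTits$ by induction on the same parameter used in the proof of Theorem \ref{thm:length-deficit-for-tits-cone-weyl-groups}.

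\medskip
\noindent\textit{Step 1 (Length-additive case, $N=0$).} First prove $T_xT_y=T_{xy}$ whenever $\Inv(x)\cap\Inv(y^{-1})=\emptyset$. The innermost base case is $x=\pi^\lambda$, $y=\pi^\mu w$ with $\lambda,\mu\in P^+$, where a direct Bernstein computation gives
\[
T_xT_y = q^{\langle\rho,\lambda\rangle}\Theta^\lambda \cdot q^{\langle\rho,\mu\rangle}\Theta^\mu T_w = q^{\langle\rho,\lambda+\mu\rangle}\Theta^{\lambda+\mu}T_w = T_{xy},
\]
since $\lambda+\mu$ is dominant. The general length-additive statement then follows by iterated application of the Iwahori-Matsumoto identities \eqref{eq:Ts_onright} and \eqref{eq:Ts_onleft}: length-additivity ensures that all intermediate products involve only positive signs $\eps_k=+1$ in \eqref{eq:Tbasis-in-Bernstein-generators}, so no quadratic relation is ever triggered and a single $T$-basis element remains.

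\medskip
\noindent\textit{Step 2 (Propagation lemma).} Prove: \emph{if the conjecture holds for $(x,y)$, then it holds for $(xs_i,y)$ and $(x,s_iy)$ for every simple reflection $s_i$.} As in the Coxeter case there are ascent and descent sub-cases. The ascent case, say $\ell(xs_i)>\ell(x)$, uses $T_{xs_i}=T_xT_i$ and follows a bijective argument lifting Lemma \ref{lem:upstep-auxiliary}, tracking how $\Inv(x)\cap\Inv(y^{-1})$ maps to $\Inv(xs_i)\cap\Inv(y^{-1})$ via $s_i$. The descent case uses the quadratic relation $T_i^2 = (q-1)T_i+q$ to produce a recursion $c^z_{x,y} = (q-1)c^z_{xs_i,y} + q c^z_{x,s_iy}$ (or its left-right mirror), and Theorem \ref{thm:length-deficit-for-tits-cone-weyl-groups} is needed to verify that the resulting cardinality and Bruhat-order bookkeeping on $|\Inv\cap\Inv|$ matches the predictions of the conjecture.

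\medskip
\noindent\textit{Step 3 (Global induction).} Writing $x=\pi^\lambda w_x$, $y=w_y\pi^\mu w$ with $\lambda,\mu$ in a common Weyl chamber as in Section \ref{sect:KMaffine-lengtdef}, any such pair is reached from the base case of Step 1 by a finite sequence of simple-reflection moves realizing reduced expressions for $w_x$ and $w_y^{-1}$ (and for the Weyl element moving $(\lambda,\mu)$ into the dominant chamber). Iterated application of Step 2 then propagates the conjecture outward. The explicit $N=1$ formula \eqref{eq:length-deficit-two-T-basis-product} emerges along the way: it is exactly the first nontrivial instance of the descent recursion in Step 2, just as in Example \ref{example:lengthdeficit-1-Coxeter}.

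\medskip
\noindent\textit{Main obstacle.} The descent case of Step 2 is by far the hardest. In the Coxeter case (Lemma \ref{lem:downstep-auxiliary}) the set-theoretic bookkeeping was elementary because all inversion sets are finite and $s_i$ permutes them with a controlled discrepancy of one root. In the Kac-Moody affine setting, $\Inv(x)$ is infinite, and propagating the Bruhat lower bound (part (i), $z\geq xy$) and the support upper bound (part (iv), $|\mathrm{supp}|\leq 2^N$) appears to require set-theoretic analysis of the same flavor as Proposition \ref{prop:thekeyprop}, where infinite inversion sets are compared up to finite complements. Part (v) (degree bounds, positive leading coefficient, and signed lowest-degree estimate) compounds the difficulty: it forces a simultaneous induction in which the precise interaction of the $(q-1)$ and $q$ contributions to $c^z_{x,y}$ must be tracked degree-by-degree, and it is unclear a priori whether all four parts propagate cleanly in lockstep or whether a stronger induction hypothesis (perhaps incorporating a Demazure-product analogue of Remark \ref{rmk:Coxeter-demazure-product}) is needed to close the argument.
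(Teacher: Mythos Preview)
This statement is a \emph{conjecture} in the paper, not a theorem; the paper offers no proof, only a handful of explicit $\widehat{SL_2}$ verifications in \S\ref{sec:conjecture-in-the-kac-moody-affine-setting}. So there is no ``paper's own proof'' to compare against. More to the point, the authors explicitly flag (see the paragraph following Conjecture~\ref{conj:intro-main-conjecture} in the introduction) that their proof of Theorem~\ref{thm:expansion-properties-Coxeter} relies on reduced words, an aspect of Coxeter combinatorics that does \emph{not} generalise to the Kac-Moody affine setting, and that this is precisely why the statement remains conjectural. Your plan is, at its core, an adaptation of that same reduced-word induction, so you are proposing to push through exactly the strategy the authors believe fails.

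Beyond that general point, there are two concrete gaps. First, your induction parameter $\ell(w_x)+\ell(w_y)$ does not align with the ascent/descent dichotomy you need. In the Coxeter proof, peeling $y=sy'$ always strictly decreases $\ell(y)$, so both branches of the recursion land in the inductive hypothesis. Here, peeling a simple reflection off $w_y$ and absorbing it into $x$ (the upstep case $T_xT_y=T_{xs}T_{y'}$) can \emph{increase} $\ell(w_x)$ by one while decreasing $\ell(w_y)$ by one, leaving the parameter unchanged; and whether $xs>x$ in the Kac-Moody affine Bruhat order is governed by the sign of $\pi^\lambda w_x(\alpha)$, which is decoupled from whether $w_xs>w_x$ in $W$. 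So neither your recursion nor your induction terminates cleanly. Second, your Step~1 only handles the dominant--dominant pair $(\pi^\lambda,\pi^\mu w)$, but the base case your Step~3 actually requires is $w_x=w_y=1$ with $\lambda,\mu$ merely in a common chamber; by Proposition~\ref{prop:startcase-samechamber-onetransl} such pairs can already have nonzero length deficit, so the ``all $\eps_k=+1$'' claim does not cover them. Your closing ``Main obstacle'' paragraph correctly identifies the descent case as the crux, but what is missing is not refinement of the bookkeeping---it is a genuinely new idea replacing reduced-word induction, which is why the paper leaves this open.
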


The rest of this section is devoted to explicit computations that confirm this conjecture for specific examples in the case of $\widehat{SL_2}$.  These computations illustrate the multiplication algorithm described in \S \ref{sssect:mult-in-T-basis}. 

Working in the Kac-Moody root system $\widehat{SL_2}$, consider the cocharacter $\Lambda_0$, which satisfies $\langle \alpha_0, \Lambda_0  \rangle = 1$, and $\langle \alpha_1, \Lambda_0  \rangle = 0$. This does not uniquely determine $\Lambda_0$ because $\widehat{SL_2}$ has a singular Cartan matrix. Recall that in this case, we also have a choice for the character $\rho$. We fix the choices for $\Lambda_0$ and $\rho$ so that $\langle \rho, \Lambda_0  \rangle = 0$.

\subsection{Length deficit zero}

Pairs of the form $\pi^{\mu_1}$ and $\pi^{\mu_2}$ where $\mu_1$ and $\mu_2$ lie in the same Weyl chamber are easily seen to be length additive, and in these cases \eqref{eq:d:1} and \eqref{eq:d:2} immediately imply
\begin{equation}
  \label{eq:easy-length-additivity}
  T_{\pi^{\mu_1}} T_{\pi^{\mu_2}} = T_{\pi^{\mu_1 + \mu_2}}
\end{equation}

However, there are other length additive pairs not of this form. For example, let $x=\pi^{\Lo+4\delta }$ and $y=s_1s_0\pi^{\Lo+\delta }$. Then $\ell(x)=16$, $\ell(y)=2$ and $\ell(x)+\ell(y)-\ell(xy)=0$. 
We may write $T_x$ and $T_y$ in the Bernstein basis, and use the Bernstein relation \eqref{eq:Bernstein} and the generalized Iwahori-Matsumoto relations \eqref{eq:Ts_onright}, \eqref{eq:Ts_onleft} to compute: 
\begin{equation}
\begin{split}
T_xT_y=& q^{\innprod{\rho}{2\Lo+5\delta }}\cdot \Th{{\Lo+4\delta }}T_{s_1}^{-1}T_{s_0}^{-1}\Theta^{{\Lo+\delta }}\\
=& q^{\innprod{\rho}{2\Lo+5\delta }-1}\cdot T_{s_1}^{-1}\Th{{\Lo+4\delta }}\Th{\Lo+\alpha_1}T_{s_0}\\
=& T_{s_1}^{-1}T_{\pi^{2\Lo+4\delta +\alpha_1}}T_{s_0}= T_{\pi^{\Lo+4\delta }s_1s_0\pi^{\Lo+\delta }}= T_{xy}
\end{split}
\end{equation}

\subsection{Length deficit two}

Let $x=s_0\pi^{\Lo}=\pi^{\Lo-\alpha_0}s_0$ and $y=s_0s_1s_0\pi^{\Lo}.$ We check 
that $\Inv(x)\cap\Inv(y^{-1})=\{\beta \}$ where $\beta =-2\alpha_0-\alpha_1+\pi =\alpha_1-2\delta +\pi =(-\alpha_1+2\delta)[-1]=(2\alpha_0+\alpha_1)[-1].$ Therefore, by Theorem \ref{thm:length-deficit-for-tits-cone-weyl-groups}, $x$ and $y$ are a length-deficit two pair. 
	
	Using the generalized Iwahori-Matsumoto relations \eqref{eq:Ts_onleft} 
        we have $T_x=T_{s_0}^{-1}T_{\pi^{\Lo}}$ and 
        $T_y=T_{s_0}^{-1}T_{s_1}^{-1}T_{s_0}^{-1}T_{\pi^{\Lo}}.$ Since $\Lo $ is dominant, we have $T_{\pi^{\Lo}}=q^{\innprod{\Lo}{\rho}}\cdot \Th{\Lo}.$
	
	Therefore:
	\begin{equation}\label{eq:IMcomplete}
	T_x\cdot T_y=q^{2\innprod{\Lo}{\rho}}\cdot T_{s_0}^{-1}\Th{\Lo}T_{s_0}^{-1}T_{s_1}^{-1}T_{s_0}^{-1}\Th{\Lo}
	\end{equation}

        Using the Bernstein relations, one has:
	\begin{equation}\label{eq:move_s0_oneover}
	\begin{split}
	T_{s_0}^{-1}\Th{\Lo} 
	= & q^{-1}\Th{\Lo-\alpha_0}T_{s_0}\\
	\end{split}
	\end{equation}
	Substituting into the right-hand side of \eqref{eq:IMcomplete}:
	\begin{equation}\label{eq:Bern_mult_step1}
	\begin{split}
	T_x\cdot T_y
	=& q^{2\innprod{\Lo}{\rho}-2}\cdot \Th{\Lo-\alpha_0}T_{s_1}^{-1}\Th{\Lo-\alpha_0}T_{s_0}\\
	\end{split}
	\end{equation}
	Applying \eqref{eq:T-identities} and \eqref{eq:Bernstein} again, noting $\innprod{\alpha_1}{\Lo-\alpha_0}=2$ it is straightforward to check that:
	\begin{equation}
	\begin{split}
	T_{s_1}^{-1}\Th{\Lo-\alpha_0}
	=& q^{-1}\Th{\Lo-\alpha_0-2\alpha_1}T_{s_1}+q^{-1}(q-1)\Th{\Lo-\alpha_0-\alpha_1}\\
	\end{split}
	\end{equation}
	Again substituting into \eqref{eq:Bern_mult_step1} one then has: 
	\begin{equation}\label{eq:Bern_mult_step2}
	\begin{split}
	T_x\cdot T_y
	=& q^{2\innprod{\Lo}{\rho}-3}\cdot \Th{\Lo-\alpha_0}(\Th{\Lo-\alpha_0-2\alpha_1}T_{s_1}+(q-1)\Th{\Lo-\alpha_0-\alpha_1})T_{s_0}\\
	=& q\cdot T_{\pi^{2\Lo-2\alpha_0-2\alpha_1}}T_{s_1}T_{s_0}+(q-1)T_{\pi^{2\Lo-2\alpha_0-\alpha_1}}T_{s_0}\\
	\end{split}
	\end{equation}
	The last line of the equation converts the elements of the Bernstein basis back to the $T$ basis. Note that the weights $2\Lo-2\alpha_0-2\alpha_1$ and $2\Lo-2\alpha_0-\alpha_1$ are dominant, and thus the BKP algorithm \S \ref{sssect:BKPalgorithm} is immediate.

	Observe that $xy=\pi^{2\Lo-2\delta }s_1s_0$ and $xs_{\beta }y=\pi^{2\Lo-\delta -\alpha_0}s_0$. Since we have $\pi^{2\Lo-2\alpha_0-2\alpha_1}(\alpha_1)=\alpha_1>0$ and $(\pi^{2\Lo-2\alpha_0-2\alpha_1}s_1)(\alpha_0)=\alpha_0+2\alpha_1+2\pi >0$ the first term is $T_{xy}$ and since $\pi^{2\Lo-2\alpha_0-\alpha_1}(\alpha_0)=\alpha_0>0$ the second term is $T_{xs_{\beta}y}$ by the generalized Iwahori-Matsumoto relations \ref{eq:IMcomplete} and therefore 
	\begin{equation}\label{eq:explicitcomp-result}
	\begin{split}
	T_x\cdot T_y=& q\cdot T_{\pi^{2\Lo-2\delta}s_1s_0}+(q-1)\cdot T_{\pi^{2\Lo-\delta -\alpha_0}s_0}\\
	=& q\cdot T_{xy}+(q-1)\cdot T_{xs_{\beta}y},
	\end{split}
	\end{equation}
	as predicted. 	

Note that $\beta \in \Inv(x) \cap \Inv(y^{-1})$, implies  $xs_\beta y > xy$.
        
\subsection{Length deficit four with three terms in the product}

In the case of Weyl groups, we saw that for length-deficit four pairs, the product in the Hecke algebra may have three or four terms. The same phenomenon occurs here.

Let $x=\pi^{\Lo+2\alpha_1}s_0s_1$ and $y= \pi^{\Lo-\alpha_1}s_1(s_0s_1)^2.$ Then $\ell(x)=14$ and $\ell(y)=5,$ and $\ell(x)+\ell(y)-\ell(xy)=4.$ We may check that $\Inv (x)\cap \Inv(y^{-1})=\{\alpha_0,\ \alpha_1[-1]\}.$ Writing $x=s_0s_1s_0\pi^{\Lo+4\delta }s_0$ and $y=s_1s_0\pi^{\Lo+\delta }s_1s_0s_1$ we may rewrite $T_x$ and $T_y$ in the Bernstein basis, and use three instances of the Bernstein relation to compute:
\begin{equation}
\begin{split}
T_xT_y=& q^{\innprod{\rho}{2\Lo+5\delta }}\cdot T_{s_0s_1s_0}^{-1}\Theta^{{\Lo+4\delta }}T_{s_0} \cdot  T_{s_1}^{-1}T_{s_0}^{-1}\Theta^{{\Lo+\delta }}T_{s_1s_0s_1}\\
=& q^{\innprod{\rho}{2\Lo+4\delta }}\cdot T_{s_0s_1s_0}^{-1}(T_{s_0}\Th{\Lo+4\delta -\alpha_0}+(q-1)\Th{\Lo+4\delta })\cdot \\ 
& \cdot (\Th{\Lo-\alpha_1}T_{s_1}+(q-1)\Th{\Lo})T_{(s_0s_1)^2}\\
=& q^{\innprod{\rho}{2\Lo+4\delta }}\cdot T_{s_0}^{-1}\Th{2\Lo+3\delta}T_{(s_0s_1)^2}+\\
&+q^{\innprod{\rho}{2\Lo+4\delta }}(q-1)\cdot T_{s_1s_0}^{-1}\Th{2\Lo+4\delta -\alpha_0}T_{(s_0s_1)^2}+\\
&+ q^{\innprod{\rho}{2\Lo+4\delta }}(q-1)\cdot T_{s_0s_1s_0}^{-1}\Th{2\Lo+4\delta -\alpha_1}T_{s_1}T_{(s_0s_1)^2}+\\
&+q^{\innprod{\rho}{2\Lo+4\delta }}(q-1)^2\cdot T_{s_0s_1s_0}^{-1}\Th{2\Lo+4\delta}T_{(s_0s_1)^2}\\
\end{split}
\end{equation}
The first, second and fourth terms have dominant weights $\mu$ appearing as $\Th{\mu },$ so we may convert these back to the $T$ basis directly. For the third term, we may apply a Bernstein relation again. After simplification, this yields: 
\begin{equation}
\begin{split}
T_xT_y=& q^{2}\cdot T_{s_0}^{-1}T_{\pi^{2\Lo+3\delta}}T_{(s_0s_1)^2}+\\
&+q(q-1)\cdot T_{s_1s_0}^{-1}T_{\pi^{2\Lo+3\delta +\alpha_1}}T_{(s_0s_1)^2}+\\
&+ (q-1)\cdot T_{(s_1s_0)^2}^{-1}T_{\pi^{2\Lo+4\delta +\alpha_1}}T_{(s_0s_1)^2}\\
\end{split}
\end{equation}
Now we may check Iwahori-Matsumoto relations (or compute lengths) to recognize each term is supported at a single element of the $T$-basis, namely: 
\begin{equation}
\begin{split}
T_xT_y=& q^{2}\cdot T_{s_0\pi^{2\Lo+3\delta}(s_0s_1)^2}+q(q-1)\cdot T_{s_0s_1\pi^{2\Lo+3\delta +\alpha_1}(s_0s_1)^2}+(q-1)\cdot T_{(s_0s_1)^2\pi^{2\Lo+4\delta +\alpha_1}(s_0s_1)^2}\\
=& q^{2}\cdot T_{xy}+q(q-1)\cdot T_{xs_{\alpha_1[-1]}y}+(q-1)\cdot T_{xs_{\alpha_0}y}.\\
\end{split}
\end{equation}
Observe that $\ell(xs_{\alpha_1[-1]}y)-\ell(xy)=1$ and $\ell(xs_{\alpha_0}y)-\ell(xy)=3.$ (Cf. Example \ref{example:lengthdeficit-2-Coxeter}.) As above, we know all terms are greater than or equal to $xy$ in the Bruhat order.

\subsection{Length deficit four with four terms in the product}

Let $x=\pi^{\Lo+2\alpha_1}$ and $y=\pi^{\Lo+\alpha_1}s_0s_1.$ Then $\ell(x)=16,$ $\ell(y)=4,$ and $\ell(x)+\ell(y)-\ell(xy)=4.$ We may check that $\Inv (x)\cap \Inv(y^{-1})=\{-\alpha_1+2\delta ,\ -\alpha_1+3\delta \}.$ Writing $x=\pi^{s_0s_1s_0(\Lo+4\delta )}$ and $y=\pi^{s_0(\Lo+\delta )}s_0s_1$ we may rewrite $T_x$ and $T_y$ in the Bernstein basis, use the Bernstein relation to perform the multiplication, and return the product into the $T$ basis as follows: 
\begin{equation}
\begin{split}
T_xT_y=&q^{\innprod{\rho}{2\Lo+5\delta }} T_{s_0s_1s_0}^{-1}\Theta_{\Lo+4\delta }T_{s_0}T_{s_1}\Theta_{\Lo+\delta }T_{s_1}\\
=&q^{\innprod{\rho}{2\Lo+5\delta }} T_{s_0s_1s_0}^{-1}\Theta_{2\Lo+4\delta +\alpha_1}T_{s_0}T_{s_1}^2+(q-1)q^{\innprod{\rho}{2\Lo+5\delta }} T_{s_0s_1s_0}^{-1}\Theta_{2\Lo+5\delta }T_{s_1}^2\\
=&q\cdot T_{s_0s_1s_0}^{-1}T_{\pi ^{2\Lo+4\delta +\alpha_1}}T_{s_0}T_{s_1}^2+(q-1)T_{s_0s_1s_0}^{-1}T_{\pi^{2\Lo+5\delta }}T_{s_1}^2\\
\end{split}
\end{equation}
Applying the quadratic relations and the generalized Iwahori-Matsumoto relations, this can be written as: 
\begin{equation}\label{eq:TxTy-Tbasis-final}
\begin{split}
T_xT_y=&q^2\cdot T_{xy}+q(q-1)\cdot T_{xs_{-\alpha_1+2\delta}y}+q(q-1)T_{xs_{-\alpha_1+3\delta}y} +(q-1)^2T_{(s_0s_1)^2\pi^{2\Lo+5\delta }}
\end{split}
\end{equation}
Here $\ell(xs_{-\alpha_1+2\delta}y)-\ell(xy)=1,$ $\ell(xs_{-\alpha_1+3\delta}y)-\ell(xy)=1,$ and $\ell((s_0s_1)^2\pi^{2\Lo+5\delta })-\ell(xy)=2.$

As in the previous examples, the first, second, and third term are immediately greater than or equal to $xy$ in the Bruhat order. However, we need to check the fourth term $(s_0s_1)^2\pi^{2\Lo+5\delta }=xy\cdot s_{\alpha_0[-1]}s_1.$ We have $xy(\alpha_0[-1])=\alpha_1-3\delta +\pi >0,$ (hence $\alpha_0[-1]\in \Inv^{++}_{xy}(\alpha_0[-1])$ and) $xy<xy\cdot s_{\alpha_0[-1]};$ similarly $xy\cdot s_{\alpha_0[-1]}(\alpha_1)=\pi ^{2\Lo-3\delta+4\alpha_1}s_0s_1s_0(\alpha_1)=-\alpha_1+4\delta >0,$ whence $xy\cdot s_{\alpha_0[-1]}<xy\cdot s_{\alpha_0[-1]}s_1.$ So the fourth element of the support is also greater than $xy$ in the Bruhat order.


\section{The $q=0$ specialization and the Demazure product}\label{sect:Demazure-product}

Let $W$ be a Weyl group (or more generally any Coxeter group). It follows directly from the braid and quadratic relations that the $T$-basis of the Hecke algebra $\cH_W$ has structure coefficients lying in $\ZZ[q]$. We can therefore consider the $q=0$ specialization of $\cH_W$. It is easy to check that in the resulting nil-Hecke algebra 
\begin{equation}
  \label{eq:classical-demazure-product-and-q-equal-zero}
  T_x T_y \equiv (-1)^{\ell(x) + \ell(y)} T_{x \star y}
\end{equation}
for all $x,y \in W,$ where $x \star y$ denotes the \emph{Demazure product} on $W$ \cite[p. 721]{kenney2014coxeter}. Recall that the Demazure product is an associative multiplication on $W$ characterized by
\begin{equation}
  \label{eq:d:2}
  w \star s =
  \begin{cases}
    w s \textif \ell(ws) = \ell(w) + 1 \\
    w \textif \ell(ws) = \ell(w) - 1
  \end{cases}
\end{equation}
for $s$ a simple reflection in $W$, and $w \in W$ arbitrary.

In the Kac-Moody affine case, we consider $W_\cT = W \ltimes \cT$, and the corresponding Kac-Moody affine Hecke algebra $\cH = \cH_{W_{\cT}}$. As mentioned in \S \ref{sssect:mult-in-T-basis} the structure coefficients of the $T$-basis lie in $\ZZ[q]$ as well, but this is a non-trivial result of \cite[\S 6.7]{BardyPanse-Gaussent-Rousseau-2016} and \cite[Theorem 3.22]{Muthiah-2018}. In particular, the proof is somewhat non-constructive: there is an algorithm to compute structure coefficients that lie in $\ZZ[q,q^{-1}]$. However, since the structure coefficients must be an integer when $q$ is specialized to any prime power, the structure coefficients must lie in $\ZZ[q]$. We can therefore make the following conjecture.

\begin{Conjecture}
  \label{conj:q-equals-zero-specialization}
Let $x,y \in W_{\cT}$. Then there exists a unique $x \star y \in W_{\cT}$ such that:
\begin{equation}
  \label{eq:d:22}
  T_x T_y \equiv (-1)^{\ell(x) + \ell(y)} T_{x \star y} \mod q
\end{equation}
\end{Conjecture}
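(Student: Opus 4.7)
The conjecture has two parts: a uniqueness of support statement (only one $z$ has $c_{x,y}^z \not\equiv 0 \mod q$), and a coefficient computation $(-1)^{\ell(x)+\ell(y)}$. My plan is to reduce to Conjecture \ref{conj:main-conjecture} and isolate the key step still needed. First, I would verify the two tractable subcases. In the length-additive case, assuming the length-additive part of Conjecture \ref{conj:main-conjecture} (i.e.\ $T_xT_y = T_{xy}$, Example \ref{example:lengthdeficit-0-Coxeter}), reducing mod $q$ gives $x\star y=xy$ immediately. In the length-deficit-two case, the conjectured formula $T_xT_y = (q-1)T_{xs_\beta y} + qT_{xy}$ reduces mod $q$ to $-T_{xs_\beta y}$, so $x\star y = xs_\beta y$. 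Both are consistent with the sign output of the sign representation Theorem \ref{thm:sign-representation}, which we use as a sanity check via $\sign(T_xT_y) = (-1)^{\ell(x)+\ell(y)}$.

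For the general case, the cleanest route is to derive Conjecture \ref{conj:q-equals-zero-specialization} as a formal consequence of Conjecture \ref{conj:main-conjecture} exactly as is done in the Coxeter setting. From part \eqref{thpart:degreeboundoncoeffs} of Theorem \ref{thm:expansion-properties-Coxeter} applied in the Kac--Moody affine case, a coefficient $c_{x,y}^z$ can survive the $q=0$ specialization only if its lowest-degree term is the constant, which forces $\ell(z) \geq \ell(xy) + N$ where $N = |\Inv(x)\cap\Inv(y^{-1})|$. By Theorem \ref{thm:length-deficit-for-tits-cone-weyl-groups} this is $\tfrac{1}{2}(\ell(x)+\ell(y)+\ell(xy))$. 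Combined with a Bruhat upper bound analogous to Remark \ref{rmk:Coxeter-demazure-product} and the expected length identity $\ell(x\star y)=\ell(xy)+N$, only one $z$ satisfies both constraints. The surviving coefficient is then $\pm 1$ with sign dictated by the sign of the lowest-degree term in part \eqref{thpart:degreeboundoncoeffs}, which matches $(-1)^{\ell(x)+\ell(y)}$ after applying $\sign$.

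The principal obstacle, as already flagged in the paper, is twofold: we must (i) construct a candidate Demazure element $x\star y\in W_{\cT}$ with $\ell(x\star y)=\ell(xy)+N$, and (ii) prove a Bruhat upper bound on the support of $T_xT_y$. For (i) there are two strategies. The first is to adapt Schremmer's quantum-Bruhat-graph construction from the affine Hecke case to the Kac--Moody affine setting, as in Dean's forthcoming work. The second is to define $x\star y$ recursively via the Bernstein presentation and the BKP algorithm (\S\ref{sssect:BKPalgorithm}), tracking the unique surviving term at $q=0$ through each Bernstein relation application; here the Bernstein relation at $q=0$ becomes $T_i\Theta^{\mu} \equiv \Theta^{s_i(\mu)}T_i + (\Theta^\mu - \Theta^{s_i(\mu)})/(1 - \Theta^{-\alpha_i})$, reminiscent of a Demazure-operator recursion, which may be exploited. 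Proving well-definedness of the recursive candidate (independence of the word chosen for the Weyl part) is a non-trivial coherence statement.

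The hard part will be (ii), the Bruhat upper bound. In the Coxeter case this follows from reduced-word manipulation (each braid move preserves the upper bound), but $W_{\cT}$ has no reduced words, so a genuinely new argument is required. I would attempt to induct along the BKP-algorithm recursion on $\#\cS(\mu)$, propagating a Bruhat-upper-bound invariant through each step of \eqref{eq:6} and through each generalized Iwahori--Matsumoto relation \eqref{eq:Ts_onright}--\eqref{eq:Ts_onleft}. Even granting this, one must rule out cancellation at $q=0$ among several candidate top-level terms; this is precisely where the Coxeter proof uses the positivity-of-leading-coefficient clause of part \eqref{thpart:degreeboundoncoeffs}. Upgrading that positivity statement to the Kac--Moody affine case (which is part of Conjecture \ref{conj:main-conjecture} itself) therefore appears to be the bottleneck, and I expect that progress on Conjecture \ref{conj:q-equals-zero-specialization} requires genuine new Coxeter-like combinatorics of $W_{\cT}$ rather than a shortcut that bypasses Conjecture \ref{conj:main-conjecture}.
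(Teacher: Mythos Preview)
The statement you are attempting is a \emph{Conjecture} in the paper, not a theorem: the paper offers no proof, only supporting examples (\S\ref{sec:conjecture-in-the-kac-moody-affine-setting}) and a pointer to Schremmer's work and Dean's forthcoming generalization. Your submission is, correspondingly, not a proof but a research outline, and you are candid about this (``The principal obstacle\ldots'', ``The hard part will be\ldots'', ``requires genuine new Coxeter-like combinatorics''). So there is no proof to compare against; the honest verdict is that neither you nor the paper proves the statement.

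That said, your analysis of the logical dependencies is accurate and matches the paper's own framing. You correctly observe that the length-deficit $0$ and $2$ cases follow from the corresponding parts of Conjecture~\ref{conj:main-conjecture} (this is exactly Conjectures~\ref{conj:demprod-lengthadd} and~\ref{conj:demprod-lengthdef2}), and that the general case would follow from the full strength of Conjecture~\ref{conj:main-conjecture} together with a Bruhat upper bound (the Kac--Moody affine analogue of Remark~\ref{rmk:Coxeter-demazure-product}). You also correctly identify the two missing ingredients---a candidate element $x\star y$ with the right length, and a support upper bound---and the absence of reduced words as the structural reason the Coxeter argument does not transfer.

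One small caution on your sign bookkeeping: the sign $(-1)^{\ell(x)+\ell(y)}$ as stated does not survive the sanity check $x=y=s$ a simple reflection (where $T_s^2\equiv -T_s \bmod q$ but $(-1)^{\ell(s)+\ell(s)}=+1$); the exponent should really involve $\ell(x\star y)$ as well. This is a cosmetic issue with the conjecture's formulation rather than with your strategy, but your claim that part~\eqref{thpart:degreeboundoncoeffs} ``matches $(-1)^{\ell(x)+\ell(y)}$'' inherits the same discrepancy, so be careful if you pursue this further.
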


We note that all the examples computed in Section \ref{sec:conjecture-in-the-kac-moody-affine-setting} above align with this conjecture. A positive answer to this conjecture would give a definition of the \emph{Kac-Moody affine Demazure product}. We mention that another approach is to generalize the work of Schremmer \cite{Schremmer} to the Kac-Moody affine setting. 

\subsection{Length deficit $0$ and $2$}

Conjecture \ref{conj:q-equals-zero-specialization} along with conjecture \ref{conj:main-conjecture} imply the following corollaries.

\begin{Conjecture}\label{conj:demprod-lengthadd}
  If $x,y \in W_{\cT}$, and $\ell(xy) = \ell(x) + \ell(y)$, then:
  \begin{equation}
    \label{eq:d:24}
    x \star y = xy
  \end{equation}
\end{Conjecture}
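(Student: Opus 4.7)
The plan is to reduce this statement to a short consequence of the two conjectures that immediately precede it in Section \ref{sect:Demazure-product}.

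First, since $\ell(xy) = \ell(x) + \ell(y)$, Theorem \ref{thm:length-deficit-for-tits-cone-weyl-groups} forces $|\Inv(x) \cap \Inv(y^{-1})| = 0$. Setting $N = 0$ in the Kac-Moody affine version of Theorem \ref{thm:expansion-properties-Coxeter} asserted by Conjecture \ref{conj:main-conjecture}, parts \eqref{thpart:lowerboundnumterms-coxeterexpansionthm} and \eqref{thpart:upperboundnumterms-coxeterexpansionthm} pin the support of $T_x T_y$ between $N+1 = 1$ and $2^N = 1$ elements, with the unique surviving coefficient equal to $c^{xy}_{x,y} = q^N = 1$. This recovers the length-additive identity of Example \ref{example:lengthdeficit-0-Coxeter} in the Kac-Moody affine setting, namely
\[
T_x T_y = T_{xy}
\]
as an exact equality in $\cH$, not merely modulo $q$.

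Next, reducing modulo $q$ displays $T_x T_y \equiv T_{xy} \pmod{q}$ as a single $T$-basis element. Conjecture \ref{conj:q-equals-zero-specialization} characterizes $x \star y$ as the unique element of $W_\cT$ for which $T_x T_y \equiv (-1)^{\ell(x)+\ell(y)} T_{x \star y} \pmod{q}$, and since the images of $T$-basis elements remain linearly independent in the $q = 0$ quotient, this uniqueness forces $x \star y = xy$. The sign is consistent because $\ell(x) + \ell(y) = \ell(xy)$ in the length-additive case, so the prefactor $(-1)^{\ell(x)+\ell(y)}$ appearing in Conjecture \ref{conj:q-equals-zero-specialization} is compatible with the unsigned equality $T_x T_y = T_{xy}$ provided by Conjecture \ref{conj:main-conjecture}.

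The main obstacle here is not in this deduction, which is essentially formal once both conjectures are in hand, but in securing the two inputs themselves. In particular, the length-additive identity $T_x T_y = T_{xy}$ --- which in the Coxeter setting is just the braid relation \eqref{eq:length-additive-intro} --- is not a priori available in the Kac-Moody affine setting, and isolating the Coxeter-type combinatorics needed to prove it is precisely the ``richer theory'' that the introduction highlights as the real challenge.
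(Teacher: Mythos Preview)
Your deduction matches the paper's approach exactly: the paper does not give a separate argument for Conjecture \ref{conj:demprod-lengthadd} but simply records it (in the sentence preceding the statement) as a consequence of Conjectures \ref{conj:main-conjecture} and \ref{conj:q-equals-zero-specialization}, and you have filled in that implication. The only wobble is your closing remark on signs: the equality $\ell(x)+\ell(y)=\ell(xy)$ does not on its own guarantee $(-1)^{\ell(x)+\ell(y)}=1$, so the stated reason for ``sign consistency'' is not right; but since you are granted both conjectures, comparing $T_xT_y\equiv T_{xy}$ with $T_xT_y\equiv(-1)^{\ell(x)+\ell(y)}T_{x\star y}$ and using linear independence of the $T$-basis modulo $q$ already forces $x\star y=xy$, so the conclusion is unaffected.
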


\begin{Conjecture}\label{conj:demprod-lengthdef2}
  If $x,y \in W_{\cT}$, and $\ell(xy) = \ell(x) + \ell(y)-2$, then by Theorem
\ref{thm:length-deficit-for-tits-cone-weyl-groups}, we have $\Inv(x) \cap \Inv(y^{-1}) = \{\beta[n]\}$ for a single Kac-Moody affine root $\beta[n]$. In this case, we have
  \begin{equation}
    \label{eq:d:24}
    x \star y = xs_{\beta[n]} y
  \end{equation}
\end{Conjecture}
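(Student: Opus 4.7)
The plan is to obtain this conjecture as an immediate consequence of Conjectures \ref{conj:main-conjecture} and \ref{conj:q-equals-zero-specialization}. Given $x, y \in W_\cT$ with $\ell(xy) = \ell(x) + \ell(y) - 2$, Theorem \ref{thm:length-deficit-for-tits-cone-weyl-groups} guarantees that $\Inv(x) \cap \Inv(y^{-1}) = \{\beta[n]\}$ is a single affine root.

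First, I would invoke Conjecture \ref{conj:main-conjecture}, which extends the length-deficit-two formula \eqref{eq:length-deficit-two-T-basis-product} to the Kac-Moody affine setting:
\begin{equation*}
  T_x T_y \;=\; (q-1)\, T_{x s_{\beta[n]} y} \;+\; q\, T_{xy}
\end{equation*}
in $\cH$. Reducing modulo $q$ gives $T_x T_y \equiv -\, T_{x s_{\beta[n]} y} \pmod{q}$. On the other hand, by Conjecture \ref{conj:q-equals-zero-specialization} there exists a unique $x \star y \in W_\cT$ satisfying $T_x T_y \equiv (-1)^{\ell(x)+\ell(y)} T_{x \star y} \pmod{q}$. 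Comparing the two expressions and using linear independence of the $T$-basis at $q=0$ (valid because the structure coefficients lie in $\ZZ[q]$), one concludes that $x \star y = x s_{\beta[n]} y$.

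The one subtlety in this derivation is the matching of signs: we need $(-1)^{\ell(x)+\ell(y)} = -1$, which since the deficit is $2$ is equivalent to $\ell(xy)$ being odd. Because $\ell$ takes values in $\tfrac{1}{N}\ZZ$ in the Kac-Moody affine setting, this is not automatic; one would verify it by a direct parity analysis via the length formula \eqref{eq:def-length-doubleaffine} and the compatibility \eqref{eq:km-bruhat-lenght-ineq}, or alternatively reformulate Conjecture \ref{conj:q-equals-zero-specialization} with the manifestly correct sign $(-1)^{\ell(x)+\ell(y)-\ell(x\star y)}$ so that agreement with the length-deficit-two formula is built in.

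The genuine obstacles, however, lie upstream: establishing Conjectures \ref{conj:main-conjecture} and \ref{conj:q-equals-zero-specialization} themselves appears to require a richer Coxeter-type theory for $\WTits$ than is currently available, as discussed in Section \ref{subsect:intro-lengthdefandstructurecoeff}. Given those inputs, the derivation above is essentially a two-line calculation, and the hardest part of this particular statement is simply recognising that the single affine root $\beta[n]$ furnished by Theorem \ref{thm:length-deficit-for-tits-cone-weyl-groups} is exactly the one that determines the Demazure product.
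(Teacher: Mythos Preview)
Your derivation is exactly the paper's intended one: the paper states only that Conjectures~\ref{conj:q-equals-zero-specialization} and~\ref{conj:main-conjecture} ``imply the following corollaries,'' and you have spelled out that implication explicitly by specializing the length-deficit-two formula \eqref{eq:length-deficit-two-T-basis-product} at $q=0$ and comparing with \eqref{eq:d:22}.

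Your remark about the sign is not a defect in your argument but a genuine observation about the paper. Already in the Coxeter case $x=y=s$ one has $T_s^2\equiv -T_s\pmod q$ while $(-1)^{\ell(s)+\ell(s)}=+1$, so the sign in \eqref{eq:classical-demazure-product-and-q-equal-zero} and \eqref{eq:d:22} should really be $(-1)^{\ell(x)+\ell(y)-\ell(x\star y)}$ (equivalently, one works with the idempotents $e_w=(-1)^{\ell(w)}T_w$). With that correction the comparison in your argument becomes $-T_{xs_{\beta[n]}y}=(-1)^{\ell(x)+\ell(y)-\ell(x\star y)}T_{x\star y}$, which forces $x\star y=xs_{\beta[n]}y$ and $\ell(x\star y)\equiv \ell(x)+\ell(y)-1\pmod 2$; no separate parity analysis of $\ell$ is needed. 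So your instinct to reformulate the sign was the right one.
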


\subsection{Bruhat order and Demazure product}

In the case where $W$ is a Coxeter Weyl group, then there is a close relationship between the Bruhat order and the Demazure product. For each $x \in W$, define the \emph{down set} of $x$ by:
\begin{equation}
  \label{eq:d:25}
  \downarrow\!{(x)} = \left\{ z \in W \suchthat z \leq x \right\}
\end{equation}
Then it is known that \cite[Proposition 8.]{kenney2014coxeter}:
\begin{equation}
  \label{eq:d:26}
  \downarrow\!{(x)} \cdot \downarrow\!{(y)} = \downarrow\!{(x \star y)} 
\end{equation}
Note on the left, $\downarrow\!{(x)} \cdot \downarrow\!{(y)}$ denotes the set of all products in the usual group multiplication on $W$. In particular, this shows that the Demazure product is characterized by the Bruhat order and the group multiplication on $W$. Both of these are available in the Kac-Moody affine case, so that leads us to ask the following question.

\begin{Question}
Does equation \eqref{eq:d:26} hold in the Kac-Moody affine case?
\end{Question}

In the Kac-Moody affine case, we currently do not have a good way to understand down sets, and we cannot verify \eqref{eq:d:26} in any particular example. This is our reason for framing this as a question rather than a conjecture.


\bibliographystyle{amsalpha}
\bibliography{references}

\end{document}